\crefname{theorem}{Theorem}{Theorems}
\crefname{thm}{Theorem}{Theorems}
\crefname{mainthm}{Theorem}{Theorems}
\crefname{conj}{Conjecture}{Theorems}
\crefname{lemma}{Lemma}{Lemmas}
\crefname{lem}{Lemma}{Lemmas}
\crefname{remark}{Remark}{Remarks}
\crefname{prop}{Proposition}{Propositions}
\crefname{defn}{Definition}{Definitions}
\crefname{corollary}{Corollary}{Corollaries}
\crefname{cor}{Corollary}{Corollaries}
\crefname{section}{Section}{Sections}
\crefname{figure}{Figure}{Figures}
\crefname{quest}{Question}{Questions}
\newcommand{\N}{\mathbb{N}}
\newcommand{\Z}{\mathbb{Z}}
\newcommand{\R}{\mathbb{R}}
\newcommand{\B}{\mathcal{B}}
\newcommand{\kay}{\mathcal{K}}
\newcommand{\TT}{\widetilde{T}}
\newcommand{\TS}{\widetilde{S}}
\newcommand{\Tmu}{\widetilde{\mu}}
\newcommand{\ignore}[1]{}
\newtheorem{thm}{Theorem}[section]
\newtheorem{lemma}[thm]{Lemma}
\newtheorem{prop}[thm]{Proposition}
\newtheorem{cor}[thm]{Corollary}
\newtheorem{fact}[thm]{Fact}
\theoremstyle{definition}
\newtheorem*{thm*}{Theorem}
\newtheorem*{prop*}{Proposition}
\newtheorem*{Prob*}{Problem}
\newif\ifdraft\drafttrue
\begin{document}

\title[Counterexamples to double recurrence]{Counterexamples to double recurrence for non-commuting deterministic transformations}

\author{Zemer Kosloff}
\address{Zemer Kosloff, School of Mathematics, University of Bristol, Fry Building, Woodland Road, Bristol, BS8 1UG, United Kingdom}
\email{zemer.kosloff@bristol.ac.uk}

\address{Zemer Kosloff, Einstein Institute of Mathematics, Hebrew University of Jerusalem, Edmond J. Safra Campus, Jerusalem, 9190401, Israel.}
\email{zemer.kosloff@mail.huji.ac.il}

\author{Shrey Sanadhya}
\address{Shrey Sanadhya, Einstein Institute of Mathematics, Hebrew University of Jerusalem, Edmond J. Safra Campus, Jerusalem, 9190401, Israel.}
\email{shrey.sanadhya@mail.huji.ac.il}

\subjclass[2020]{28D05, 37A05, 37A50, 37A30, 60F05, 60G10, 60G15}
\keywords{Local central limit theorem; Multiple recurrence; Zero entropy stationary process}

\begin{abstract}
We show that if $p_1,p_2$ are injective, integer polynomials that vanish at the origin, such that either both are of degree $1$ or both are of degree $2$ or higher, then double recurrence fails for non-commuting, mixing, zero entropy transformations. This answers a question of Frantzikinakis and Host completely.
\end{abstract}

\maketitle

\section{Introduction} Furstenberg and Katznelson \cite{Furstenberg_Katznelson_1978}  obtained a multidimensional extension of  Szemerédi theorem by proving the following multiple recurrence result: Let $T_1,\ldots,T_k$ be commuting measure preserving transformations of a probability space $(X,\B,\mu)$, then for every $B \in \B$ with $\mu(B) > 0$, there exists $n\in \N$, such that
\[
\mu \left(A\cap T_1^{-n}B\cap T_2^{-n}B \cap \ldots\cap T_k^{-n}B\right) > 0.
\] 
Since then, the extension of this multiple recurrence property has been central in ergodic Ramsey theory and has had numerous applications in establishing beautiful results in infinite Ramsey theory. A  notable extension was obtained by  Bergelson and Leibman \cite{Bergelson_Leibman_1996} where it was shown that for all $p_1,\ldots,p_k$ integer polynomials which vanish at the origin, and $T_1,\ldots,T_k$ commuting measure preserving transformations, the multiple recurrence property holds along the polynomial times. 

A natural question which arises, and is the main topic of this work, is to study the recurrence problem when the systems do not necessarily commute. In this direction, Bergelson and Leibman \cite{Bergelson_Leibman_2002} showed that double recurrence holds if the two transformations $T,S$ generate a nilpotent group. 
In the non-commutative setting Furstenberg \cite[Chapter 2, page 40]{Furstenberg_1981} constructed a simple example of double non-recurrence where both the transformations have positive entropy. Recently Frantzikinakis and Host \cite{Frantzikinakis_Host_2023} showed that the failure of multiple recurrence for non-commutative positive entropy systems (as indicated by Furstenberg's example above) holds in greater generality. They proved the following. 

\begin{thm*}\cite[Proposition 1.4.]{Frantzikinakis_Host_2023} Let $(X, \mu, T)$ be an ergodic system with positive entropy, $a, b : \N \rightarrow
\Z \setminus \{0\}$ be injective sequences that miss infinitely many integers, and $F$ be an arbitrary
subset of $\N$. Then there exist a system $(X, \mu, S)$, with positive entropy, a measurable set
$A$, and $c > 0$, such that

\[
\mu\left(A\cap T^{-a(n)}A\cap S^{-b(n)}A\right) =\begin{cases}
0 &\ \text{if}\,\, n \in F,\\
c &\ \text{if}\,\, n \notin F.
\end{cases}
\] As a consequence, there exist a transformation $S$ and a set $A$ with $\mu(A)>0$ such that $\mu\left(A\cap T^{-a(n)}A\cap S^{-b(n)}A\right) = 0$ for every $n \in \N$. 
\end{thm*}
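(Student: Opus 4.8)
The plan is to reduce, via Sinai's factor theorem, to the case of a Bernoulli shift, and then to build $S$ by an inductive construction in which the two ``missing integers'' hypotheses and the injectivity of $a,b$ serve as bookkeeping devices and the Bernoulli independence as a reservoir of randomness. Concretely: since $(X,\mu,T)$ is ergodic with positive entropy, it has, by Sinai's factor theorem, a two-sided Bernoulli shift $(Y,\nu,\sigma)$ on a suitable finite alphabet $\mathbb{A}$ as a factor $\pi\colon X\to Y$; applying Sinai's theorem again inside this factor, we may take $\mathbb{A}$ and the one-coordinate marginal $\rho$ of $\nu$ as convenient, subject only to $H(\rho)\le h(T)$. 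Since $T$ and $\sigma$ are both ergodic, the fibres of $\pi$ all have the same measure-isomorphism type, so $(X,\mu)\cong(Y\times Z,\nu\times\lambda)$ with $\pi$ the first projection; hence it is enough to produce on $(Y,\nu)$ an invertible positive-entropy $S_0$ and a cylinder $A_0=\{y:y_0\in\mathbb{A}'\}$ with, say, $0<\nu(A_0)<1/2$, for which $\nu\big(A_0\cap\sigma^{-a(n)}A_0\cap S_0^{-b(n)}A_0\big)$ equals $0$ for $n\in F$ and a fixed $c>0$ for $n\notin F$. Indeed $A:=\pi^{-1}(A_0)=A_0\times Z$ and $S:=S_0\times\mathrm{id}_Z$ then work: $h(S)=h(S_0)>0$, $T^{-a(n)}A=\pi^{-1}(\sigma^{-a(n)}A_0)$ and $S^{-b(n)}A=\pi^{-1}(S_0^{-b(n)}A_0)$, so the triple intersections upstairs have the same $\mu$-measure as the corresponding ones for $\sigma, S_0, A_0$.

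On $Y=\mathbb{A}^{\Z}$ put $E_n:=A_0\cap\sigma^{-a(n)}A_0=\{y:y_0\in\mathbb{A}',\ y_{a(n)}\in\mathbb{A}'\}$; since $a(n)\ne 0$, this is a product event, and $\nu(E_n)=\nu(A_0)^2<\nu(A_0^c)$ because $\nu(A_0)<1/2$ forces $\nu(A_0)^2<1/4<1-\nu(A_0)$. The requirement now reads: for $n\in F$ one wants $S_0^{b(n)}(E_n)\subseteq A_0^c$ (modulo $\nu$); for $n\notin F$ one wants the value $(S_0^{b(n)}y)_0$, restricted to $y\in E_n$, to depend only on the coordinates $(y_j)_{j\notin\{0,a(n)\}}$, so that $S_0^{-b(n)}A_0$ is independent of $E_n$ and the intersection equals $c:=\nu(A_0)^3$. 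The elementary gadget realising a constraint of the first type is a measure-preserving ``conditional modification of coordinate $0$ by coordinate $a(n)$'': in the uniform two-symbol case (available when $h(T)\ge\log 2$) it is the involution $y\mapsto y'$ with $y'_0=y_0\oplus y_{a(n)}\oplus 1$ (bitwise addition modulo $2$) and $y'_j=y_j$ for $j\ne 0$, which carries $E_n$ into $A_0^c$; for a general $\rho$ one uses instead an arbitrary measure-preserving injection of the small set $E_n$ into $A_0^c$, which exists precisely because $\nu(E_n)<\nu(A_0^c)$.

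The heart of the matter, and the step I expect to be the main obstacle, is to weld all these prescriptions --- one for each $n\in\N$, the ``hard'' ones for $n\in F$ and the ``uniform value $c$'' ones for $n\notin F$ --- into the powers of a \emph{single} invertible measure-preserving $S_0$: the demand that $(S_0^m)_{m\in\Z}$ be a genuine $\Z$-action is rigid, whereas we are legislating its behaviour at the infinitely many times $b(n)$. I would carry this out by an inductive, cutting-and-stacking-type construction of $S_0$ performed relative to the fixed data $(\sigma,\nu,A_0)$: obtain $S_0$ as a limit of periodic approximations and, at stage $k$, install the constraint for $n=k$ inside a long stretch of the current tower without disturbing the finitely many constraints already placed. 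This is exactly where the hypotheses enter: injectivity of $b$ (resp.\ of $a$) makes $b(k)$ a previously unlegislated time (resp.\ $a(k)$ a previously unreferenced coordinate), so the stage-$k$ gadget is new; that $a$ misses infinitely many integers provides an inexhaustible stock of coordinates occurring in no $E_n$, to be used as scratch tape; that $b$ misses infinitely many integers provides an inexhaustible stock of times at which $S_0$ is unconstrained, into which the stage-$k$ modification can be absorbed; and the Bernoulli independence furnished by $h(T)>0$ is what leaves room for each individual constraint and, at the same time, lets one common value $c=\nu(A_0)^3$ serve every $n\notin F$, and finally lets the limiting $S_0$ --- hence $S$ --- be arranged to have positive entropy. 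What remains is then the technical bookkeeping: that the periodic approximations converge to a bona fide invertible measure-preserving $S_0$, and that every constraint, once installed, is permanent.

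Finally, the displayed corollary in the statement is simply the case $F=\N$: it produces a transformation $S$ and a set $A$ with $\mu(A)=\nu(A_0)>0$ and $\mu\big(A\cap T^{-a(n)}A\cap S^{-b(n)}A\big)=0$ for every $n\in\N$.
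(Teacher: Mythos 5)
First, a point of order: the paper you are looking at does not prove this statement at all; it is quoted verbatim from Frantzikinakis and Host \cite{Frantzikinakis_Host_2023} as background, so there is no in-paper proof to compare against. Judged on its own terms, your proposal is an outline rather than a proof, and the missing part is precisely the content of the theorem. Your reduction to a Bernoulli factor is reasonable (the product splitting $(X,\mu)\cong (Y\times Z,\nu\times\lambda)$ over the factor map needs the Rokhlin disintegration plus the observation that ergodicity makes the fibre type a.e.\ constant --- ``all fibres are isomorphic'' by itself proves nothing, but this step is repairable). The genuine gap is the construction of $S_0$: you must produce a \emph{single} invertible measure-preserving map whose powers at the infinitely many prescribed times $b(n)$ satisfy \emph{exact} identities --- measure exactly $0$ for every $n\in F$ and exactly the same constant $c$ for every $n\notin F$ --- and in addition $h(S_0)>0$. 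You name this as ``the main obstacle'' and then defer it to ``an inductive, cutting-and-stacking-type construction'' whose convergence and permanence of constraints you explicitly leave as bookkeeping. That is not bookkeeping: limits of periodic approximations control quantities like $\nu\bigl(A_0\cap\sigma^{-a(n)}A_0\cap S_0^{-b(n)}A_0\bigr)$ only up to an error at each finite stage, and there is no mechanism in your sketch forcing the limiting values to be exactly $0$ (rather than merely small) for all $n\in F$, let alone exactly one common constant $c$ for all $n\notin F$; moreover, cutting-and-stacking limits of this kind tend, if anything, toward zero entropy, and your appeal to ``Bernoulli independence'' to secure $h(S_0)>0$ in the limit is not substantiated.

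The known proof, and the analogous constructions carried out in Sections 2 and 3 of this paper for the zero-entropy counterexamples, avoid any limiting procedure: $S$ is written down explicitly on (an extension of) a Bernoulli factor, essentially as a conjugate $S=\Psi^{-1}T\Psi$ of the shift by a concrete involution that flips or permutes coordinates in a way adapted to $F$ and to the sequences $a,b$ (compare the map $\phi$ in \Cref{sec:Proof_c=d=1} and the permutations $\pi_y$ in \Cref{sec:Proof_rec_1}). With such an explicit $S$, the triple intersection is computed exactly --- it is empty, or has measure exactly $\mu(A)^3$ --- by a pointwise coordinate argument, the common value of $c$ comes out automatically, and positive entropy is inherited from the Bernoulli structure rather than argued in a limit. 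To turn your plan into a proof you would have to replace the inductive approximation scheme by such an explicit algebraic definition of $S_0$ (or else supply a genuinely new argument showing that your staged construction stabilizes each constraint exactly and retains positive entropy), and that replacement is the whole difficulty.
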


On the other hand if one of the transformation has zero entropy, it was proved (in the same paper) that double recurrence holds if one of the iterates is an integer polynomial with no constant term. 

\begin{thm*}\cite[Theorem 1.3.]{Frantzikinakis_Host_2023} Let $T, S$ be measure preserving transformations acting on a probability
space $(X, \B, \mu)$ such that the system $(X, \mu, T)$ has zero entropy. Let $p \in \Z[t]$ be a polynomial such that $p(0)=0$ and $\deg (p) \geq 2$. Then for every $A \in \B$ and $\epsilon>0$ the set
\[
\left\{n\in\mathbb{N}:\ \mu\left(A\cap T^{-n}A\cap S^{-p(n)}A\right)> \mu(A)^3 - \epsilon\right\}
\]
has positive lower density. 
\end{thm*}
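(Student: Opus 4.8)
The plan is to reduce the theorem to a lower bound on Cesàro averages, peel off a main term equal to $\mu(A)^3$, and then concentrate on the remaining error term, which is where the zero-entropy hypothesis and the assumption $\deg p\ge 2$ are genuinely needed.

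\smallskip

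\noindent\emph{Step 1: reduction to an averaged estimate.} We may assume $\mu(A)>0$ and $\epsilon<\mu(A)^3$, since otherwise the assertion is trivial. Put $a_n=\mu(A\cap T^{-n}A\cap S^{-p(n)}A)\in[0,\mu(A)]$. Splitting $\frac1N\sum_{n=1}^N a_n$ according to whether $a_n>\mu(A)^3-\epsilon$ or not yields, for every $N$,
\[
\frac1N\sum_{n=1}^N a_n\ \le\ \mu(A)\cdot\frac1N\bigl|\{1\le n\le N:\ a_n>\mu(A)^3-\epsilon\}\bigr|\ +\ \bigl(\mu(A)^3-\epsilon\bigr).
\]
Hence, if $\liminf_{N\to\infty}\frac1N\sum_{n=1}^N a_n\ge\mu(A)^3$, then the set in the theorem has lower density at least $\epsilon/\mu(A)>0$. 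So it suffices to prove $\liminf_{N\to\infty}\frac1N\sum_{n=1}^N\mu(A\cap T^{-n}A\cap S^{-p(n)}A)\ge\mu(A)^3$.

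\smallskip

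\noindent\emph{Step 2: the main term.} Write $f=\ind_A$ and $f_0=f-\mu(A)$, so $\int_X f_0\,d\mu=0$, and substitute $f\circ S^{p(n)}=\mu(A)+f_0\circ S^{p(n)}$:
\[
\mu(A\cap T^{-n}A\cap S^{-p(n)}A)=\mu(A)\,\mu(A\cap T^{-n}A)+\int_X f\cdot(f\circ T^n)\cdot(f_0\circ S^{p(n)})\,d\mu .
\]
By the mean ergodic theorem $\frac1N\sum_{n=1}^N f\circ T^n\to\mathbb{E}[f\mid\mathcal I_T]$ in $L^2(\mu)$, where $\mathcal I_T$ is the $\sigma$-algebra of $T$-invariant sets; hence $\frac1N\sum_n\mu(A\cap T^{-n}A)\to\|\mathbb{E}[f\mid\mathcal I_T]\|_2^2\ge\mu(A)^2$ by the Cauchy--Schwarz inequality, so $\mu(A)\cdot\frac1N\sum_n\mu(A\cap T^{-n}A)\to\mu(A)\,\|\mathbb{E}[f\mid\mathcal I_T]\|_2^2\ge\mu(A)^3$. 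Thus the proof reduces to the estimate
\begin{equation}
\liminf_{N\to\infty}\ \frac1N\sum_{n=1}^N\int_X f\cdot(f\circ T^n)\cdot(f_0\circ S^{p(n)})\,d\mu\ \ge\ 0. \tag{$\star$}
\end{equation}
Note one cannot hope to replace this by a limit equal to $0$: already for $T=\mathrm{id}$ the integral equals $\mu(A\cap S^{-p(n)}A)-\mu(A)^2$, whose Cesàro average is $\lim_N\frac1N\sum_n\mu(A\cap S^{-p(n)}A)-\mu(A)^2$, nonnegative but in general strictly positive. So $(\star)$ must be established while keeping track of signs.

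\smallskip

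\noindent\emph{Step 3: the estimate $(\star)$ --- the heart of the matter.} I would attack $(\star)$ with a van der Corput estimate followed by a PET-type induction on the degree/complexity of the polynomial data, in order to reduce it to statements about low-complexity structured factors of the systems $(X,\mu,T)$ and $(X,\mu,S)$. The assumption $\deg p\ge2$ is used through the fact that $n\mapsto p(n+h)-p(n)$ is nonconstant for every $h\ne0$, so that after the reduction the $S^{p(n)}$-orbits equidistribute finely enough (Weyl-type equidistribution) that the only surviving contributions are ``resonant'' ones, carried by a rational-spectrum (Kronecker/profinite) factor of $S$. The zero-entropy hypothesis on $T$ is used because such a system is deterministic --- for instance, the number of $\{A,A^c\}$-names $(\ind_A(T^nx))_{0\le n<N}$ needed to cover all but $\epsilon$ of $X$ grows subexponentially in $N$ --- and, more to the point for the van der Corput/PET machinery, the iterated difference functions $x\mapsto\prod_j\ind_A(T^{n+h_j}x)$ that arise are governed by the compact (Kronecker/nil-) structure of $T$ rather than introducing genuinely new randomness; this is exactly where positive entropy would break the argument, consistently with the positive-entropy counterexample recalled above in \cite[Proposition 1.4]{Frantzikinakis_Host_2023}. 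Finally --- and this is what makes the sign in $(\star)$ come out right --- each surviving resonant term can be rewritten as $c\cdot\bigl(\mu(B\cap R^{-q(n)}B)\text{-type average}\bigr)$ with $c\ge 0$, hence is nonnegative by the classical sharp lower bound $\liminf_N\frac1N\sum_n\mu(B\cap R^{-q(n)}B)\ge\mu(B)^2$ (valid for any measure-preserving $R$, any $B$, and any integer polynomial $q$ with $q(0)=0$; Furstenberg, Sárközy), while the non-resonant terms tend to $0$; summing gives $(\star)$.

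\smallskip

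\noindent\emph{Main obstacle.} I expect $(\star)$, and within it the use of the zero-entropy hypothesis, to be the genuine difficulty. The issue is structural: since $T$ and $S$ act on the same space and need not commute, one cannot pass to a product system and run the commuting-case characteristic-factor argument; the van der Corput/PET reduction must stay on $X$, and one needs a correct notion of ``characteristic factor for the $T$-variable in a non-commuting triple average'', controlled via zero entropy, together with careful bookkeeping of signs so that the nonnegativity in $(\star)$ survives. Once $(\star)$ is in place, Steps 1 and 2 are routine. (For $\deg p=1$ the differences $p(n+h)-p(n)=h$ are constant, the $S$-orbits do not spread out, and recurrence genuinely fails --- the regime of the counterexamples constructed in the present paper.)
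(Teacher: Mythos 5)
This statement is not proved in the paper you are reading: it is quoted verbatim from Frantzikinakis--Host \cite{Frantzikinakis_Host_2023} as background, so there is no in-paper argument to compare yours against; the actual proof there goes through their $L^2$ convergence theorem (\cite[Theorem 1.1]{Frantzikinakis_Host_2023}), which identifies the limit of the averages $\frac1N\sum_{n\le N} f\circ T^n\cdot g\circ S^{p(n)}$ and shows that the $\sigma$-algebra $\mathcal{I}_T$ of $T$-invariant sets is characteristic for the $T$-variable; the recurrence bound $\mu(A)^3$ is then deduced from that limit formula. Your Steps 1 and 2 are correct and routine, and they do match the standard reduction: it suffices to prove $\liminf_N \frac1N\sum_{n\le N}\mu(A\cap T^{-n}A\cap S^{-p(n)}A)\ge\mu(A)^3$, equivalently your inequality $(\star)$.

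The genuine gap is that Step 3, which is the entire content of the theorem, is not a proof but a conditional plan, and the plan as described would not work. First, the van der Corput/PET machinery you invoke has no known non-commutative version: differencing $u_n=f\circ T^n\cdot f_0\circ S^{p(n)}$ produces correlations $\int f\circ T^{n+h}\,f\circ T^n\,f_0\circ S^{p(n+h)}\,f_0\circ S^{p(n)}\,d\mu$ involving both maps simultaneously, and without commutativity (or nilpotency, as in Bergelson--Leibman) there is no joining or product-system device to run the induction; you name this as the ``main obstacle'' but offer no way around it, and overcoming it is precisely what \cite{Frantzikinakis_Host_2023} does, by a different mechanism (analysis of Furstenberg systems of the polynomial sequence and disjointness from zero-entropy systems). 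Second, your stated use of the zero-entropy hypothesis --- that the iterated difference functions are ``governed by the compact (Kronecker/nil-) structure of $T$'' --- is false as a statement about zero-entropy systems: the very examples used in this paper (mixing rank-one maps, the time-one horocycle map) have zero entropy, are mixing, and have trivial Kronecker and nilfactors, so zero entropy cannot be cashed in as compact structure. Third, the sign argument for $(\star)$ is unsupported: you assert that every surviving ``resonant'' term is a nonnegative multiple of a single-transformation average $\frac1N\sum_n\mu(B\cap R^{-q(n)}B)$, but no such decomposition is exhibited, and the individual correlation terms $\int f\cdot f\circ T^n\cdot f_0\circ S^{p(n)}\,d\mu$ carry no a priori sign; without an actual identification of the limit (as in FH's Theorem 1.1) the inequality $(\star)$ is not established. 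So the proposal reduces the theorem correctly to $(\star)$ but leaves the real difficulty untouched, and the heuristics offered for it are in part incorrect.
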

We should mention that the main result of \cite{Frantzikinakis_Host_2023} is a $L^2$ convergence result for multiple ergodic averages  in the non-commuting, zero entropy setting, where one of the iterates is given by an integer polynomial (c.f. \cite[Theorem 1.1.]{Frantzikinakis_Host_2023}). The authors (\cite[Problem, page 1639]{Frantzikinakis_Host_2023}) asked whether convergence of double averages (and recurrence) holds (in this setting), if the pair of iterates $n,p(n)$ are replaced by the pair $n,n$ or by the pair $n^2,n^3$. They also asked if in general, the result hold for pairs of iterates given by arbitrary integer polynomials $p_1, p_2 \in \Z(t)$ with $p_1(0) = p_2(0) = 0$. Recently counterexamples to multiple averages for zero entropy, non commuting transformations, were constructed in \cite{Huang_Shao_Ye_2024_A}, \cite{Austin_2025}, \cite{Huang_Shao_Ye_2024_B} and \cite{kosloff_Sanadhya_2024}. Similar results were also studied in \cite{Ryzhikov_2024_A}. In this work we prove that if $T$ has zero entropy then the only case where double recurrence holds in general is \cite[Theorem 1.3.]{Frantzikinakis_Host_2023} mentioned above. 

 In a previous work \cite[Theorem 1.4.]{kosloff_Sanadhya_2024}, we showed that when the integer polynomials $p_1,p_2 \in \Z(t)$ are of a special type (high degree and large leading coefficient), then double recurrence in the non-commuting and zero entropy setting fails. The following result shows that the same is true when the degree of both polynomials is at least $2$.  

 \begin{thm}\label{thm:recurrence_1} Let $p_1,p_2:\Z\to\Z$ be injective polynomials with $p_1(0) = p_2(0) = 0$ and $\deg (p_1),\deg (p_2) \geq 2$. There exist $T,S$, two mixing, measure preserving transformations of a standard probability space $(X,\B,\mu)$, with $h_{\mu}(X,T) = h_{\mu}(X,S) = 0$ and $A\in\B$ with $\mu(A)>0$, such that for all $n\in\N$,
\[
A\cap T^{-p_1(n)}A\cap S^{-p_2(n)}A = \emptyset.
\]
\end{thm}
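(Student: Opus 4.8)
The plan is to imitate the Frantzikinakis--Host counterexample in the positive-entropy regime (their Proposition 1.4), where the failure of double recurrence along $(a(n),b(n))$ for non-commuting $T,S$ is produced by exploiting the independence inside a positive-entropy system in order to ``program'', for each $n$, a forbidden configuration at the orbit position $a(n)$. Since here $T$ and $S$ are required to have zero entropy, that independence is unavailable and must be replaced by a deterministic surrogate, and the surrogate is a zero-entropy stationary process whose partial sums satisfy a \emph{local} central limit theorem. Concretely, I would start from a mixing, zero-entropy probability-preserving system $(\Omega,\mathbb{P},R)$ together with an integer-valued observable $\phi$ whose Birkhoff sums $\phi_N=\sum_{0\le j<N}\phi\circ R^j$ obey $\sup_{k\in\Z}\big|\,\sqrt N\,\mathbb{P}(\phi_N=k)-g_{\sigma^2 N}(k)\,\big|\to 0$ for the Gaussian density $g$, and with $\phi$ arithmetically non-degenerate so that $\phi_N$ equidistributes modulo every fixed integer; such ``deterministic random walks'' are available and underlie the authors' earlier work \cite{kosloff_Sanadhya_2024}. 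The role of the local limit theorem is that, viewed from the outside, the finite-window law of $(\phi\circ R^j)_j$ is close enough to an i.i.d.\ law to support the same constructions that positive entropy permits, while the entropy stays zero.

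Using $(\Omega,R,\phi)$ I would build the phase space $X$ as an extension of $\Omega$ --- a compact-group extension (by $\mathbb{T}$ or by $\Z/M$) through the cocycle $\phi$, possibly together with a finite-state factor carrying a bounded amount of combinatorial bookkeeping --- and define $T$ and $S$ as two \emph{non-commuting} lifts of (a power of) $R$: $T$ twists the fibre by $\phi$, while $S$ twists it by a modified cocycle, the modification being chosen both to make $TS\ne ST$ (which is essential, by Bergelson--Leibman in the commuting case) and so that a cylinder set $A$ in the fibre and bookkeeping coordinates satisfies $A\cap T^{-p_1(n)}A\cap S^{-p_2(n)}A=\emptyset$ for every $n\ge 1$. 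The emptiness, which is a genuinely combinatorial rather than merely measure-theoretic statement, is to come from the requirement that membership of $x$, of $T^{p_1(n)}x$ and of $S^{p_2(n)}x$ in $A$ pins down the cocycle sums $\phi_{p_1(n)}(\cdot)$ and $\phi_{p_2(n)}(\cdot)$ in a mutually contradictory way once the bookkeeping coordinates are read at the appropriate scales, reinforced by a hard exclusion built into the subshift factor. Here the hypothesis $\deg p_1,\deg p_2\ge 2$ enters decisively: the sequences $\{p_1(n)\}$, $\{p_2(n)\}$ and $\{p_2(n)-p_1(n)\}$ grow super-linearly with increasing gaps, so the windows carrying the per-$n$ exclusions do not overlap and the infinitely many constraints are simultaneously realisable on a non-empty $X$. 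This is also why the statement is phrased as it is: if one of $p_1,p_2$ were linear and the other of degree $\ge 2$, the two scales would be incomparable and the methods of \cite{Frantzikinakis_Host_2023} (their Theorem 1.3) would make double recurrence \emph{hold}, so a counterexample can exist only when both polynomials have degree $1$ or both have degree $\ge 2$.

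It then remains to equip $X$ with a $T$- and $S$-invariant probability measure $\mu$ --- namely $\mathbb{P}$ coupled with Haar measure on the fibre and carried onto the subshift factor, with $\mu(A)>0$ --- and to verify the two dynamical properties. Zero entropy is the easy half: $(X,\mu,T)$ and $(X,\mu,S)$ are isometric (compact-group, resp.\ finite-state) extensions of the zero-entropy base $(\Omega,\mathbb{P},R)$, so by the Abramov--Rokhlin formula they add no entropy and $h_\mu(X,T)=h_\mu(X,S)=0$. Mixing is the substantive half: expanding an $L^2(X)$ function in its Fourier series along the fibre, mixing of $T$ reduces to showing, for every non-trivial frequency $\theta$ and all $h_1,h_2$ on the base,
\[
\int_\Omega h_1(R^N\omega)\,\overline{h_2(\omega)}\;e^{2\pi i\,\theta\,\phi_N(\omega)}\,d\mathbb{P}(\omega)\ \xrightarrow[N\to\infty]{}\ 0,
\]
the $\theta=0$ term being handled by mixing of $R$. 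This oscillatory estimate --- decay of the characteristic function of $\phi_N$ at non-integer $\theta$, uniformly against a mixing weight on the base --- is exactly the content of the (base-coupled) local limit theorem: the bound $\mathbb{P}(\phi_N=k)=O(N^{-1/2})$ together with the Gaussian profile kills the Fourier mass at $\theta\notin\Z$, and the arithmetic non-degeneracy of $\phi$ does the same for the finite-group frequencies. Running the same estimate along the subsequences $N=p_1(n)$ and $N=p_2(n)$ yields mixing of $T$ and of $S$ outright.

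The main obstacle is the head-on conflict between the two goals: mixing demands that the orbit look independent and that no non-trivial factor be rigid, whereas the emptiness of $A\cap T^{-p_1(n)}A\cap S^{-p_2(n)}A$ demands a perfectly rigid arithmetic relation among $x$, $T^{p_1(n)}x$ and $S^{p_2(n)}x$. The construction must therefore spread the obstruction for the $n$-th intersection over a window whose location tends to infinity with $n$ --- a fixed location would be a rigid factor and destroy mixing --- then arrange that these moving windows are simultaneously compatible, which is the delicate bookkeeping where the super-linear growth and the gap structure of $\{p_1(n)\}$, $\{p_2(n)\}$ and $\{p_2(n)-p_1(n)\}$ are used, and at the same time make the entire scheme invisible to the mixing estimate, i.e.\ absorbed into the $o(N^{-1/2})$ error term of the local limit theorem. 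Choosing the deterministic walk $(\Omega,R,\phi)$ with a quantitatively strong enough local limit theorem to outrun the growth of $p_1$ and $p_2$, while keeping the entropy zero and keeping the $S$-cocycle genuinely non-commuting with the $T$-cocycle, is the heart of the matter.
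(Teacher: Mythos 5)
Your proposal reproduces the general framework of the paper (a mixing, zero--entropy base carrying a ``deterministic random walk'' with a local CLT, two non-commuting extensions of it, Abramov--Rokhlin for zero entropy), but the two steps that actually carry the theorem are left unconstructed, and where you are specific the argument does not work. First, the exclusion mechanism is never built: you assert that bookkeeping coordinates plus a ``hard exclusion built into the subshift factor'' will force the memberships of $x$, $T^{p_1(n)}x$, $S^{p_2(n)}x$ in $A$ to be contradictory, and you attribute the role of $\deg p_1,\deg p_2\ge 2$ to super-linear growth making the per-$n$ windows disjoint. That is not how a workable construction goes. In the paper the fibre is the Bernoulli system $\{0,1\}^{\Z^2}$, $\TT$ is the skew product by a $\Z^2$-valued cocycle $f$ satisfying a two-dimensional LCLT, and $\TS=\widetilde R^{-1}\circ\TT\circ\widetilde R$ where $\widetilde R$ acts fibre-wise by a $y$-dependent permutation $\pi_y$ of $\Z^2$ sending $S_{p_2(n)}(f)(y)\mapsto S_{p_1(n)}(f)(y)$ together with a symbol flip at those sites; the hypothesis $\deg\ge 2$ enters only to show (via the LCLT, as in \cite{kosloff_Sanadhya_2024}) that the range of the cocycle along the times $p_i(n)$ has full density, so that the set $\mathcal K_y$ of ``matchable'' times has density one. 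Second---and this is precisely the difficulty the paper singles out---you never explain how to pass from an almost-everywhere/eventual statement to a single set $A$ of positive measure with $A\cap T^{-p_1(n)}A\cap S^{-p_2(n)}A=\emptyset$ for \emph{every} $n\in\N$, including small $n$. For the iterates $n,n$ one can combine Borel--Cantelli with the shift trick ($A=D\cap T^{-M}D\cap S^{-M}D$ with $M$ maximal), but for polynomial iterates this fails because $p_i(n)+M$ is not again of the form $p_i(m)$; the paper replaces it by a Cartesian power $Z^{\otimes k}$ together with a new positivity result on the maximal range of the cocycle (\Cref{thm:LCLT_2}\,(b), proved in Subsection~\ref{sub:LCLT revisit}), which yields a positive-measure set of base points for which every $n$ is good in at least one coordinate, whence the triple intersection is literally empty for all $n$. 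Your proposal contains no substitute for this step.

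In addition, the mixing argument you sketch is unsupported. For a $\T$- or $\Z/M$-extension, mixing of $T$ requires decay of the twisted correlations at every non-trivial fibre frequency, and a lattice LCLT does not give decay of the characteristic function of $\phi_N$ at irrational $\theta$: uniform $O(N^{-1/2})$ bounds on point probabilities are compatible with characteristic functions of modulus close to $1$, so the claim that the Gaussian profile ``kills the Fourier mass at $\theta\notin\Z$'' is a gap. The paper sidesteps this entirely by using a Bernoulli fibre, for which mixing of the skew product only requires $m\left(\left\|S_n(f)\right\|_\infty\le M\right)\to 0$ (\Cref{prop:WM}), a condition that does follow directly from the LCLT bound.
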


With this result at hand and \cite[Theorem 1.3.]{Frantzikinakis_Host_2023}, it remains to study the recurrence problem in the linear polynomial settings. Our next result shows that double recurrence fails when both polynomials are linear. 

\begin{thm}\label{thm:recurrence_2} Let $c,d \in \Z \setminus \{0\}$ then there exist $T,S$ two mixing, measure preserving transformations of a standard probability space $(X,\B,\mu)$, with $h_{\mu}(X,T) = h_{\mu}(X,S) = 0$ and $A\in\B$ with $\mu(A)>0$, such that for all $n\in\N$,
\[
A\cap T^{-cn}A\cap S^{-dn}A = \emptyset.
\]
\end{thm}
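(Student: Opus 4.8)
The plan is to follow the strategy behind \cref{thm:recurrence_1}: produce $T$, $S$ and $A$ from a single zero entropy, mixing stationary process whose Birkhoff sums obey a quantitative local central limit theorem, with the emptiness of the triple intersection hard-wired into the construction and the zero entropy automatic, so that the real analytic content is proving that $T$ and $S$ are mixing. What forces a new argument in the linear case is that the iterates $cn$ and $dn$ grow at the \emph{same} rate, so there is no polynomial ``scale gap'' between them to exploit as there is when $\deg p_1,\deg p_2\ge 2$; this means the local limit input must be used in a uniform, Berry--Esseen-type form rather than as a soft distributional limit. I would begin by reducing to a normal form for $(c,d)$ (dividing out $\gcd(c,d)$ and absorbing signs) and then fixing a probability space $(\Omega,\mathcal F,\mathbb P)$ carrying an ergodic, mixing, measure preserving $R$ with $h_{\mathbb P}(\Omega,R)=0$ together with an integer valued $\phi\in L^2(\mathbb P)$ whose partial sums $\phi_m=\sum_{j=0}^{m-1}\phi\circ R^{\,j}$ satisfy $\sup_{k\in\Z}\bigl|\sqrt m\,\mathbb P(\phi_m=k)-g(k/\sqrt m)\bigr|\to 0$ for the appropriate Gaussian density $g$; the construction of such $(\Omega,R,\phi)$ is precisely the kind of input used for \cref{thm:recurrence_1}, and I would either quote it or build it by cutting and stacking with controlled spacer statistics.

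From this data I would build the example over a nested sequence of scales $N_1<N_2<\cdots$. Realise $T$ as a rank-one cutting-and-stacking transformation whose tower heights $h_k$ are tuned to the $N_k$, take $A$ to be a fixed union of low levels of the first tower, and realise $S$ as a skew transformation over the \emph{same} combinatorial data twisted by $\phi$, so that $S^m$ displaces the skew coordinate by $\phi_m$ and $T,S$ generate a non-abelian group (they must: commuting transformations have double recurrence by Furstenberg--Katznelson). For $n$ with $\max(cn,dn)$ below the first tower height, $A$ and $T^{-cn}A$ already sit on disjoint levels, so the intersection is empty; for larger $n$ one uses that $A\cap T^{-cn}A$, while nonempty, is supported on an explicit thin union of levels of the tower at the scale matching $cn$, and the twist in $S$ is arranged so that, at that scale, $S^{-dn}A$ is disjoint from it. The point is that for every $n$ this reduces to a finite combinatorial incompatibility between $cn$, $dn$ and the possible values of $\phi_{dn}$ at the relevant stage, which the inductive construction is set up to enforce; this gives $A\cap T^{-cn}A\cap S^{-dn}A=\emptyset$ for all $n$. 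Zero entropy of both $T$ and $S$ is immediate from the bounded cutting parameter and controlled spacer counts, exactly as in \cref{thm:recurrence_1}.

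The genuinely hard step is mixing, and in particular mixing of $S$. Mixing of $T$ is the standard rank-one statement and only constrains the choice of spacers. For $S$ one must show that the skew displacement $\phi_m$ equidistributes, at the right rate, across the levels that matter at scale $k$, so that correlations $\int \ind_A\cdot(\ind_A\circ S^{m})\,d\mu$ converge to $\mu(A)^2$; here the linear growth of $cn\sim dn$ leaves no slack, and I expect to need the local central limit theorem with a uniform error term, integrated against the tower statistics, rather than a soft limit law. The main obstacle is therefore a simultaneous optimisation: one must choose the growth of the $N_k$ and $h_k$, the spacer statistics, and the law of $\phi$ so that (i) the local CLT error beats the relevant moduli, giving mixing of $S$; (ii) the entropy rate stays $0$; and (iii) the arithmetic incompatibility forcing emptiness survives at every scale and for $c$ and $d$ simultaneously. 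Balancing (i)--(iii) — together with proving the uniform local limit theorem that (i) rests on — is where the linear case is strictly harder than \cref{thm:recurrence_1}, and I would expect the bulk of the proof to be spent there.
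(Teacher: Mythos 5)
There are two genuine gaps. The first is structural: you take $A$ to be a union of levels of the first tower, i.e.\ a set measurable with respect to the base, while $S$ is a skew product over the \emph{same} base map with displacement cocycle $\phi_m$. But then $S^{-dn}A$ is again a base set, equal to the lift of $T_0^{-dn}A_0$ where $T_0$ is the base transformation, so your triple intersection collapses to (the lift of) $A_0\cap T_0^{-cn}A_0\cap T_0^{-dn}A_0$ for a \emph{single} transformation, which by Furstenberg--Katznelson \cite{Furstenberg_Katznelson_1978} has positive measure for some $n$; no choice of spacer statistics can make it empty for all $n$. The fiber coordinate must enter the definition of $A$: in the paper's skew-product argument (\cref{sub: c=d=1}) the relevant set is built from the fiber cylinder $[1]_0$, and the disjointness comes from the algebraic flip $\phi$ acting in the fiber, not from tower-level combinatorics. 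Relatedly, your plan to ``hard-wire'' exact emptiness for every $n$ into the induction is precisely the step you never substantiate; the paper never forces emptiness in the construction. It proves summability of $\mu\left(T^{-n}B\cap S^{-n}B\right)$ after a Cartesian power, applies Borel--Cantelli to get a positive-measure set $D$ with emptiness for all large $n$, and then upgrades to all $n\geq 1$ by a maximality trick (replace $D$ by $D\cap T^{-M}D\cap S^{-M}D$ for the largest $M$ giving positive measure). Without either that mechanism or a genuine enforcement argument, your sketch does not yield the conclusion.

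The second gap is the general pair $(c,d)$, which is the actual content of \cref{thm:recurrence_2}. Dividing by $\gcd(c,d)$ and absorbing signs does not reduce, say, $(2,3)$ to $(1,1)$, and your proposal offers nothing concrete for coprime $c\neq d$; the authors themselves found the skew-product route intractable here and changed strategy. In \cref{sec: Gaussian} they build the $n,n$ counterexample from Gaussian automorphisms with a singular spectral measure of polynomial Fourier decay: mixing is automatic (Rajchman measure), zero entropy is automatic (singular measure), $S$ is isomorphic to $T$ via second quantization of the conjugated unitary $WUW$ (so mixing of $S$ is free, contrary to your expectation that a uniform LCLT-versus-tower optimization is the bulk of the work), and the correlation bound is simply $\left|\hat{\sigma}(n)\right|^2\leq Cn^{-2\delta}$, summable after a $k$-fold Cartesian power with $2k\delta>1$. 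The decisive point your proposal lacks is the root trick: Gaussian automorphisms embed in flows, hence have roots of all orders, so one picks $L^{c}=P$, $R^{d}=Q$ and sets $T=L^{\otimes k}$, $S=R^{\otimes k}$, turning the $cn,dn$ problem back into the already-solved $n,n$ problem. Rank-one transformations generically admit no such roots, so your framework has no analogous bridge from $(1,1)$ to general $(c,d)$, and this is where the proposal would fail even if the first gap were repaired.
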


 We remark that the special case where $c=d=1$ was also studied in \cite{ryzhikov_2024_B}. 
 

 

\subsection{Structure of the paper}
While not immediately clear, it turns out that the simplest non-recurrence result (in non-commuting, zero entropy setting) is the $n,n$ case (in other words \Cref{thm:recurrence_2} for $c=d=1$). We present a simple proof in \Cref{sec:Proof_c=d=1}, as the baby case of the more difficult construction when the pair of iterates are given by injective integer polynomials of degree $2$ and higher i.e. \Cref{thm:recurrence_1}. The latter is done in \Cref{sec:Proof_rec_1}. One major new difficulty which appears in the higher degree polynomials case is that unlike the $n,n$ iterates case, the eventual failure of double recurrence (think Borel Cantelli Lemma) does not immediately imply the failure of double recurrence. As a result the proof becomes more subtle and we needed to obtain new results on the ranges of the cocycles from \cite{kosloff_Sanadhya_2024}. This part which we believe is of independent interest is carried out in \cref{sub:LCLT revisit}. 

We found the adaptation of the skew product construction in \Cref{sec:Proof_c=d=1} and \ref{sec:Proof_rec_1} to the general linear case (i.e. \Cref{thm:recurrence_2} where $c \neq 1,d \neq 1$) to be rather difficult. Hence in \Cref{sec: Gaussian} we construct Gaussian counterexamples to $n,n$ recurrence. These examples build upon the main ideas of Austin \cite{Austin_2025}, a Cartesian power trick from Rhyzikov \cite{ryzhikov_2024_B} and singular measures on the circle with polynomial Fourier decay. Then using the fact that these systems have roots of all orders we derive the general case of two linear integer polynomials which vanish at $0$. 

{\bf Acknowledgement.} The authors would like to thank Nikos Frantzikinakis for bringing the problem to our attention. The second-named author would like to thank the School of Mathematics, University of Bristol, for the hospitality. This work was partially supported by the Israel Science Foundation grant No. 1180/22.

\section{Proof of \Cref{thm:recurrence_2} for $c=d=1$}\label{sec:Proof_c=d=1} In this section, we prove counterexample to multiple recurrence in the non-commuting zero entropy setting for the pair of iterates $n,n$. This case turns out to be the most simplest one. Hence we will prove it first to illustrate the general method that will be used in the proof of \Cref{thm:recurrence_1} in \Cref{sec:Proof_rec_1}.  

An important tool that we will use is a the lattice local central limit theorem (LCLT) for deterministic systems (see \Cref{thm:LCLT_1} below) which was proved in \cite{MR4374685}. Given a probability preserving transformation $(X,\B,m,T)$, and a function $f:X\to \Z$, its \textbf{sum process} is defined by $S_n(f):=\sum_{k=0}^{n-1}f\circ T^k$, $n\in\mathbb{N}$. We will also refer to $S_n(f)$ as the cocycle corresponding to $f$. 

\begin{thm}\cite{MR4374685}\label{thm:LCLT_1}
Let $(X,\B,m,T)$ be an ergodic and aperiodic probability preserving transformation. There exists a square integrable function $f:X\to \Z$ with $\int_X fdm = 0$, such that
\[
\sup_{j\in\Z} \left|\sqrt{n}\, m\left(S_n(f)=j\right) -\frac{e^{-j^2/2n\sigma^2}}{\sqrt{2\pi \sigma^2}}\right|\xrightarrow[n\to\infty]{}0,
\]
where $\sigma^2=2 (\ln 2)^2$.
\end{thm}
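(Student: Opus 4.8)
The plan is to realise, inside the given system, a function whose Birkhoff sums coincide, off an event of measure $o(n^{-1/2})$, with the partial sums of an i.i.d.\ integer sequence, and then to invoke the classical lattice local limit theorem for such sums. Fix a span-one, centered random variable $Y$ on $\Z$ with $\operatorname{Var}(Y)=\sigma^2=2(\ln 2)^2$, and let $(Y_k)_{k\ge1}$ be i.i.d.\ copies of it; Gnedenko's lattice LCLT yields
\[
\sup_{j\in\Z}\left|\sqrt n\,\mathbb{P}\!\left(\sum_{k=1}^n Y_k=j\right)-\frac{e^{-j^2/2n\sigma^2}}{\sqrt{2\pi\sigma^2}}\right|\xrightarrow[n\to\infty]{}0 .
\]
It therefore suffices to produce $f\colon X\to\Z$ with $f\in L^2(m)$ and $\int_X f\,dm=0$, together with an increasing family of events $G_n\subseteq X$, such that $m(X\setminus G_n)=o(n^{-1/2})$ and the conditional law of $S_n(f)$ given $G_n$ is exactly that of $Y_1+\dots+Y_n$. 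Indeed, splitting $\{S_n(f)=j\}$ over $G_n$ and its complement gives $\bigl|m(S_n(f)=j)-\mathbb{P}(\sum_{k\le n}Y_k=j)\bigr|\le 2\,m(X\setminus G_n)$ uniformly in $j$, and multiplying by $\sqrt n$ and combining with the display above closes the argument.

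To construct $f$, I would run a nested cutting-and-stacking construction --- legitimate since $(X,m)$ is non-atomic (a consequence of aperiodicity) and Rokhlin's lemma applies. At stage $k$ one has a tower of base $B_k$ and height $H_k$, with $H_k\uparrow\infty$ growing very fast and the Rokhlin error $\eta_k:=m\bigl(X\setminus\bigsqcup_{0\le j<H_k}T^jB_k\bigr)$ so small that the errors below are $o(n^{-1/2})$, tower $k+1$ being obtained from tower $k$ by cutting into columns and re-stacking. The function is specified stage by stage, consistently with the part already built, so as to encode an i.i.d.\ $Y$-block up the $k$-th column: partition $B_k=\bigsqcup_{w}B_k^{w}$ by length-$H_k$ words $w$ over the support of $Y$ with conditional masses $m(B_k^{w}\mid B_k)=\prod_i\mathbb{P}(Y=w_i)$, and let $f$ on the $j$-th level above $B_k^{w}$ equal $w_{j+1}$; any clash between the prescriptions of successive stages --- confined to a vanishing neighbourhood of the tower tops --- is absorbed by a telescoping coboundary correction $u_k-u_k\circ T$ with $\sum_k\|u_k\|_2<\infty$. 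Then $f\in L^2(m)$, $\int_X f\,dm=0$ (each block has centered sum and the coboundaries integrate to $0$), and for $x$ in tower $k$ with at least $n$ levels above it and away from those neighbourhoods, $(f\circ T^i(x))_{0\le i<n}$ is a genuine i.i.d.\ $Y$-string. One takes $G_n$ to be this set for the least $k$ with $H_k$ far exceeding $n$.

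The delicate part, and where I expect the main obstacle, is the error bookkeeping: $m(X\setminus G_n)$ is bounded by $\eta_k$, by $Cn/H_k$, and by the total measure of the coboundary zones of the towers at level $\ge k$, and one must tune the growth of $H_k$, the errors $\eta_k$, and the norms $\|u_k\|_2$ against one another so that the sum is $o(n^{-1/2})$ while keeping $f$ square-integrable with mean zero; one must also check that the coboundary corrections, needed so that a \emph{single} $f$ works at \emph{every} scale, never perturb $S_n(f)$ outside a set already contained in $X\setminus G_n$. The uniformity over $j\in\Z$ together with the $\sqrt n$ normalisation leaves no slack here. Finally, the particular value $\sigma^2=2(\ln 2)^2$ plays no role in the method and merely reflects the increment law used; any centered, aperiodic, square-integrable $\Z$-valued $Y$ produces a statement of the same form with $\sigma^2=\operatorname{Var}(Y)$.
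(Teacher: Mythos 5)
There is a fundamental obstruction to your plan, and it is not the error bookkeeping you flag at the end. \Cref{thm:LCLT_1} is asserted for \emph{every} ergodic aperiodic system, and in this paper it is applied precisely to systems $(Y,\mathcal{C},m,Q)$ with \emph{zero entropy}. Your construction demands, for every $n$, an event $G_n$ with $m(X\setminus G_n)=o(n^{-1/2})$ on which the block $(f\circ T^{i}(x))_{0\le i<n}$ is a genuine i.i.d.\ $Y$-string with $Y$ non-degenerate. That is impossible when $h_m(T)=0$: truncating $f$ to a finite alphabet by $\phi_M(x)=x$ for $|x|\le M$ and $\ast$ otherwise (with $M$ chosen so that $\phi_M(Y)$ is non-degenerate), the entropy of the $n$-block of the finite-valued factor process $(\phi_M(f\circ T^{i}))_i$ is at least $m(G_n)\,n\,H(\phi_M(Y))$, so this finite partition would have positive dynamical entropy, contradicting $h_m(T)=0$. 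The contradiction persists if the conditional law is only close in total variation (Fannes-type continuity on the truncated alphabet), and already if $m(G_n)$ is merely bounded away from $0$, let alone $1-o(n^{-1/2})$. So no tuning of tower heights, Rokhlin errors $\eta_k$, or coboundary corrections $u_k$ can rescue the scheme; it fails conceptually for exactly the systems the theorem is needed for. (In positive-entropy systems with $h_m(T)\ge H(Y)$ one could simply take an exact i.i.d.\ factor via Sinai's theorem; the whole difficulty of the theorem is the deterministic case your coupling cannot reach.)

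For comparison, the proof in the cited reference \cite{MR4374685} (whose two-dimensional analogue is recalled in \cref{sub:LCLT revisit} of this paper) is built to evade precisely this obstruction: $f=\sum_k f_k$ is a multiscale series in which each layer is assembled from functions $\bar f_k$ with values in $\{-1,0,1\}$ that are nonzero only with tiny probability $\alpha_k^2$ and are i.i.d.\ \emph{only along a finite window} $0\le j\le 2d_k+p_k$ (cf.\ \cref{prop:functions_defn}), entering through $f_k=\sum_{j<p_k}\bigl(\bar f_k\circ T^{j}-\bar f_k\circ T^{d_k+j}\bigr)$. Window-limited independence with vanishing per-symbol activity is compatible with zero entropy, and the local limit theorem is then extracted by characteristic-function estimates, not by a total-variation comparison of $S_n(f)$ with an i.i.d.\ random walk at scale $n$ (no such comparison holds). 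This also shows that your closing remark is misleading: the constant $\sigma^2=2(\ln 2)^2$ is produced by the interplay of the scales $p_k$, $d_k$, $\alpha_k$ in the construction, not freely prescribed as $\operatorname{Var}(Y)$ of an increment law.
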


Let $(Y,\mathcal{C},m,Q)$ be a mixing, zero entropy, aperiodic probability preserving transformation. Examples of such maps are the rank one mixing transformations in \cite{MR1443143,MR2652597}, the time-one map of the horocycle flow on a compact hyperbolic surface and Gaussian automorphisms with a singular spectral measure (see \cref{sec: Gaussian}). By \Cref{thm:LCLT_1}, there exists a Borel function, $f: Y \rightarrow \Z$, such that the corresponding sums process $S_n(f) : Y \rightarrow \Z$, satisfies the lattice local central limit theorem (LCLT). Sometimes by abuse of notation, we will say that $f$ satisfies LCLT. In this section, we fix such a Borel function $f$.

Let $\Sigma = \{0,1\}^{\Z}$ be the space of arrays of $\{0,1\}$. We denote by $\sigma : \Sigma \rightarrow \Sigma$ the shift, given by, 
\[
(\sigma \, \omega) (n) = \omega (n+1), \hspace{3mm} \forall n \in \Z,
\]
for $\omega\in \Sigma$. We endow $\Sigma$ with the stationary (infinite) product measure $\nu=\left(\frac{1}{2}\delta_0+\frac{1}{2}\delta_1\right)^{\Z}$ with marginals $\left(\frac{1}{2},\frac{1}{2}\right)$, and consider the full shift $(\Sigma, \B(\Sigma), \nu, \sigma)$. Here $\B(\Sigma)$ is the Borel sigma algebra generated by cylinder sets in $\Sigma$. We define $(Z,\mathcal{D},\Tmu)$ to be the Cartesian product space $Y \times \Sigma$, endowed with product measure, in other words
\[(Z,\mathcal{D},\Tmu) : = (Y \times \Sigma, \mathcal{C} \otimes \B(\Sigma), m \times \nu).\]
Let  $\TT : Y \times \Sigma \rightarrow Y \times \Sigma$ be the skew product of $Q$ and $f$ defined by 
\begin{equation}\label{eq:TT}
    \TT (y, \omega) = (Q (y), \sigma_{f(y)} (\omega)).
\end{equation}
The skew product $\TT$ is a probability preserving transformation of $(Z,\mathcal{D},\Tmu)$ and for all $n\in\N$,
\begin{equation}\label{eq:TT^n_1}
    \TT^n (y, \omega) = (Q^n(y), \sigma_{S_n(f)(y)} (\omega)).
\end{equation}

\begin{prop}\label{prop:TT_WM_0} $(Z,\mathcal{D},\Tmu,\TT)$ is a mixing, zero entropy probability preserving system.
\end{prop}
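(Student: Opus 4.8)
The plan is to analyze $\widetilde T$ as a skew product over the zero-entropy base $(Y,\mathcal{C},m,Q)$ with $\{0,1\}^{\Z}$-Bernoulli fibers, where the fiber dynamics is the "skew shift by $f$". I would establish the three assertions separately: mixing, zero entropy, and that it is a genuine probability-preserving transformation (the last is immediate from \eqref{eq:TT}, since $Q$ preserves $m$ and each $\sigma_{f(y)}$ preserves the product measure $\nu$, so $\widetilde\mu$ is $\widetilde T$-invariant).

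For \textbf{zero entropy}: the Pinsker/Abramov–Rokhlin formula for skew products gives $h_{\widetilde\mu}(Z,\widetilde T) = h_m(Y,Q) + \int_Y h(\text{fiber})\,dm$. Since $h_m(Y,Q)=0$ by hypothesis, it suffices to show the fiber contribution vanishes. Here the relative system over $(Y,Q)$ is the shift $\sigma$ on $\Sigma$ driven by the $\Z$-valued cocycle $S_n(f)$; because $f$ is a (deterministic) function of the $Q$-orbit and the $\omega$-coordinate is only permuted by a translation depending on $y$, the relative entropy is that of a "random translation of a Bernoulli shift", which I expect to be $0$. Concretely, I would check that the factor map $(y,\omega)\mapsto y$ is relatively trivial for entropy: the $\widetilde T$-invariant $\sigma$-algebra generated by the base, together with the past of the cocycle, already recovers $\omega$ up to a group translation, and translations of $(\Sigma,\nu)$ have zero entropy. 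Alternatively, one can use that $h_{\widetilde\mu}(\widetilde T)=h_{\widetilde\mu}(\widetilde T\mid \mathcal{C})+h_m(Q)$ and bound $h_{\widetilde\mu}(\widetilde T\mid \mathcal{C})$ by the entropy of the $\Z$-action "translate by $f(y)$", which is zero.

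For \textbf{mixing}: this is the main obstacle, since the fiber is Bernoulli (hence very much not zero entropy as an absolute object) and mixing must come from the interaction of $Q$ with the cocycle $S_n(f)$. I would compute, for cylinder-type functions $F = F_0(y)\chi_a(\omega)$, $G = G_0(y)\chi_b(\omega)$ (with $\chi_a,\chi_b$ depending on finitely many coordinates of $\omega$), the correlation $\int F\cdot (G\circ\widetilde T^n)\,d\widetilde\mu$. Using \eqref{eq:TT^n_1}, the $\omega$-integral factorizes conditionally on $y$: once $|S_n(f)(y)|$ exceeds the (fixed, finite) width of the windows supporting $\chi_a$ and $\chi_b$, the shifted coordinates are disjoint from the original ones, and by independence of coordinates under $\nu$ the $\omega$-integral equals $\nu(\chi_a)\nu(\chi_b) = \widehat\chi_a(0)\widehat\chi_b(0)$. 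Hence
\[
\int F\cdot(G\circ\widetilde T^n)\,d\widetilde\mu
= \widehat\chi_a(0)\widehat\chi_b(0)\int_{|S_n(f)(y)|>R} F_0(y)\,G_0(Q^ny)\,dm(y) + \text{error},
\]
where $R$ is the window width and the error is controlled by $m(|S_n(f)|\le R)$. By the LCLT of \Cref{thm:LCLT_1}, $m(S_n(f)=j)=O(1/\sqrt n)$ uniformly in $j$, so $m(|S_n(f)|\le R)=O(R/\sqrt n)\to 0$; thus the error vanishes. For the main term, I would use mixing of $(Y,m,Q)$ to replace $\int F_0(y)G_0(Q^ny)\,dm$ by $\widehat{F_0}(0)\widehat{G_0}(0)$ up to $o(1)$, and combine with $\int_{|S_n(f)|\le R}\to 0$ again to drop the restriction of integration. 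Putting the pieces together yields $\int F\cdot(G\circ\widetilde T^n)\,d\widetilde\mu \to \widehat\chi_a(0)\widehat\chi_b(0)\,\widehat{F_0}(0)\,\widehat{G_0}(0) = \left(\int F\,d\widetilde\mu\right)\left(\int G\,d\widetilde\mu\right)$, which by a standard density argument (functions of the above product form are dense in $L^2(\widetilde\mu)$) gives mixing of $\widetilde T$.

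The one delicate point I would be careful about is that the decomposition of the $\omega$-integral requires $|S_n(f)(y)|$ to be large, and $S_n(f)$ can return to small values on a set whose measure decays only like $1/\sqrt n$; the reason the argument still closes is precisely that this rate, though slow, tends to zero, and the LCLT provides it uniformly. I would also note that the same computation, read for Gaussian or mixing base $Q$, is exactly the mechanism exploited later for the recurrence statements, so it is worth writing cleanly here.
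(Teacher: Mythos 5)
Your mixing argument is correct and is essentially the paper's own: the paper proves exactly your window decomposition as a standalone statement (\Cref{prop:WM} in the appendix) --- split according to whether $\|S_n(f)\|_\infty$ exceeds twice the window width, use independence of disjoint coordinate blocks under $\nu$ together with mixing of $Q$ for the main term, and kill the exceptional set with the uniform bound $m\left(S_n(f)=j\right)\leq c/\sqrt{n}$ supplied by \Cref{thm:LCLT_1} --- so on that point you are simply re-deriving the lemma the paper quotes, and the details you sketch do close.

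The gap is in the zero entropy part. Your justification --- that the fiber contribution vanishes because the $\omega$-coordinate is ``only permuted by a translation depending on $y$'' and ``translations of $(\Sigma,\nu)$ have zero entropy'' --- is false: the fiber maps $\sigma_j$ are powers of the full two-shift and have entropy $|j|\log 2>0$. As written, your argument would apply verbatim to the cocycle $f\equiv 1$, for which $\TT=Q\times\sigma$ has entropy $h_m(Q)+\log 2>0$, so the fact that the fibers act by shifts cannot by itself give vanishing relative entropy. After Abramov--Rokhlin reduces the problem to showing $h_{\Tmu}(\TT\mid\mathcal{C})=0$, the missing input is the \emph{recurrence} of the mean-zero, one-dimensional cocycle $S_n(f)$: taking the partition of $\Sigma$ by the zero coordinate, the $n$-th join under $\TT$ refines the fiber only along the sites visited by $\{S_k(f)(y):0\le k<n\}$, so the relative entropy equals $\log 2\cdot\lim_{n\to\infty}\frac1n\int_Y |R_n(y)|\,dm(y)$, where $R_n(y)$ is the range of the cocycle up to time $n$; recurrence (via the range theorem, or directly from the LCLT, which forces the range to grow on the order of $\sqrt{n}$) makes this limit zero. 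This is precisely what the references invoked in the paper's proof supply, and it is the ingredient absent from your sketch.
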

\begin{proof} We will prove that $(Z,\mathcal{D},\Tmu,\TT)$ is mixing by showing that $f$ satisfies \Cref{prop:WM} in \Cref{App_1}. It follows from  \Cref{thm:LCLT_1} that there exits $N \in \N$, such that for $j \in \Z$ and all $n > N$, we have
\[
\left|\sqrt{n}\, m\left(S_n(f)=j\right) -\frac{e^{-j^2/2n\sigma^2}}{\sqrt{2\pi \sigma^2}}\right| < \frac{1}{\sqrt{2\pi \sigma^2}}.
\]
Hence for all $n> N$, we have 
\[
m\left(S_n(f)=j\right) < \frac{e^{-j^2/2n\sigma^2}}{\sqrt{2 n \pi \sigma^2}} + \frac{1}{\sqrt{2 n\pi \sigma^2}} < \frac{2}{\sqrt{2 n\pi \sigma^2}}.
\]
Thus there exists a constant $c>0$, such that for all $n > N$ and for all $j\in\Z$,
\begin{equation}\label{eq:LCLT_bound}
m\left(S_n(f)=j\right)\leq \frac{c}{\sqrt{n}}.
\end{equation} 
Let $M\in\mathbb{N}$, then it follows,
\[
0\leq m\left(\left\|S_n(f)\right\|_\infty\leq M\right)=\sum_{|j|_\infty\leq M} m\left(S_n(f)=j\right)\leq \frac{(2M+1) c}{\sqrt{n}}\xrightarrow[n\to\infty]{}0.
\]
Hence we deduce from \Cref{prop:WM} in \Cref{App_1} that $(Z,\mathcal{D},\Tmu,\TT)$ is  mixing.  
The system $(Z,\mathcal{D},\Tmu,\TT)$ has zero entropy follows from an application of the Abramov-Rokhlin formula (for details see \cite[Proposition 2.2]{Huang_Shao_Ye_2024_A} or \cite[
Proposition 4.3]{kosloff_Sanadhya_2024}). 
\end{proof}
Let $\phi: \Sigma \rightarrow \Sigma$ be given by, 
\begin{equation*}
    (\phi \,\omega)(n) : =\begin{cases}
\omega(0) &\ \text{if}\,\, n =0,\\
1-\omega(n) &\ \text{if}\,\, n \neq 0,
\end{cases}
\end{equation*}for $\omega \in \Sigma$. We set $\Psi : Y \times \Sigma \rightarrow Y \times \Sigma$ to be $\Psi (y,\omega) = (y, \phi\,\omega)$ and define $\TS : Y \times \Sigma \rightarrow Y \times \Sigma$ to be
\begin{equation}\label{eq:TS}
    \TS  : = \Psi^{-1} \circ \TT \circ \Psi.
\end{equation}
Note that for all $n\in\N$,
\begin{equation}\label{eq:TS^n_1}
    \TS^n (y, \omega) = (Q^n(y), \phi^{-1}\circ\sigma_{S_n(f)(y)} \circ \phi(\omega)).
\end{equation}

\begin{prop}\label{prop:TS_WM_0} $(Z,\mathcal{D},\Tmu,\TS)$ is a  mixing, zero entropy probability preserving system.
\end{prop}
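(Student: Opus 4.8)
The plan is to reduce the statement for $\TS$ to the already-established Proposition \ref{prop:TT_WM_0} for $\TT$, exploiting the fact that $\TS$ is conjugate to $\TT$ via the map $\Psi$. First I would observe that $\Psi : Y\times\Sigma \to Y\times\Sigma$, $\Psi(y,\omega)=(y,\phi\,\omega)$, is an invertible measurable transformation whose inverse is measurable as well; indeed $\phi$ is its own inverse (flipping the coordinates $n\neq 0$ is an involution, and the $0$-coordinate is left untouched), so $\phi^{-1}=\phi$ and $\Psi^{-1}=\Psi$. The key measure-theoretic point is that $\Psi$ preserves $\Tmu = m\times\nu$: since $\Psi$ acts only on the $\Sigma$-coordinate, it suffices to check that $\phi$ preserves $\nu=\left(\tfrac12\delta_0+\tfrac12\delta_1\right)^{\Z}$, and this is immediate because $\nu$ is invariant under flipping any fixed subset of coordinates (each marginal $\tfrac12\delta_0+\tfrac12\delta_1$ is symmetric under $0\leftrightarrow 1$, and the coordinates are independent). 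Hence $\Psi$ is an invertible $\Tmu$-preserving transformation.

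Next I would invoke the general principle that all the properties in question are invariants of measure-theoretic isomorphism. Since $\TS = \Psi^{-1}\circ\TT\circ\Psi$ by \eqref{eq:TS}, the map $\Psi$ is an isomorphism of the system $(Z,\mathcal D,\Tmu,\TS)$ onto $(Z,\mathcal D,\Tmu,\TT)$: for every $A,B\in\mathcal D$ one has $\Tmu(A\cap \TS^{-n}B)=\Tmu\bigl(\Psi^{-1}(\Psi A)\cap \Psi^{-1}\TT^{-n}(\Psi B)\bigr)=\Tmu(\Psi A\cap \TT^{-n}\Psi B)$, using $\TS^{-n}=\Psi^{-1}\TT^{-n}\Psi$ and the $\Tmu$-invariance of $\Psi$. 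Letting $n\to\infty$ and applying Proposition \ref{prop:TT_WM_0}, the right-hand side converges to $\Tmu(\Psi A)\Tmu(\Psi B)=\Tmu(A)\Tmu(B)$, so $\TS$ is mixing. For the entropy, $h_{\Tmu}(\TS)=h_{\Tmu}(\Psi^{-1}\TT\Psi)=h_{\Tmu}(\TT)=0$ since Kolmogorov–Sinai entropy is a conjugacy invariant (conjugation by $\Psi$ carries generating partitions to generating partitions with identical entropy values). Finally, $\TS$ is a $\Tmu$-preserving transformation because it is a composition of three such maps.

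I do not expect any serious obstacle here: the whole content is that $\Psi$ is a $\Tmu$-preserving involution, which follows from the symmetry and independence of the product measure $\nu$, and that mixing and zero entropy pass through measure-preserving conjugacy. The one point worth stating carefully is the invariance of $\nu$ under $\phi$ — one should note that flipping even an infinite set of coordinates (here, all $n\neq 0$) preserves the product measure, which is clear coordinatewise but should be mentioned so the reader sees why $\phi_*\nu=\nu$ despite $\phi$ altering infinitely many coordinates. Alternatively, and perhaps more cleanly, one can simply write $\TS^n(y,\omega)=(Q^n(y),\phi^{-1}\circ\sigma_{S_n(f)(y)}\circ\phi(\omega))$ as in \eqref{eq:TS^n_1} and verify the hypotheses of Proposition \ref{prop:WM} in Appendix \ref{App_1} directly, but the conjugacy argument is shorter and makes the zero-entropy claim transparent at the same time.
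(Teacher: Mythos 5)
Your proposal is correct and follows essentially the same route as the paper: showing that $\phi$ (hence $\Psi$) is an invertible $\nu$-preserving (hence $\Tmu$-preserving) transformation, so that $\TS=\Psi^{-1}\circ\TT\circ\Psi$ is measure-theoretically isomorphic to $\TT$, and then transferring mixing and zero entropy from \Cref{prop:TT_WM_0} via the conjugacy. Your write-up merely spells out the coordinatewise invariance of the product measure and the isomorphism-invariance of the two properties, which the paper leaves implicit.
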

\begin{proof} It is easy to see that $\phi:\Sigma\to\Sigma$ is an invertible, measurable transformation which preserves $\nu$. Therefore,  $\Psi : Z \rightarrow Z$ is an invertible probability preserving transformation and $\TS$ is measure theoretically isomorphic to $\TT$, which is a mixing, zero entropy system. 
\end{proof}

\subsection{Construction of the transformations $T,S$ that satisfies \Cref{thm:recurrence_2}\label{sub: c=d=1} for $c=d=1$}\label{sec:construction_T_S} Let $(Z,\mathcal{D},\Tmu,\TT)$ and $(Z,\mathcal{D},\Tmu,\TS)$ be the  mixing, zero entropy systems as above. We set  $X = Z^{\otimes 3} : = Z \times Z \times Z $ and define two $\mu = \Tmu^{\otimes3}$ preserving systems
\[
T : X \rightarrow X, \hspace{10 mm} \mathrm{where} \hspace{2 mm} T = \TT^{\otimes3} : = \TT \times \TT \times \TT,
\]
and
\[
S : X \rightarrow X, \hspace{10 mm} \mathrm{where} \hspace{2 mm} T = \TS^{\otimes3} : = \TS \times \TS \times \TS. 
\]
We denote by $\B$ the product Borel sigma algebra $\B = \mathcal{D}^{\otimes3}$. Both the systems $(X,\B,\mu, T)$ and $(X,\B,\mu, S)$ are mixing and zero entropy. This follows from the fact that $(Z,\mathcal{D},\Tmu,\TT)$ and $(Z,\mathcal{D},\Tmu,\TS)$ are mixing and zero entropy. 

\begin{proof}[Proof of \Cref{thm:recurrence_2} for $c=d=1$] Let $B = (Y \times [1]_{0})^{\otimes3} \subset X$, then
\begin{align*}
    \mu(T^{-n}B \cap S^{-n}B) &= \left(\Tmu \left(\TT^{-n}(Y \times [1]_{0}), \TS^{-n}(Y \times [1]_{0})\right)\right)^3\\
    &= \left(\Tmu\left(\{(y,\omega) : \sigma_{S_n(f)(y)} (\omega)(0) = 1, \phi^{-1}\circ\sigma_{S_n(f)(y)} \circ \phi(\omega) (0) =1 \}\right)\right)^3.
\end{align*} 
Observe that for $\omega \in \Sigma$ we have $(\phi^{-1}\, \omega) (0) = (\omega) (0)$, hence
\begin{align*}
    \mu(T^{-n}B \cap S^{-n}B) &= \left(\Tmu\left(\{(y,\omega) : (\omega)(S_n(f)(y)) = 1, \sigma_{S_n(f)(y)} \circ \phi(\omega) (0) =1\}\right)\right)^3\\
    &= \left(\Tmu\left(\{(y,\omega) : (\omega)(S_n(f)(y)) = 1,  \phi(\omega) (S_n(f)(y)) =1\}\right)\right)^3.
\end{align*} 
Observe that if $S_n(f)(y) \neq 0$ then $\omega(S_n(f)(y)) \neq \phi(\omega) (S_n(f)(y))$, hence we have
\begin{align*}
    \mu(T^{-n}B \cap S^{-n}B) &= \left(\Tmu\left(\{(y,\omega) : S_n(f)(y)= 0, \omega(S_n(f)(y)) =1\}\right)\right)^3\\
    &= \left( \dfrac{1}{2}\cdot m\left(\{y\in Y : S_n(f)(y) = 0\}\right)  \right)^3\\
    &= \dfrac{1}{8} \cdot \left(m\left(\{y\in Y : S_n(f)(y) = 0\}\right)\right)^3.
\end{align*} 
Using the local central limit theorem (\Cref{thm:LCLT_1}) by \eqref{eq:LCLT_bound} there exists $N \in \N$ and a constant $C \in \N$ such that for all $n > N$ we have,
\begin{align*}
    \mu(T^{-n}B \cap S^{-n}B) &< \dfrac{C}{n^{3/2}}.
\end{align*} In other words, we get
\[
\sum_{n=1}^{\infty} \mu(B \cap T^{-n}B \cap S^{-n}B) < \infty.
\]
Hence using the Borel-Cantelli lemma, we get that for a.e. $x\in X$, $x \in B \cap T^{-n}B \cap S^{-n}B$ for finitely many $n \in \N$. In other words there exists $N \in \N$, such that the set  
\begin{equation}\label{eq:Def_D}
    D_N : = \{x \in B : \,\mathrm{for}\,\mathrm{all}\, n >N,\mathrm{either}\, T^n x \notin B \,\mathrm{or}\, S^n x \notin B \},
\end{equation} has positive $\mu$ measure.  We can choose $N$ to be smallest such integer and write $D=D_N$. Now let $ M\geq 0$, be the largest integer such that 
\[
A := D \cap T^{-M}D \cap S^{-M}D.
\]
is of positive $\mu$ measure. Since $D\subset B$, for all $n>N$,
\[
D \cap T^{-n}D \cap S^{-n}D\subset D \cap T^{-n}B \cap S^{-n}B=\emptyset,
\]
thus $M\leq N$. This shows that $M$ is a well defined integer and $M \in \{0,\ldots,N\}$.
In addition, as $\mu(D)>0$, $M\geq 0$.  For all $n \in \N$, we have $T^{-n} A \subset T^{-(n+M)}D$ and $S^{-n} A \subset S^{-(n+M)}D$. Hence for all $n\in\N$,
\[
A \cap T^{-n} A \cap S^{-n} A \subset D \cap T^{-(n+M)}D \cap S^{-(n+M)}D. 
\] 
By the definition of $M$, it implies that for all $n\in \N$, $A \cap T^{-n} A \cap S^{-n} A = \emptyset$ as needed. 
\end{proof}
\section{Proof of \Cref{thm:recurrence_1}}\label{sec:Proof_rec_1} The proof of \Cref{thm:recurrence_1} builds upon the ideas discussed in \Cref{sec:Proof_c=d=1}, where the case $c=d=1$ of \Cref{thm:recurrence_2} was proved. Here we will use a $2$-dimensional version of the lattice local central limit theorem (LCLT) for deterministic systems (see \Cref{thm:LCLT_2} below) which was proved in \cite{kosloff_Sanadhya_2024}. Recall that given a measure preserving system $(\Omega,\mathcal{F},m,T)$ and a function $f:\Omega\to\Z^2$, one can define a $\Z^2$ valued cocycle for $x \in \Omega$ by
\[
S_n(f)(x):=\begin{cases}
\sum_{k=0}^{n-1}f\circ T^k(x), & n\in\N,\\
(0,0),& n=0,\\
-\sum_{k=n}^{-1}f\circ T^k(x), &n\in -\N.
\end{cases}
\]
\begin{thm}\label{thm:LCLT_2}
Let $(\Omega,\mathcal{F},m,T)$ be an ergodic and aperiodic probability preserving transformation. There exists a square integrable function $f:\Omega\to\mathbb{Z}^2$ with $\int_{\Omega} fdm = 0$, such that:
\begin{enumerate}[label=(\alph*)]
    \item \cite[Theorem 1.1]{kosloff_Sanadhya_2024}\label{thm_sec_a:LCLT_2}
\[
\sup_{j\in\Z^2} n\left|m\left(S_n(f)=j\right)-\frac{1}{2\pi n\sigma^2}e^{-\frac{\|j\|^2}{2n\sigma^2}}\right|\xrightarrow[n\to\infty]{}0,
\]
where $\sigma^2=2 (\ln 2)^2$.
\vspace{0.2cm}
\item \label{thm_sec_b:LCLT_2}[See \cref{prop: 2nd property of LCLT function}] For all $N\in\N$ $$m\left(\left|\left\{S_j(f):\ -N\leq j\leq N\right\}\right|=2N+1\right)>0.$$ 
\end{enumerate}

\end{thm}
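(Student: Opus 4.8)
Part~(a) is exactly \cite[Theorem 1.1]{kosloff_Sanadhya_2024}, and I would take the very same function $f$ produced there, so nothing new is needed for it. The content of the statement is property~(b), which I would deduce directly from the way $f$ is built in \cite{kosloff_Sanadhya_2024} rather than from (a) as a black box: a soft argument from (a) alone cannot succeed, since the expected number of self-intersections of the orbit segment, $\sum_{-N\le i<j\le N} m\bigl(S_i(f)=S_j(f)\bigr)=\sum_{\ell=1}^{2N}(2N+1-\ell)\,m\bigl(S_\ell(f)=0\bigr)$, is of order $N\log N$ by (a), so no first-moment bound controls the event in (b), and that event genuinely has $N$-dependent probability.

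Recall that $f$ is constructed in \cite{kosloff_Sanadhya_2024} by a Rokhlin-tower (cutting-and-stacking over $T$) scheme: there is a sequence of Rokhlin towers for $(\Omega,\mathcal{F},m,T)$ with bases $B_k$ and heights $h_k\to\infty$, and $f$ is prescribed level by level inside each tower, the prescribed increments being precisely what forces the cocycle to equidistribute in $\Z^2$ at scale $\sqrt n$. The plan is to exhibit, inside the $k$-th tower for $k$ large, an interval $J_k$ of $\ell_k$ consecutive levels, with $\ell_k\to\infty$, on which the partial sums of the prescribed increments of $f$ trace an injective lattice path in $\Z^2$. Granting this, fix $N\in\N$, choose $k$ with $\ell_k\ge 2N+1$, and let $E$ be the set of points whose level in the $k$-th tower lies in the sub-interval of $J_k$ of length $\ell_k-2N+1$ centred in $J_k$, so that for $x\in E$ the points $T^{-N}x,\dots,T^{N-1}x$ all lie at levels inside $J_k$; then $m(E)=(\ell_k-2N+1)\,m(B_k)>0$. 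For every $x\in E$, the successive increments $S_j(f)(x)-S_{j-1}(f)(x)=f(T^{j-1}x)$, $-N+1\le j\le N$, are the prescribed increments along $2N$ consecutive levels of $J_k$, so $\bigl(S_j(f)(x)\bigr)_{-N\le j\le N}$ is the translate by $S_{-N}(f)(x)$ of a length-$(2N+1)$ contiguous sub-path of the injective path attached to $J_k$, hence consists of $2N+1$ distinct points of $\Z^2$. This yields $m\bigl(|\{S_j(f):-N\le j\le N\}|=2N+1\bigr)\ge m(E)>0$, which is (b).

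The step I expect to be the main obstacle is producing the self-avoiding block $J_k$. Two routes present themselves. Either one checks that the increments already prescribed in \cite{kosloff_Sanadhya_2024} are, on a suitable block of each tall tower, an injective walk — say because the construction uses a growing number of levels to spread out before the cocycle is allowed to recur — in which case (b) costs only the short combinatorial argument above. Or, if this is not manifest, one modifies $f$ by reserving in each tower an extra block of $o(h_k)$ consecutive levels on which $f$ realizes a fixed self-avoiding walk (for instance $e_1$ repeated and then $e_2$ repeated, tracing an L-shaped path, or a spiral), and one re-runs the estimates of \cite{kosloff_Sanadhya_2024}: the modified $f$ still has $\int_\Omega f\,dm=0$ and lies in $L^2$, and — the delicate point — the local limit asymptotics in (a) are unchanged to leading order because the inserted block occupies a fraction $o(1)$ of each tower, so neither the variance $\sigma^2=2(\ln 2)^2$ nor the $o(1/n)$ error term in (a) is disturbed. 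Verifying this robustness of the two-dimensional LCLT under the modification is where the real work would lie.
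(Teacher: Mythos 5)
There is a genuine gap, and it sits exactly where you flag it: producing the positive-measure set on which the cocycle path over a window of length $2N+1$ is injective. Your route (1) rests on an inaccurate picture of how $f$ is built in \cite{kosloff_Sanadhya_2024}: $f$ is not prescribed deterministically level by level on Rokhlin towers. It is an infinite sum $f=\sum_k f_k$, where $f_k^{(i)}=\sum_{j=0}^{p_k-1}\bigl(\bar{f}_k^{(i)}\circ T^j-\bar{f}_k^{(i)}\circ T^{d_k+j}\bigr)$ and the $\bar{f}_k^{(i)}$ are $\{-1,0,1\}$-valued functions whose shifts along blocks of length $2d_k+p_k$ are i.i.d.\ with $m(\bar{f}_k^{(i)}\neq 0)=\alpha_k^2$ small (\cref{prop:functions_defn}). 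In particular $f$ vanishes with high probability at each point, so on a typical orbit block the increments are mostly $(0,0)$ and the path $(S_j(f))$ is very far from injective; there is no deterministic ``injective lattice path attached to $J_k$'' to be found, and injectivity on a window is a genuinely rare event that must be produced from the independence structure. Your route (2) concedes this and proposes altering $f$, but then part (a) is no longer the cited theorem: you would have to re-derive the two-dimensional LCLT (a uniform $o(1/n)$ statement) and the zero-mean property for the modified function, and to do so for blocks long enough to handle every $N$ simultaneously. You explicitly leave this verification open, and it is the bulk of the work, not a routine robustness check; so as it stands the proposal does not prove (b).

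For comparison, the paper proves (b) for the original $f$ by exploiting the layered independence rather than modifying the construction. Fix $N$ and write each first-coordinate increment as $S_{n+1}(f^{(1)})-S_n(f^{(1)})=Z(n)+Y(n)$, where $Z(n)$ collects the finitely many layers with $p_k\le 2N$ (uniformly bounded below by some $M<0$, \Cref{fact: Z does not matter}) and $Y(n)$ the remaining layers, which are independent of $Z(n)$ by \cref{prop:functions_defn}.\ref{prop_sec_c:functions_defn}. The key step (\cref{lem: main new input in LCLT}) exhibits an explicit positive-measure event $D=\bigcap_k D_k$: on a window of $C$ consecutive high layers one asks $\bar{f}_k^{(1)}\circ T^j=1$ and $\bar{f}_k^{(1)}\circ T^{d_k+j}=0$ for $0\le j<p_k+2N$, and on all other high layers one asks both to vanish; the $D_k$ are independent with $m(D_k)\ge \exp(-2/p_k)$ eventually, and $\sum_k 1/p_k<\infty$ makes the infinite product positive. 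On $D$ one gets $Y(n)>C\ge -M$ for all $0\le n<2N$, so the first coordinate of the cocycle is strictly increasing and the path $(0,0)<S_1(f)<\cdots<S_{2N}(f)$ is injective; translating by $T^N$ via measure preservation gives (b) (\Cref{cor: 2nd property of LCLT function}). If you want to salvage your sketch, this decomposition into a bounded ``low-frequency'' part plus an independent ``high-frequency'' part that you can force to be large and positive is the missing idea.
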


We remark that part \ref{thm_sec_b:LCLT_2} of  \Cref{thm:LCLT_2} is also a consequence of the construction of the LCLT function in \cite{kosloff_Sanadhya_2024}, however it is not proven there. In subsection \ref{sub:LCLT revisit}, we will revisit the construction of the LCLT function from \cite{kosloff_Sanadhya_2024} and prove this part.

As in \cref{sec:Proof_c=d=1}, let $(Y,\mathcal{C},m,Q)$ be a mixing, zero entropy, aperiodic probability preserving transformation. By \Cref{thm:LCLT_2} for $d=2$, there exists a Borel function, $g: Y \rightarrow \Z^2$, given by $g (y) = (g^{(1)}(y), g^{(2)}(y))$ for $y \in Y$, such that the corresponding $2$-dimensional ergodic sums process $S_n(g) : Y \rightarrow \Z^2$, given by $S_n(g) (y):=\sum_{k=0}^{n-1}g\circ Q^k (y)$,
satisfies the lattice local central limit theorem. In this subsection, we fix such a Borel function $g$. Let us define $f: Y \rightarrow \Z^2$ as 
\begin{equation}\label{eq:def_f}
    f = 2g,
\end{equation} and similarly $S_n(f) : Y \rightarrow \Z^2$ is given by $S_n(f) (y):=\sum_{k=0}^{n-1}f\circ Q^k (y)$. Observe that for $y \in Y$, 
\[\{S_n(f)(y): n \in \N \} \subset 2\Z^2.\]   

Let $\Sigma = \{0,1\}^{\Z^2}$ be the space of $2$-dimensional arrays of $\{0,1\}$. For $v \in \Z^2$, we denote by $\sigma_v : \Sigma \rightarrow \Sigma$ the $2$-dimensional shift, given by for every $\omega\in \Sigma$
\[
(\sigma_v \, \omega) (u) = \omega (u+v).
\] 
We endow $\Sigma$ with the stationary (infinite) product measure $\nu$ which assigns the measure $\frac{1}{2}$ to each of the cylinder sets $[0]_{(0,0)}$ and $[1]_{(0,0)}$ and consider the $\Z^2$ Bernoulli shift $(\Sigma, \B(\Sigma), \nu, \sigma)$. Here $\B(\Sigma)$ is the Borel sigma algebra generated by cylinder sets in $\Sigma$. We define $(Z,\mathcal{D},\Tmu)$ to be the Cartesian product space $Y \times \Sigma$, endowed with product measure, in other words
\[(Z,\mathcal{D},\Tmu) : = (Y \times \Sigma, \mathcal{C} \otimes \B(\Sigma), m \times \nu).\]
Let  $\TT : Y \times \Sigma \rightarrow Y \times \Sigma$ be the skew product of $Q$ and $f$ defined by 
\begin{equation}\label{eq:TT_1}
    \TT (y, \omega) = (Q (y), \sigma_{f(y)} (\omega)).
\end{equation}
The skew product $\TT$ is a probability preserving transformation of $(Z,\mathcal{D},\Tmu)$ and for all $n\in\N$,
\begin{equation}\label{eq:TT^n_2}
    \TT^n (y, \omega) = (Q^n(y), \sigma_{S_n(f)(y)} (\omega)).
\end{equation} Note that the transformation $(Z,\mathcal{D},\Tmu,\TT)$ defined in \eqref{eq:TT_1} is  mixing (see \Cref{prop:TT_WM_0}, where similar statement is proved for the one dimensional case) and zero entropy. The latter follows from Abramov-Rokhlin formula and using the
recurrence of the cocycle $S_n(f)$ (see \cite[
Proposition 4.3]{kosloff_Sanadhya_2024}).

For a $k\geq3$ (which will be determined later on, see \Cref{lem:A_n_k<1}), we set $X = Z^{\otimes k} : = \overbrace{Z \times Z \times\ldots \times Z}^\text{$k$ times}$ and define a  $\mu = \Tmu^{\otimes k}$ preserving system,
\[
T : X \rightarrow X, \hspace{10 mm} \mathrm{where} \hspace{2 mm} T = \TT^{\otimes k} : = \overbrace{\TT \times \TT \times\ldots \times \TT}^\text{$k$ times}.
\]
We denote by $\B$ is the product Borel sigma algebra $\B = \mathcal{D}^{\otimes k}$. Since $(Z,\mathcal{D},\Tmu,\TT)$ is  mixing and zero entropy, it follows that the system $(X,\B,\mu, T)$ is mixing and zero entropy.

\subsection{Construction of the mixing system $(X,\B,\mu, S)$ that satisfies \Cref{thm:recurrence_1}}\label{sec:construction_S} Let $f:Y \to\Z^2$ be the function as in \eqref{eq:def_f}. Let $p:\Z\to\Z$, be a polynomial, for $y \in Y$ and $N\in\N$, we define\begin{equation}\label{eq:R_N_p_y}
R_N^{(p)}(y):=\left\{S_{p(m)}(f)(y):\ 1\leq m\leq N\right\} \subset 2\Z^2.
\end{equation}
Recall that the partial sum process $S_{p(n)}(f)(y)$ is with respect to the system $(Y,\mathcal{C},m,Q)$. 
In the rest of the section, we will work with polynomials with positive leading coefficient. Note that if the statement of \cref{thm:recurrence_1} holds for polynomials with positive leading coefficient than it also holds for polynomials with negative leading coefficient by replacing $T$ or $S$ (or both) with $T^{-1}$ and $S^{-1}$ respectively. 

\begin{prop}\label{prop:Range_of_cocycle_along_p} Let $p:\Z\to\Z$ be a polynomial with a positive leading coefficient and $\deg(p)\geq 2$, then for almost every $y \in Y$,
\[
\lim_{n\to\infty} \dfrac{\left|R_{n}^{(p)}(y)\right|}{n} = 1.
\]
\end{prop}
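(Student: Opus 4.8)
The plan is to show that for $\mu$-almost every $y$, the map $m \mapsto S_{p(m)}(f)(y)$ is eventually injective, in the strong sense that the number of collisions among $\{S_{p(m)}(f)(y): 1 \le m \le n\}$ grows sublinearly in $n$; this immediately gives $|R_n^{(p)}(y)|/n \to 1$ (the upper bound $|R_n^{(p)}(y)| \le n$ is trivial). The key quantity to control is $C_n(y) := \#\{(i,j): 1 \le i < j \le n,\ S_{p(i)}(f)(y) = S_{p(j)}(f)(y)\}$, and it suffices to prove $C_n(y)/n \to 0$ a.e., for which (by a standard argument along a subsequence $n = n_t$ together with monotonicity, since $C_n$ is nondecreasing and $n$ is roughly doubling) it is enough to have $\mathbb{E}[C_n]/n \to 0$ fast enough, or more robustly, $\sum_t \mathbb{E}[C_{n_t}]/n_t < \infty$ along a geometric subsequence.

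The main step is therefore estimating $\mathbb{E}[C_n] = \sum_{1 \le i < j \le n} m\big(S_{p(j)}(f) - S_{p(i)}(f) = 0\big)$. Writing $S_{p(j)}(f) - S_{p(i)}(f) = \sum_{k=p(i)}^{p(j)-1} f \circ Q^k$ (using $p(i) < p(j)$ since $p$ has positive leading coefficient and is injective, so eventually increasing on $\N$), this is, by $Q$-invariance of $m$, equal to $m\big(S_{p(j)-p(i)}(f) = 0\big)$. Now invoke the two-dimensional LCLT, \Cref{thm:LCLT_2}\ref{thm_sec_a:LCLT_2}: there is a constant $c$ with $m(S_\ell(f) = 0) \le c/\ell$ for all $\ell \ge 1$ (for small $\ell$ this is trivial since probabilities are $\le 1$; for large $\ell$ it follows from the LCLT with $j = 0$). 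Hence $\mathbb{E}[C_n] \le c \sum_{1 \le i < j \le n} \frac{1}{p(j) - p(i)}$. Since $\deg p = D \ge 2$, for $j > i$ we have $p(j) - p(i) \ge c' (j-i)(j^{D-1})$ roughly (the leading-coefficient lower bound $p(j)-p(i) \gtrsim j^{D-1}(j-i)$ is where $D \ge 2$ is essential), so $\sum_{i<j\le n} \frac{1}{p(j)-p(i)} \lesssim \sum_{j \le n} \frac{1}{j^{D-1}} \sum_{r \ge 1} \frac{1}{r} $ — this last harmonic sum diverges, so I should be more careful: use instead $\sum_{r=1}^{j-1} \frac{1}{j^{D-1} r} \lesssim \frac{\log j}{j^{D-1}}$, giving $\mathbb{E}[C_n] \lesssim \sum_{j\le n} \frac{\log j}{j^{D-1}}$, which is $O(\log^2 n)$ when $D = 2$ and $O(1)$ when $D \ge 3$. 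In all cases $\mathbb{E}[C_n] = o(n)$, indeed $\mathbb{E}[C_n]/n$ is summable along $n_t = 2^t$.

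Putting it together: along $n_t = 2^t$, $\sum_t \mathbb{E}[C_{n_t}]/n_t < \infty$, so by Markov's inequality and Borel–Cantelli, $C_{n_t}(y)/n_t \to 0$ a.e.; for general $n$ with $n_t \le n < n_{t+1}$, monotonicity of $C_n$ gives $C_n(y)/n \le C_{n_{t+1}}(y)/n_t = 2 \cdot C_{n_{t+1}}(y)/n_{t+1} \to 0$. Finally $|R_n^{(p)}(y)| \ge n - C_n(y)$, so $\liminf |R_n^{(p)}(y)|/n \ge 1$, and combined with $|R_n^{(p)}(y)| \le n$ this yields the claim. The main obstacle I anticipate is the bookkeeping in the estimate $p(j) - p(i) \gtrsim j^{\deg p - 1}(j - i)$ uniformly over the relevant range (one must handle the finitely many initial terms where $p$ may not yet be increasing, and make the implied constants explicit enough that the double sum genuinely converges when $\deg p = 2$ — the logarithmic factors are the delicate point); everything else is a routine combination of the LCLT tail bound with Borel–Cantelli.
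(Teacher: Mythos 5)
Your proposal is correct, and it is essentially the argument the paper itself invokes: the paper's proof is just a pointer to \cite[Proposition 4.4]{kosloff_Sanadhya_2024}, which rests on the same ingredients you use — the LCLT anti-concentration bound $m(S_\ell(f)=0)=m(S_\ell(g)=0)\le c/\ell$, the lower bound $p(j)-p(i)\gtrsim j^{\deg p-1}(j-i)$ available because $\deg p\ge 2$, summation of collision probabilities, and a Borel--Cantelli/monotonicity step to pass from a geometric subsequence to all $n$. The bookkeeping you flag (finitely many indices where $p$ is not yet increasing, the logarithmic factor when $\deg p=2$, and the harmless replacement of $f=2g$ by $g$ in the LCLT) is routine and does not affect the conclusion.
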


\begin{proof} Follows from an argument similar to \cite[Proposition 4.4]{kosloff_Sanadhya_2024}. 
\end{proof}
For $y\in Y$, we define
\begin{equation}\label{eq:k_p_y}
K^{(p)}(y):= \{n\in\N : S_{p(n)}(f)(y) \neq (0,0) \,\mathrm{and}\,S_{p(n)}(f)(y) \notin R_{n-1}^{(p)}(y)\}.
\end{equation}
\begin{cor}\label{cor:k_p_y_banach} Let $p:\Z\to\Z$ be a polynomial with a positive leading coefficient and $\deg(p)\geq 2$, then for almost every $y \in Y$, $K^{(p)}(y)$ has density one.
\end{cor}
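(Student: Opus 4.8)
The plan is to deduce this from \Cref{prop:Range_of_cocycle_along_p} by a bookkeeping argument about when the range $R_n^{(p)}(y)$ picks up a genuinely new value. Fix $y\in Y$, set $R_0^{(p)}(y):=\emptyset$, and define
\[
L^{(p)}(y):=\left\{n\in\N:\ S_{p(n)}(f)(y)\notin R_{n-1}^{(p)}(y)\right\},
\]
the set of indices at which $R_\bullet^{(p)}(y)$ strictly grows. First I would record the identity $\left|L^{(p)}(y)\cap[1,n]\right|=\left|R_n^{(p)}(y)\right|$ for every $n\in\N$, which is immediate by induction on $n$: passing from $R_{n-1}^{(p)}(y)$ to $R_n^{(p)}(y)$ adjoins the value $S_{p(n)}(f)(y)$, and the cardinality increases by exactly $1$ precisely when $n\in L^{(p)}(y)$. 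Combined with \Cref{prop:Range_of_cocycle_along_p}, this already gives, for a.e.\ $y\in Y$, that $L^{(p)}(y)$ has density one.

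Next I would compare $K^{(p)}(y)$ with $L^{(p)}(y)$. By \eqref{eq:k_p_y} we have $K^{(p)}(y)=L^{(p)}(y)\cap\{n:\ S_{p(n)}(f)(y)\neq(0,0)\}$, so the only indices lying in $L^{(p)}(y)\setminus K^{(p)}(y)$ are those $n$ with $S_{p(n)}(f)(y)=(0,0)$ and $(0,0)\notin R_{n-1}^{(p)}(y)$. The key observation is that there is at most one such $n$: once the value $(0,0)$ enters the range $R_\bullet^{(p)}(y)$ it never leaves, so $(0,0)$ can be a genuinely new value at most once (and if $S_{p(n)}(f)(y)$ is never $(0,0)$, then $K^{(p)}(y)=L^{(p)}(y)$ outright). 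Hence $\left|K^{(p)}(y)\cap[1,n]\right|\geq\left|L^{(p)}(y)\cap[1,n]\right|-1=\left|R_n^{(p)}(y)\right|-1$ for all $n$; since also $\left|K^{(p)}(y)\cap[1,n]\right|\leq n$, dividing by $n$ and letting $n\to\infty$ yields $\left|K^{(p)}(y)\cap[1,n]\right|/n\to 1$ for a.e.\ $y$, which is the claim.

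Given \Cref{prop:Range_of_cocycle_along_p}, there is no real obstacle here; the only points requiring a little care are the counting identity $\left|R_n^{(p)}(y)\right|=\left|L^{(p)}(y)\cap[1,n]\right|$ and the remark that deleting the single index (if any) at which the cocycle first returns to the origin cannot affect the density.
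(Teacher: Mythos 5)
Your proof is correct and is essentially the paper's argument: both reduce the claim to \Cref{prop:Range_of_cocycle_along_p} via the observation that $\left|K^{(p)}(y)\cap[1,n]\right|$ and $\left|R_n^{(p)}(y)\right|$ differ by at most $1$, the discrepancy coming only from the single index (if any) at which the cocycle first visits $(0,0)$. Your extra step through the auxiliary set $L^{(p)}(y)$ is just a more explicit bookkeeping of the same counting the paper phrases as a bijection between $K^{(p)}(y)\cap[0,n]$ and $R_n^{(p)}(y)\setminus\{(0,0)\}$.
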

\begin{proof}
For every $z\in R_n^{(p)}(y)\setminus\{(0,0)\}$ there exists a unique $m\in K^{(p)}(y)\cap [0,n]$ such that $S_{p(m)}(f)(y) = z$. Hence it follows,  
\[
\left|\left|R_n^{(p)}(y)\right|-\left|K^{(p)}(y)\cap[0,n]\right|\right|\leq 1.
\]
Now Proposition \ref{prop:Range_of_cocycle_along_p} implies that $K^{(p)}(y)$ has density one. 
\end{proof} Let $p_1, p_2 : \Z \rightarrow \Z$ be polynomials with positive leading coefficients and $\deg(p_1), \deg(p_2) \geq 2$. For $y \in Y$, we define
\begin{equation}\label{eq:def_curly_K_y}
\mathcal{K}_y=K^{(p_1)}(y)\cap K^{(p_2)}(y).
\end{equation}
Observe that $\cref{cor:k_p_y_banach}$ implies that for almost every $y \in Y$ the set $\mathcal{K}_y$ has density one. Hence it follows that for almost every $y \in Y$, and $j\in \{1,2\}$, the set
\[
S(j,y) : = \left\{S_{p_j(n)}(f)(y):\ n\in \mathcal{K}_y\right\}\subset 2\Z^2,
\] is infinite and contains distinct terms. The fact that for almost every $y \in Y$, and $j\in \{1,2\}$, $S(j,y) \subset 2\Z^2$ implies that $\Z^2\setminus S(j,y)$ is infinite as well. We denote by $D \subset Y$ the full measure set for which the sets $S(j,y)$, $j \in \{1,2\}$ and their complements are infinite.

In what follows, for $y \in D$, we will define a permutation $\pi_y: \Z^2 \rightarrow \Z^2$, such that $\pi_y$ maps
\begin{equation}\label{eq:pi_y_maps}
    (0,0) \mapsto (0,0), \hspace{4mm} \mathrm{and} \hspace{4mm} S_{p_2(n)} (f) (y) \mapsto S_{p_1(n)} (f) (y),\, \forall n \in \mathcal{K}_y.
\end{equation} To this effect, for $y \in D$ we fix an enumeration of $\mathcal{K}_y = \{k_1(y)<k_2(y)< \ldots\}$. For ease of notation, when $y$ is known, we will denote $\mathcal{K}_y = \{k_1 < k_2 < \ldots\}$. Thus for $y \in D$ and $j\in \{1,2\}$, we enumerate $S(j,y) = \{S_{p_j(k_i)} (f) (y) \}_{i=1}^{\infty} \subset \Z^2$. For $y \in D$ and $j \in \{1,2\}$ we set 
\begin{equation}\label{eq:L_j_y}
    L(j,y) = \Z^2\setminus (S(j,y) \cup \{(0,0)\}) = \Z^2 \setminus (\{S_{p_j(k_i)} (f) (y) \}_{i=1}^{\infty} \cup \{(0,0)\}) \subset \Z^2.
\end{equation} $L(j,y)$ is also infinite as discussed above. Let $L(j,y) := \{\ell(j,y)_1, \ell(j,y)_2,\ldots\} \subset \Z^2$ be an enumeration of the set $L(j,y)$. For $y \in D$, $j \in \{1,2\}$, we have partition of $\Z^2$ of the form
\[
\Z^2 = \{(0,0)\} \cup S(j,y) \cup L(j,y) = \{(0,0)\} \cup \{S_{p_j(k_i)} (f) (y) \}_{i=1}^{\infty} \cup \{\ell(j,y)_i\}_{i=1}^{\infty}.
\] For $y\in D$, $j \in \{1,2\}$, let $\pi_{p_j,y}: \Z \rightarrow \Z^2$, be a bijective map given by,
\begin{equation}\label{eq:pi_p_j_y}
    \pi_{p_j,y}(i):=\begin{cases}
(0,0), &\ \text{for}\,\, i = 0;\\
S_{p_j(k_i)} (f)(y), &\ \text{for}\,\, i \geq 1;\\ 
\ell(j,y)_{-i}. &\ \text{for}\,\, i \leq -1.
\end{cases}
\end{equation} For $y \in D$, we define a map, $\pi_y: \Z^2 \rightarrow \Z^2$, by 
\begin{equation}\label{eq:pi_y}
    \pi_{y} = \pi_{p_1,y} \circ \pi^{-1}_{p_2,y}.
\end{equation} Note that $\pi_y$ is a permutation of $\Z^2$ and it satisfies \eqref{eq:pi_y_maps} as needed. For $y\in D$, let $\pi_y$ be as above, we set $\Psi_{\pi_y}: \Sigma \rightarrow \Sigma$ to be the map
\begin{equation}\label{eq:Psi_pi_y}
    \Psi_{\pi_y} (\omega)(i,j):=\begin{cases}
\omega(0,0), &\ \text{for}\,\, (i,j) = (0,0);\\
1-\omega(\pi_y(i,j)) = 1-\omega(S_{p_1(n)} (f)(y)), &\ \text{for}\, (i,j) = S_{p_2(n)} (f)(y), n \in \mathcal{K}_y ;\\
\omega(\pi_y(i,j)), &\  \text{otherwise}.
\end{cases}
\end{equation} Recall we defined $(Z,\mathcal{D},\Tmu) = (Y \times \Sigma, \mathcal{C} \otimes \B(\Sigma), m \times \nu)$. Let $\widetilde{R}: Z \rightarrow Z$ be given by,
\begin{equation}\label{eq:def_R}
   \widetilde{R}(y,\omega) :=\begin{cases}
(y, \Psi_{\pi_y} \omega ), &\ \text{for}\,\, y \in D;\\
(y,\omega), &\ \text{for}\,\, y \in Y\setminus D.
\end{cases}
\end{equation} We finally define, $\TS : Z \rightarrow Z$ as, 
\begin{equation}\label{def:S}
    \TS : = \widetilde{R}^{-1} \circ \TT \circ \widetilde{R}.
\end{equation}
Observe that for $n \in \N$, $y \in D \cap Q^{-n} D$ and $\omega \in \Sigma$, we have
\begin{equation}\label{eq:TS^n_2}
    \TS^n(y, \omega) = (Q^n(y), (\Psi^{-1}_{\pi_{Q^n(y)}} \circ \sigma_{S_n(f)(y)} \circ \Psi_{\pi_y}) (\omega )).
\end{equation}
It follows from an argument similar to \cite[Proposition 4.10]{kosloff_Sanadhya_2024} that the map $\widetilde{R}$ defined in \eqref{eq:def_R} is an invertible measure preserving transformation. Thus $(Z,\mathcal{D},\Tmu,\TS)$ is a  mixing probability preserving system with zero entropy. 

Recall that for $k\geq3$, we set $X = Z^{\otimes k} : = \overbrace{Z \times Z \times\ldots \times Z}^\text{$k$ times}$. We define a $\mu=\tilde{\mu}^{\otimes k}$ preserving system
\[
S : X \rightarrow X, \hspace{10 mm} \mathrm{where} \hspace{2 mm} S = \TS^{\otimes k} : = \overbrace{\TS \times \TS \times\ldots \times \TS}^\text{$k$ times}.
\]
Recall $\B$ is the product Borel sigma algebra $\B = \mathcal{D}^{\otimes k}$. Since $(Z,\mathcal{D},\Tmu,\TS)$ is  mixing and zero entropy, it follows that the system $(X,\mathcal{B},\mu, S)$ is mixing and zero entropy.

Let $x = (y_\ell,\omega_\ell)_{\ell=1}^k \in X = Z^{\otimes k}$, where  $y_\ell \in Y$ and $\omega_\ell \in \Sigma$ for $\ell \in \{1,\ldots,k\}$. For convenience, we will also write 
\[
x = (\overline{y},\overline{\omega}),
\] where $\overline{y} = (y_\ell)_{\ell =1}^k \in Y^{\otimes k}$ and $\overline{\omega} = (\omega_\ell)_{\ell =1}^k \in \Sigma^{\otimes k}$. Recall for $y_\ell \in Y$ and polynomials $p_1,p_2 : \Z \rightarrow \Z$, we defined $\kay_{y_\ell}=K^{(p_1)}(y_\ell)\cap K^{(p_2)}(y_{\ell})$ (see \eqref{eq:def_curly_K_y}), where $K^{(p_i)}(y_\ell)$ are as in \eqref{eq:k_p_y}. 
In what follows for brevity we denote,
\[\kay_{y_\ell} = \kay_\ell,\]
where $\ell \in \{1,\ldots,k\}$. For $n \in \N$ and $\ell \in \{1,\ldots,k\}$,
\begin{equation}\label{def:A_n_ell}
    A_n(\ell) : = \{\overline{y} \in Y^{\otimes l} : \exists t \in \{1,\ldots,\ell\}, n \in \kay_t\}.
\end{equation} In other words for $n \in \N$ and $\ell \in \{1,\ldots,k\}$, 
\begin{equation}\label{def:A_n_ell*}
    A_n(\ell) : = \left\{\overline{y} \in Y^{\otimes l} : \bigcup_{t=1}^\ell \{n \in \kay_t\}\right\}.
\end{equation}

\begin{lemma}\label{lem:A_n_k<1}

Let $p_1,p_2:\Z\to\Z$ be injective polynomials with positive leading coefficient, $p_1(0) = p_2(0) = 0$ and $\deg (p_1),\deg (p_2) \geq 2$. There exists $k \geq 3$ such that
\[
\sum_{n=1}^{\infty} m^{\otimes k}(Y^{\otimes k} \setminus A_n(k))<1. 
\]
\end{lemma}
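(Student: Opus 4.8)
The plan is to estimate the single-copy probability $m(Y\setminus\kay_n^{(1)})$, where $\{n\in\kay_t\}$ is the event appearing in \eqref{def:A_n_ell}, and then exploit the product structure so that the complementary probability decays like a $k$-th power. Concretely, write $q_n := m^{\otimes k}(Y^{\otimes k}\setminus A_n(k))$. Since $A_n(k)=\bigcup_{t=1}^k\{n\in\kay_t\}$ and the coordinates $y_1,\dots,y_k$ are i.i.d., the events $\{n\in\kay_t\}$ are independent with common probability $r_n := m(\{y\in Y:\ n\in\kay_y\})$, so $q_n=(1-r_n)^k$. Thus it suffices to show that $\sum_{n=1}^\infty (1-r_n)^k<1$ for some $k\geq 3$; equivalently, that $1-r_n$ is bounded away from $1$ and tends to $0$ fast enough that a large power is summable with sum below $1$.

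First I would bound $1-r_n$ from above. By definition $\{n\notin\kay_y\}=\{n\notin K^{(p_1)}(y)\}\cup\{n\notin K^{(p_2)}(y)\}$, and by \eqref{eq:k_p_y}, $\{n\notin K^{(p)}(y)\}\subseteq\{S_{p(n)}(f)(y)=(0,0)\}\cup\{S_{p(n)}(f)(y)\in R_{n-1}^{(p)}(y)\}$. The first event is controlled directly by the two-dimensional LCLT (\Cref{thm:LCLT_2}\ref{thm_sec_a:LCLT_2}): $m(S_{p(n)}(f)=(0,0))\leq C/p(n)\leq C'/n^2$ since $\deg p\geq 2$. For the second event I would use the union bound over the at most $n-1$ elements $S_{p(m)}(f)(y)$, $1\leq m\leq n-1$: for each fixed pair $m<n$, $S_{p(n)}(f)(y)-S_{p(m)}(f)(y)$ is (up to sign) an ergodic sum of $f$ over $|p(n)-p(m)|$ steps of $Q$, a quantity which is $\geq c\,(n-m)\,n^{\deg p-1}$ for $m<n$ since $p$ has positive leading coefficient and is injective on the relevant range; hence by the LCLT this has probability $\leq C/(p(n)-p(m))\leq C/((n-m)n)$. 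Summing over $1\leq m\leq n-1$ gives $m(S_{p(n)}(f)(y)\in R_{n-1}^{(p)}(y))\leq C(\log n)/n$. Combining, $1-r_n\leq 2\cdot C\log n/n \to 0$.

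With $1-r_n\leq C_0\log n/n$ for all $n\geq n_0$, and $1-r_n\leq 1$ trivially for $n<n_0$, I would split the sum: $\sum_{n=1}^\infty(1-r_n)^k = \sum_{n<n_0}(1-r_n)^k + \sum_{n\geq n_0}(1-r_n)^k$. For the tail, $(1-r_n)^k\leq (C_0\log n/n)^k$, which for $k\geq 2$ is summable, and — crucially — its sum tends to $0$ as $k\to\infty$ (dominated/monotone convergence, each term $\to 0$). For the finite initial block, each summand is at most $1$, but in fact one should first note $1-r_n<1$ for every fixed $n$ (because, e.g., part \ref{thm_sec_b:LCLT_2} of \Cref{thm:LCLT_2} and \Cref{prop:Range_of_cocycle_along_p} guarantee $m(\{n\in\kay_y\})>0$ for each $n$), so $(1-r_n)^k\to 0$ as $k\to\infty$ for each of the finitely many $n<n_0$. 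Therefore the whole sum $\sum_n(1-r_n)^k\to 0$ as $k\to\infty$; in particular it is $<1$ for all sufficiently large $k$, and we fix such a $k\geq 3$.

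The main obstacle is the second step: getting a quantitative bound on $m(S_{p(n)}(f)(y)\in R_{n-1}^{(p)}(y))$ that is summable. The naive union bound over $n-1$ terms each of size $O(1/n)$ only gives $O(1)$, not $o(1)$, so one must genuinely use that the \emph{differences} $p(n)-p(m)$ grow — this is where $\deg p\geq 2$ and the positive-leading-coefficient reduction are essential — to upgrade each term to $O(1/((n-m)n))$ and recover an extra $\log n/n$ decay. Care is also needed that the LCLT applies to the increment cocycle $S_{p(n)}(f)-S_{p(m)}(f)$, which is a shifted ergodic sum of $f$ of length $p(n)-p(m)$ over the ergodic system $(Y,Q)$ (using $Q$-invariance of $m$ and that $f$ is the fixed LCLT function up to the scaling $f=2g$), so the same bound \eqref{eq:LCLT_bound}-type estimate $C/\sqrt{\text{length}}$ — here in two dimensions, $C/\text{length}$ — holds uniformly in the target value.
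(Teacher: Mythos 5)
Your proposal is correct and shares the paper's skeleton: the product identity $m^{\otimes k}(Y^{\otimes k}\setminus A_n(k))=\left(m(Y\setminus A_n(1))\right)^k$, a quantitative decay estimate to control the tail of the series, and strict positivity of $r_n=m(\{y:\ n\in\kay_y\})$ for each fixed $n$ so that the finite initial block is crushed by taking $k$ large. Where you genuinely diverge is the tail estimate. The paper simply imports the bound $m(Y\setminus A_n(1))\leq \pi c/\sqrt{n}$ from \cite[page 21]{kosloff_Sanadhya_2024} (see \eqref{eq:Bound_An3}), so that $k=3$ already gives a summable tail, and then fixes a threshold $N$ and splits the sum into two halves each below $\tfrac12$. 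You instead rederive a decay bound from scratch: the LCLT of \cref{thm:LCLT_2}.\ref{thm_sec_a:LCLT_2} gives $\sup_j m(S_\ell(f)=j)\leq C/\ell$ for large $\ell$ (the scaling $f=2g$ being harmless), the cocycle identity together with $Q$-invariance identifies the law of $S_{p(n)}(f)-S_{p(m)}(f)$ with that of $S_{p(n)-p(m)}(f)$, and the gap estimate $p(n)-p(m)\gtrsim (n-m)\,n^{\deg p-1}$ (which does hold uniformly in $1\leq m<n$ for $n$ large, e.g.\ by treating $m\leq n/2$ and $m>n/2$ separately, and also guarantees that all gaps exceed the LCLT threshold) upgrades the union bound to $m(Y\setminus A_n(1))=O(\log n/n)$. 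This is a self-contained alternative to the citation, in fact stronger than the imported $1/\sqrt n$ bound; your dominated-convergence choice of $k$ at the end is an equivalent repackaging of the paper's $\tfrac12+\tfrac12$ split. Your union decomposition of $\{n\notin\kay_y\}$ is the correct reading of the definition $\kay_y=K^{(p_1)}(y)\cap K^{(p_2)}(y)$.

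One step should be tightened: the claim $r_n>0$ for every fixed $n$. \Cref{prop:Range_of_cocycle_along_p} (density one of $K^{(p)}(y)$ for almost every $y$) does not by itself give positivity at a \emph{fixed} $n$. What does the job --- and is exactly how the paper argues, and the reason part \ref{thm_sec_b:LCLT_2} of \cref{thm:LCLT_2} is proved at all --- is the containment: with $M=\max\{|p_i(j)|:\ i\in\{1,2\},\ 1\leq j\leq n\}$, on the event $\left|\{S_j(f):\ -M\leq j\leq M\}\right|=2M+1$ all these cocycle values are distinct, so injectivity of $p_1,p_2$ and $p_i(n)\neq 0$ force $n\in K^{(p_1)}(y)\cap K^{(p_2)}(y)$; part \ref{thm_sec_b:LCLT_2} then yields $r_n>0$. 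Spelling this out (rather than gesturing at \cref{prop:Range_of_cocycle_along_p}) completes your argument.
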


\begin{proof} Let $\ell \in \N$ and $n\in \N$. The set $Y^{\otimes (\ell+1)} \setminus A_n(\ell)$ is an intersection of $\ell$ independent events, each of which if of measure $m(Y\setminus A_n(1))$. Consequently, for every $\ell, n\in \N$,
\begin{equation}\label{eq:product_A_n}
    m^{\otimes \ell}(Y^{\otimes \ell} \setminus A_n(\ell)) =\left(m(Y\setminus A_n(1))\right)^\ell.
\end{equation}
It is shown in \cite[page 21]{kosloff_Sanadhya_2024} that there exists $M \in \N$ such that for $n > M$, we have
\begin{equation}\label{eq:Bound_An3}
   m(Y \setminus A_n(1)) \leq \dfrac{\pi c}{\sqrt{n}},
\end{equation}
for some constant $c \in \N$. Hence 
\begin{equation}\label{eq:A_n_3}
\sum_{n=1}^{\infty} m^{\otimes 3}(Y^{\otimes 3} \setminus A_n(3))< (\pi c)^3 \sum_{n=1}^{\infty} \dfrac{1}{n^{3/2}} < \infty. 
\end{equation}
Equation \eqref{eq:product_A_n} implies that for all $\ell\geq 3$, 
\begin{equation}\label{eq:A_n_ell_+1} 
 m^{\otimes \ell}(Y^{\otimes \ell} \setminus A_n(\ell)) \leq  m^{\otimes 3}(Y^{\otimes 3} \setminus A_n(3)).     
\end{equation}
It follows from \eqref{eq:A_n_3} and \eqref{eq:A_n_ell_+1} that there exists a $N \in \N$, such that for all $k\geq 3$,
\[
\sum_{n=N}^{\infty} m^{\otimes k}(Y^{\otimes k} \setminus A_n(k)) < \dfrac{1}{2}. 
\] Thus we have,
\[
\sum_{n=1}^{\infty} m^{\otimes k}(Y^{\otimes k} \setminus A_n(k)) < \sum_{n=1}^{N-1} m^{\otimes k}(Y^{\otimes k} \setminus A_n(k)) + \frac{1}{2}.
\] 
It remains to show that for some large enough $k$, 
\[
\sum_{n=1}^{N-1} m^{\otimes k}(Y^{\otimes k} \setminus A_n(k))\leq \frac{1}{2}.
\]
The first step towards this goal is to show that for all $n\in \N$, $m(Y\setminus A_n(1)) < 1$. To see this observe that for $n \in \N$, 
\begin{equation}\label{eq: ooooffff}
m(Y\setminus A_n(1)) = m \left(\left\{S_{p_1(n)}(f)\in R_{n-1}^{\left(p_1\right)}\cup \{(0,0)\}\right\}\bigcap \left\{S_{p_2(n)}(f)\in R_{n-1}^{\left(p_2\right)}\cup\{(0,0)\}\right\}\right). 
\end{equation}
Define $M=\max\{\left|p_i(k)\right|:\ i\in\{1,2\},\ 1\leq k\leq n\}$ and note that if one of the two conditions in the right hand side happen then 
\[
\left|\left\{S_j(f):\ -M\leq j\leq M\right\}\right|<2M+1.
\]
We deduce from this and \cref{thm:LCLT_2}.\ref{thm_sec_b:LCLT_2} that for every $n \in \N$,
\begin{align*}
m(Y\setminus A_n(1))&\leq m\left(\left|\left\{S_j(f):\ -M\leq j\leq M\right\}\right|<2M+1\right)\\
&=1-m\left(\left|\left\{S_j(f):\ -M\leq j\leq M\right\}\right|=2M+1\right)<1.
\end{align*}
This shows that  there exists $a<1$ such that for all $1\leq n\leq N-1$, 
\[
m(Y\setminus A_n(1)) < a
\]
We conclude that there exists a large enough $k \in \N$, such that
\begin{align*}
\sum_{n=1}^{N-1} m^{\otimes k}(Y^{\otimes k} \setminus A_n(k))&=\sum_{n=1}^{N-1} m(Y \setminus A_n(1))^k\\
&\leq Na^k< \frac{1}{2},
\end{align*}
which proves the claim. 
\end{proof}

\begin{prop}\label{prop:m^k(barC)>0}
There exists $k\geq 3$ such that
\[
m^{\otimes k} \left(\left\{\overline{y} \in Y^{\otimes k} : \bigcap_{n=1}^{\infty} \bigcup_{t=1}^k \{n \in \kay_t\}\right\}\right) >0.
\]   
\end{prop}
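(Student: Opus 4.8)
The set appearing in the statement is exactly $\bigcap_{n=1}^{\infty} A_n(k)$, where $A_n(k)$ is as in \eqref{def:A_n_ell*}. So the plan is simply to take $k\geq 3$ to be the integer furnished by \Cref{lem:A_n_k<1}, and then to pass to complements and use countable subadditivity of $m^{\otimes k}$. Concretely, by \Cref{lem:A_n_k<1} we have $\sum_{n=1}^\infty m^{\otimes k}(Y^{\otimes k}\setminus A_n(k))<1$, hence
\[
m^{\otimes k}\!\left(\bigcup_{n=1}^\infty\bigl(Y^{\otimes k}\setminus A_n(k)\bigr)\right)\leq \sum_{n=1}^\infty m^{\otimes k}\bigl(Y^{\otimes k}\setminus A_n(k)\bigr)<1,
\]
and therefore $m^{\otimes k}\bigl(\bigcap_{n=1}^\infty A_n(k)\bigr)=1-m^{\otimes k}\bigl(\bigcup_{n=1}^\infty(Y^{\otimes k}\setminus A_n(k))\bigr)>0$, which is the desired conclusion.

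There is essentially no obstacle here: the content has already been absorbed into \Cref{lem:A_n_k<1} (and, before that, into the LCLT estimate \eqref{eq:Bound_An3} and the non-degeneracy statement \Cref{thm:LCLT_2}\ref{thm_sec_b:LCLT_2}). The only point worth spelling out in the write-up is the identification of the event $\bigl\{\overline{y}\in Y^{\otimes k}:\bigcap_{n}\bigcup_{t=1}^k\{n\in\kay_t\}\bigr\}$ with $\bigcap_{n=1}^\infty A_n(k)$, which is immediate from the definition \eqref{def:A_n_ell*}, after which the Borel--Cantelli-type complement argument above closes the proof. If one wants, one can also remark that this $k$ is the same one that will be used to define the system $S=\TS^{\otimes k}$, so no further choice of $k$ is needed downstream.
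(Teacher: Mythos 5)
Your proposal is correct and follows the same argument as the paper: identify the set with $\bigcap_{n=1}^{\infty}A_n(k)$, pass to complements, and use countable subadditivity together with \Cref{lem:A_n_k<1} to bound the measure of the complement strictly below $1$. Nothing further is needed.
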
 

\begin{proof} For a set $B \subset Y^{\otimes k}$, we denote by $B^C = Y^{\otimes k} \setminus B$. Note that
\[
\left\{\overline{y} : \bigcap_{n=1}^{\infty} \bigcup_{t=1}^k \{n \in \kay_t\}\right\}^C = \left\{\overline{y} : \bigcup_{n=1}^{\infty} \bigcap_{t=1}^k \{n \notin \kay_t\}\right\} = \left\{\overline{y} : \bigcup_{n=1}^{\infty} (Y^{\otimes k} \setminus A_n(k))\right\}. 
\]
Since
\[
m^{\otimes k} \left(\left\{\overline{y} : \bigcup_{n=1}^{\infty} (Y^{\otimes k} \setminus A_n(k))\right\}\right) < \sum_{n=1}^{\infty} m^{\otimes k}(Y^{\otimes k} \setminus A_n(k)),
\]
the claim follows from \Cref{lem:A_n_k<1}.
\end{proof}
\ignore{
\begin{proof}[Proof of \Cref{thm:recurrence_1}] Denote by $\overline{C} : = \left\{\overline{y} \in Y^{\otimes k} : \bigcap_{n=1}^{\infty} \bigcup_{t=1}^k \{n \in \kay_t\}\right\} \subset Y^{\otimes k}$. In other words
\[
\overline{C} : = \left\{\overline{y} \in Y^{\otimes k} :\,\mathrm{for}\,\,\mathrm{all}\,\,n\in\N,\, \,\mathrm{there}\,\,\mathrm{exists}\, t(n) \in \{1,\ldots,k\},\, \,\mathrm{such}\,\mathrm{that}\,\, n \in \kay_{t(n)}\right\}. 
\] 
We set $A = \overline{C} \times [0]_{(0,0)}^{\otimes k} \subset X$. Then $\mu(A) > 0$, by \Cref{prop:m^k(barC)>0}. For $n \in \N$, let $C(n)$ be the projection of $\overline{C}$ on the $t(n)$-th coordinate. Similarly we denote by $A(n) : = C(n) \times [0]_{(0,0)}$  the projection of $A$ on the $t(n)$-th coordinate. Recall that the set $D \subset Y$ is the full $m$-measure set such that for $y \in Y$ the permutation $\pi_y: \Z^2 \rightarrow \Z^2$ (see \eqref{eq:pi_y}) is well defined. Thus $D^{\otimes k} \subset Y^{\otimes k}$ is a full $m^{\otimes k}$-measure set. We consider the set
\[
B(n) : = \left(D^{\otimes k}\times \Sigma^{\otimes k}\right) \cap T^{-p_1(n)}A\cap S^{-p_2(n)}A
\]
Observe that
\begin{align*}
    \mu(B(n)) &= \mu\left(\left(D^{\otimes k}\times \Sigma^{\otimes k}\right) \cap T^{-p_1(n)}A\cap S^{-p_2(n)}A\right)\\ 
    &\leq \Tmu \left((D\times \Sigma)\cap \TT^{-p_1(n)}(A(n))\cap \TS^{-p_2(n)}(A(n))\right)\\
    &= \Tmu \left((D\times \Sigma)\cap\TT^{-p_1(n)}(C(n) \times [0]_{0,0}), \TS^{-p_2(n)}(C(n) \times [0]_{0,0})\right)\\
   &= \Tmu\left(\{(y,\omega) \in Z : y \in D,\,\mathrm{and}\,\, \TT^{p_1(n)} (y,\omega), \TS^{p_2(n)} (y,\omega) \in C(n) \times [0]_{0,0})\}\right).
\end{align*}
Using the definitions of $\TT^{p_1(n)}$ and $\TS^{p_2(n)}$ (see \eqref{eq:TT^n_2} and \eqref{eq:TS^n_2}), we get
\begin{align*}
    &\{(y,\omega) \in Z : y \in D,\,\mathrm{and}\,\, \TT^{p_1(n)} (y,\omega), \TS^{p_2(n)} (y,\omega) \in C(n) \times [0]_{0,0})\} = \\
     &\{(y,\omega) \in Z: y\in D, \sigma_{S_{p_1(n)}(f)(y)} (\omega) (0,0) = 0,\,\textrm{and}\, (\Psi^{-1}_{\pi_{Q^n(y)}} \circ \sigma_{S_{p_2(n)}(f)(y)} \circ \Psi_{\pi_y}) (\omega ) (0,0) = 0 \}.
\end{align*}
Observe that for $(y',\omega') \in D \times \Sigma$, we have $(\Psi^{-1}_{\pi_{y'}}\omega') (0,0) = \omega' (0,0)$. Thus we get,
\begin{align}
&\mu(B(n))\leq \Tmu\left(\{(y,\omega) \in Z: y \in D, \sigma_{S_{p_1(n)}(f)(y)} (\omega) (0,0) = 0,\,\textrm{and}\, ( \sigma_{S_{p_2(n)}(f)(y)} \circ \Psi_{\pi_y}) (\omega ) (0,0) = 0 \}\right)\nonumber\\
&=\Tmu\left( \{(y,\omega) \in X: y \in D,  (\omega) (S_{p_1(n)}(f)(y)) = 0,\,\textrm{and}\, (\Psi_{\pi_y} \omega ) (S_{p_2(n)}(f)(y)) = 0 \}\right).\label{eq:def_set_B(n)}
\end{align}
Observe that \eqref{eq:Psi_pi_y} implies that for $(y,\omega) \in D \times \Sigma$ and $n\in \mathcal{K}_{t(n)}$,
\[
    (\Psi_{\pi_y} \omega ) (S_{p_2(n)}(f)(y))=
1- \omega(S_{p_1(n)}(f)(y)).
\]
Since both $\omega(S_{p_1(n)}(f)(y))$ and $1- \omega(S_{p_1(n)}(f)(y))$ cannot be $0$ simultaneously we get, $\mu(B(n)) = 0$. Since $\mu(A\cap T^{-p_1(n)}A\cap S^{-p_2(n)}A) \leq \mu(B(n))$, we get that
\[
\mu(A\cap T^{-p_1(n)}A\cap S^{-p_2(n)}A) = 0.
\]
\end{proof}
}
\begin{proof}[Proof of \Cref{thm:recurrence_1}] Recall that the set $D \subset Y$ is the full measure set such that for $y \in Y$ the permutation $\pi_y: \Z^2 \rightarrow \Z^2$ (see \eqref{eq:pi_y}) is well defined. Let $(y,\omega) \in D \times \Sigma$, we claim that if $n \in \mathcal{K}_y$, (see \eqref{eq:def_curly_K_y}) then either $\TT^{p_1(n)} (y,\omega) \notin D \times [0]_{0,0} $ or $\TS^{p_2(n)} (y,\omega) \notin D \times [0]_{0,0}$. To see this, note that by \eqref{eq:TS^n_2}, $\TS^{p_2(n)} (y,\omega) \in D \times [0]_{0,0}$ implies that
\[
(\Psi^{-1}_{\pi_{Q^n(y)}} \circ \sigma_{S_{p_2(n)}(f)(y)} \circ \Psi_{\pi_y}) (\omega ) (0,0) = 0.
\]
If $(y',\omega') \in D \times \Sigma$, then $(\Psi^{-1}_{\pi_{y'}}\omega') (0,0) = \omega' (0,0)$. Thus we get,
\[
( \sigma_{S_{p_2(n)}(f)(y)} \circ \Psi_{\pi_y}) (\omega ) (0,0) = 0.
\] In other words
\[
(\Psi_{\pi_y} \omega ) (S_{p_2(n)}(f)(y)) = 0.
\] 
Finally \eqref{eq:Psi_pi_y} implies that for $(y,\omega) \in D \times \Sigma$ and $n\in \mathcal{K}_{y}$,
\begin{equation}\label{eq:contra}
   (\Psi_{\pi_y} \omega ) (S_{p_2(n)}(f)(y))=
1- \omega(S_{p_1(n)}(f)(y)) = 0. 
\end{equation}
On the other hand if $\TT^{p_1(n)} (y,\omega) \in D \times [0]_{0,0} $ then 
\[
\omega(S_{p_1(n)}(f)(y))=\sigma_{S_{p_1(n)}(f)(y)} (\omega) (0,0) = 0.
\]
(see \eqref{eq:TT^n_2}) which contradicts \eqref{eq:contra}. Now we define $\overline{C} \subset Y^{\otimes k}$ as follows :
\[
\overline{C} : = \left\{\overline{y} \in Y^{\otimes k} :\,\mathrm{for}\,\,\mathrm{all}\,\,n\in\N,\, \,\mathrm{there}\,\,\mathrm{exists}\, t \in \{1,\ldots,k\},\, \,\mathrm{such}\,\mathrm{that}\,\, n \in \kay_{t}\right\} \cap D^{\otimes k}. 
\]
Since $D^{\otimes k} \subset Y^{\otimes k}$ is a full $m^{\otimes k}$-measure set, it follows from \Cref{prop:m^k(barC)>0} that $m^{\otimes k}(\overline{C})>0$. Let $A = \overline{C} \times [0]_{(0,0)}^{\otimes k} \subset X$ and $B = D^{\otimes k} \times [0]_{(0,0)}^{\otimes k} \subset X$. Then $\mu(A) > 0$ and $A \subset B$. Observe that for all $n \in \N$,
\begin{align*}
    &A \cap T^{-p_1(n)}A\cap S^{-p_2(n)}A =  A \cap T^{-p_1(n)}B\cap S^{-p_2(n)}B\\
    & \subset \left\{ (y_\ell,\omega_\ell)_{\ell=1}^k \in X :  \bigcup_{t=1}^k \{n \in \kay_t \,\, \mathrm{and}\,\, \TT^{p_1(n)}(y_t,\omega_t), \TS^{p_1(n)}(y_t,\omega_t) \in D \times [0]_{0,0} \} \right\} = \emptyset.
\end{align*}
    
\end{proof}

\subsection{Proof of \Cref{thm:LCLT_2} part \ref{thm_sec_b:LCLT_2}}\label{sub:LCLT revisit}

For $k \in \N$, let
\[
p_k:=\begin{cases}
2^k, & k\ \text{even},\\
2^k+1, &\ k\ \text{odd},
\end{cases}
\] and
$d_k:=2^{k^2}$. Similarly, let $\alpha_1=\frac{1}{2}$ and $\alpha_k:=\frac{1}{p_k\sqrt{k\log(k)}}$ for $k\geq 2$.

The construction of a $2$-dimensional LCLT function in \cite{kosloff_Sanadhya_2024} relies on the following proposition.

\begin{prop}\cite[Proposition 2.1. for $D=2$]{kosloff_Sanadhya_2024}\label{prop:functions_defn}
Let $(\Omega,\mathcal{F},m,T)$ be an ergodic and aperiodic probability preserving system. There exists $\bar{f}_k^{(i)}:\Omega\to \{-1,0,1\}$, $k\in\mathbb{N},i\in\{1,2\}$ such that:
\begin{enumerate}[label=(\alph*)]
\item\label{prop_sec_a:functions_defn} For all $k\in\mathbb{N}$ and $i\in\{1,2\}$, 
\[
m\left(\bar{f}_k^{(i)}=1\right)=m\left(\bar{f}_k^{(i)}=-1\right)=\frac{\alpha_k^2}{2}.
\]
\item\label{prop_sec_b:functions_defn} For every $k\in\N$, the functions $\left\{\bar{f}_k^{(i)}\circ T^j:\ 0\leq j\leq 2d_k+p_k,\ i\in\{1,2\}\right\}$ are i.i.d. 
\item\label{prop_sec_c:functions_defn} For every $k\in\N$, the functions $\left\{\bar{f}_k^{(i)}\circ T^j:\ 0\leq j\leq 2d_k+p_k,\ 1\leq i\in\{1,2\}\right\}$ are independent of 
\[
\mathcal{A}_k=\left\{\bar{f}_l^{(i)}\circ T^j:\ 1\leq l<k, i\in\{1,2\},\ 0\leq j\leq 2d_k+p_k\right\}. 
 \]
\end{enumerate}
\end{prop}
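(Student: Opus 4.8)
I would construct the functions $\bar f_k^{(i)}$ by induction on $k$, building the $k$-th level inside a Rokhlin (or Alpern) tower of very rapidly increasing height and reducing the three properties to a sequence of ``independent complement'' choices in the non-atomic space $(\Omega,\mathcal F,m)$ (non-atomicity comes from aperiodicity of $T$). Write $q_k$ for the law on $\{-1,0,1\}$ with $q_k(1)=q_k(-1)=\alpha_k^2/2$ and $q_k(0)=1-\alpha_k^2$, and $\mu_k:=q_k\otimes q_k$ on $\{-1,0,1\}^2$. Assume $\bar f_l^{(i)}$ have been defined for all $l<k$; since these are finitely many $\{-1,0,1\}$-valued functions, the family $\mathcal A_k$ appearing in part \ref{prop_sec_c:functions_defn} generates a \emph{finite} partition $\mathcal C_k$ of $\Omega$ (with at most $3^{2(k-1)(2d_k+p_k+1)}$ atoms). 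The task at step $k$ is to produce $\bar f_k^{(1)},\bar f_k^{(2)}$ so that the $2(2d_k+p_k+1)$ functions $\{\bar f_k^{(i)}\circ T^j:0\le j\le 2d_k+p_k,\ i\in\{1,2\}\}$ are jointly independent with common law $q_k$, and so that this family is moreover independent of $\mathcal C_k$.

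At step $k$ I would first fix a height $h_k$ enormous compared with $2d_k+p_k$ and with the combinatorial data coming from levels $<k$, and use Rokhlin's lemma — or, to make things exact, an Alpern castle of heights $h_k$ and $h_k+1$ — to obtain an (essentially) exact tower with base $F_k$. Partition $F_k$ into pieces indexed by words $w\in(\{-1,0,1\}^2)^{h_k}$ with product weights $\prod_{j=0}^{h_k-1}\mu_k(w_j)$, and — the point securing property \ref{prop_sec_c:functions_defn} — choose this partition to be an independent complement, within $F_k$, of the finite partition that records the level-$<k$ data seen over the relevant window together with the column structure of the castle; moreover arrange (a second, height-$2$ Rokhlin tower for the first return map to the base, plus another independent-complement choice) that consecutive columns carry mutually independent words. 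Then set $\bar f_k^{(i)}(T^jx):=w_j^{(i)}$ for $x$ in the $w$-piece and $0\le j\le h_k-1$, and on the residual error set assign a fresh product labelling with each coordinate distributed as $q_k$ (or, with the Alpern castle, choose the labellings of the two column types independently so that there is no error set).

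Property \ref{prop_sec_a:functions_defn} is then a one-line computation: summing the per-level marginals over the $h_k$ (resp.\ $h_k+1$) levels of each column and using $T$-invariance of $m$ gives $m(\bar f_k^{(i)}=\pm1)=\alpha_k^2/2$ exactly. For property \ref{prop_sec_b:functions_defn}, a window $\{T^jx:0\le j\le 2d_k+p_k\}$ meets at most two consecutive columns because $h_k\gg 2d_k+p_k$; within a single column the values $\bar f_k^{(i)}(T^jx)$ read off distinct coordinates of one product-distributed word and are thus i.i.d.\ $q_k$, while for a window straddling two columns one uses that the two column words are independent, so a terminal segment of one i.i.d.\ word followed by an initial segment of an independent i.i.d.\ word is again i.i.d. For property \ref{prop_sec_c:functions_defn}, the values seen over the window form a sub-collection of the column word(s), which were chosen as an independent complement of (the part of the space carrying) $\mathcal C_k$; since the level-$<k$ functions were fixed before these words, the independence survives and also passes through the straddling case.

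The main obstacle is exactness — getting genuine, not approximate, independence. This forces one to handle (i) the Rokhlin error set, either by a compensating product labelling on it or by passing to an Alpern castle, and (ii) orbit windows crossing a column boundary, which is precisely why the words of distinct columns must be made mutually independent. Producing a single partition of $F_k$ that is simultaneously product-distributed in its $h_k$ slots, an independent complement of the finite algebra $\mathcal A_k$, and compatible with consecutive-column independence — uniformly enough over the tower levels that every shifted window comes out jointly independent of the whole of $\mathcal C_k$ — is the technical heart; it is where one exploits that $h_k$ may be taken so large that, up to the negligible error, the atoms of $\mathcal A_k$ are unions of column-intervals of the level-$k$ tower and hence are automatically independent of the freshly and independently chosen column words.
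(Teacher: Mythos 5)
First, a point of reference: the paper itself does not prove this proposition; it is imported verbatim from \cite{kosloff_Sanadhya_2024} (Proposition 2.1 there, for $D=2$), so what you have written is an attempt at the cited construction. Your overall strategy --- induction on $k$, an Alpern castle of height $h_k\gg 2d_k+p_k$ at stage $k$, and a word-valued labelling of the base with law $(q_k\otimes q_k)^{\otimes h_k}$ chosen as an independent complement of the finite partition generated by the level-$<k$ data and the castle structure --- is the right family of ideas, and it does handle the exact marginals in part (a) and the cases of parts (b) and (c) in which the window $\{T^jx: 0\le j\le 2d_k+p_k\}$ stays inside a single column.

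The genuine gap is exactness for windows that straddle two consecutive columns. You propose to make the words of consecutive columns mutually independent via ``a second, height-$2$ Rokhlin tower for the first return map to the base, plus another independent-complement choice,'' but this is the same problem one scale down: a height-$2$ Rokhlin tower for the induced map $T_{F_k}$ leaves an error set and a top level on which the pair (word of a column, word of the next column) is uncontrolled, and replacing it by an Alpern castle for $T_{F_k}$ merely pushes the boundary problem to the columns of that castle. Since the proposition asserts exact joint independence (and exact marginals), ``negligible error'' is not available, and what you need is a label function $W$ on $F_k$ with prescribed product law such that $W$ and $W\circ T_{F_k}$ are exactly independent, jointly with independence from the level-$<k$ data --- which is an instance of the very statement being proved (window length $2$, for the induced transformation), so the induction does not close without a new input. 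Relatedly, your closing claim that for $h_k$ large the atoms of $\mathcal{A}_k$ are ``up to negligible error'' unions of column-intervals of the new tower, and hence automatically independent of the fresh words, is unjustified (the lower-scale functions are in no way measurable with respect to the new castle) and in any case insufficient for the exact statement; the independent-complement choice you made earlier is what should carry this, but it, too, must be shown to be compatible, exactly and simultaneously, with the consecutive-column requirement in the straddling case. This boundary/exactness issue is precisely the technical heart resolved in \cite{kosloff_Sanadhya_2024}, and as written your sketch does not resolve it.
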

With this proposition at hand, for $i\in \{1,2\}$, we define
\[
f^{(i)}_k:=\sum_{j=0}^{p_k-1}\left(\bar{f}^{(i)}_k\circ T^j-\bar{f}^{(i)}_k\circ T^{d_k+j}\right).
\]
We denote by $f^{(i)}:=\sum_{k=0}^\infty f^{(i)}_k$ and set $f_k:=\left(f_k^{(1)},f_k^{(2)}\right):\Omega \to \Z^2$. Finally we denote 
\[
f:=\sum_{k=0}^\infty f_k = \left(\sum_{k=0}^\infty f^{(1)}_k, \sum_{k=0}^\infty f^{(2)}_k \right) = (f^{(1)},f^{(2)}).
\]

\begin{thm*}\cite[Theorem 2.4.]{kosloff_Sanadhya_2024} The function $f:\Omega \to\mathbb{Z}^2$ is well defined and it satisfies the local limit theorem with $\sigma^2 := 2(\mathrm{ln}\, 2)^2 $. That is,
\[
\sup_{j\in\Z^2} n\left|m\left(S_n(f)=j\right)-\frac{1}{(2\pi n\sigma^2)}e^{-\frac{\|j\|^2}{2n\sigma^2}}\right|\xrightarrow[n\to\infty]{}0.
\]
\end{thm*}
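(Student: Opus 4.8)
The plan is to establish this local limit theorem by a multi-scale characteristic function analysis. Keeping the notation above, one has $S_n(f)=\sum_{k\ge 0}S_n(f_k)$ with $S_n(f_k)\in\Z^2$, $p_k\asymp 2^k$, $d_k=2^{k^2}$ and $\alpha_k^2p_k^2=(k\log_2 k)^{-1}$ for $k\ge 2$. The first step is well-definedness: since $f_k^{(i)}$ depends only on $\{\bar{f}_k^{(i)}\circ T^j:\ 0\le j<p_k\}\cup\{\bar{f}_k^{(i)}\circ T^j:\ d_k\le j<d_k+p_k\}$, property \ref{prop_sec_b:functions_defn} of \Cref{prop:functions_defn} gives $m(f_k\neq 0)\le 4p_k\alpha_k^2=O(2^{-k})$, which is summable, so by Borel--Cantelli $f=\sum_k f_k$ is a well-defined $\Z^2$-valued function, while property \ref{prop_sec_c:functions_defn} (together with $\int f_k\,dm=0$) makes the $f_k^{(i)}$ pairwise orthogonal, so the series also converges in $L^2(m)$. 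Next I would pass to Fourier space: identifying $\widehat{\Z^2}$ with $\T^2\cong[-\tfrac12,\tfrac12)^2$ and writing $\varphi_n(t)=\int_\Omega e^{2\pi i\langle t,S_n(f)\rangle}\,dm$, Fourier inversion gives $m(S_n(f)=j)=\int_{\T^2}\varphi_n(t)e^{-2\pi i\langle t,j\rangle}\,dt$ and $\tfrac1{2\pi n\sigma^2}e^{-\|j\|^2/2n\sigma^2}=\int_{\R^2}e^{-2\pi^2 n\sigma^2\|t\|^2}e^{-2\pi i\langle t,j\rangle}\,dt$; since the Gaussian carries mass $O(e^{-cn})$ outside $\T^2$, the theorem reduces to showing
\begin{equation}\label{eq:sketch-goal}
n\int_{\T^2}\bigl|\varphi_n(t)-e^{-2\pi^2 n\sigma^2\|t\|^2}\bigr|\,dt\xrightarrow[n\to\infty]{}0 .
\end{equation}

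To analyse $\varphi_n$ I would fix $n$ and set $k_0=k_0(n)=\min\{k:\ d_k\ge n\}\asymp\sqrt{\log_2 n}$. For $k\ge k_0$ all iterates appearing in $S_n(f_k)$ lie in the window $[0,2d_k+p_k]$, so by properties \ref{prop_sec_b:functions_defn}--\ref{prop_sec_c:functions_defn} the blocks $\{S_n(f_k)\}_{k\ge k_0}$ are mutually independent, also independent of $\{S_n(f_k)\}_{k<k_0}$, and each is a sum of independent $\{-1,0,1\}^2$-valued increments: with $c_r=c_r^{(n,k)}=\#\{(l,j):\ 0\le l<n,\ 0\le j<p_k,\ l+j=r\}$ (a trapezoid of height $\min(n,p_k)$ on $0\le r\le n+p_k-2$), one has $S_n(f_k)^{(i)}=\sum_r c_r(\bar{f}_k^{(i)}\circ T^{r}-\bar{f}_k^{(i)}\circ T^{d_k+r})$, hence $E[e^{2\pi i\langle t,S_n(f_k)\rangle}]=\prod_{i=1}^2\prod_{r}(1-\alpha_k^2(1-\cos 2\pi t_i c_r))^2=:\varphi_{n,k}(t)$. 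Letting $k_1=k_1(n)$ be the largest $k$ with $p_k\le\sqrt n/\omega_n$ for a slowly growing $\omega_n$ (chosen below), one gets $\varphi_n=\rho_n\prod_{k_0\le k\le k_1}\varphi_{n,k}$, where $\rho_n$, the characteristic function of $S_n(f)-\sum_{k_0\le k\le k_1}S_n(f_k)$, satisfies $|\rho_n|\le1$ and $|\rho_n(t)-1|\le2\pi\|t\|\,\|S_n(f)-\sum_{k_0\le k\le k_1}S_n(f_k)\|_2$, while for the complementary region I will use only $|\varphi_n(t)|\le\prod_{k\ge k_0}|\varphi_{n,k}(t)|$. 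A Parseval computation gives $\sum_r c_r^2=\min(n,p_k)^2\max(n,p_k)(1+o(1))$, whence $\mathrm{Var}(S_n(f_k)^{(i)})=2\alpha_k^2\sum_r c_r^2=\tfrac{2n}{k\log_2 k}(1+o(1))$ for $p_k\le n$; summing over $k$ — the ranges $d_k<n$ and $p_k>\sqrt n$ contributing only $o(n)$, and $\sum_{\sqrt{\log_2 n}\le k\le\frac12\log_2 n}\tfrac{2n}{k\log_2 k}=n\sigma^2(1+o(1))$ with $\sigma^2=2(\ln 2)^2$ — yields $\mathrm{Var}(S_n(f)^{(i)})=n\sigma^2(1+o(1))$, the two coordinates being i.i.d., and in particular $\|S_n(f)-\sum_{k_0\le k\le k_1}S_n(f_k)\|_2=o(\sqrt n)$.

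The central region is then straightforward. Choosing $\delta_n\to\infty$ slowly and $\omega_n\to\infty$ with $\delta_n/\omega_n\to0$, on $\{\|t\|\le\delta_n/\sqrt n\}$ every block $k_0\le k\le k_1$ has $|t_i|c_r\le|t_i|p_k\le\delta_n/\omega_n\to0$, so the expansion $\log(1-\alpha_k^2(1-\cos 2\pi t_ic_r))=-2\pi^2\alpha_k^2 t_i^2 c_r^2+O(\alpha_k^2 t_i^4c_r^4)$, summed over $r,i,k$ and combined with the variance computation, gives $\prod_{k_0\le k\le k_1}\varphi_{n,k}(t)=e^{-2\pi^2 n\sigma^2\|t\|^2(1+o(1))}(1+o(1))$; since $\rho_n=1+o(1)$ here, the substitution $t=s/\sqrt n$ shows the part of \eqref{eq:sketch-goal} over the central region tends to $0$. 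The crux, and the step I expect to be the main obstacle, is the complementary estimate $n\int_{\delta_n/\sqrt n\le\|t\|_\infty\le1/2}|\varphi_n(t)|\,dt\to0$. Using $1-x\le e^{-x}$, and taking a coordinate $t_1$ with $|t_1|=\|t\|_\infty$,
\[
|\varphi_n(t)|\le\exp\Bigl(-2\sum_{k\ge k_0}\alpha_k^2\sum_{r=0}^{n+p_k-2}(1-\cos 2\pi t_1 c_r)\Bigr),
\]
and the plateau of $c_r^{(n,k)}$ (height $p_k$, length $\asymp n$ for $p_k\le n$) contributes $\alpha_k^2(n-p_k)(1-\cos 2\pi t_1 p_k)\asymp\tfrac n{p_k^2 k\log k}(1-\cos2\pi t_1p_k)$ to the exponent. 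Here the arithmetic of the scales enters decisively: the choice $p_k=2^k$ ($k$ even), $p_k=2^k+1$ ($k$ odd) makes $\varepsilon_k:=p_{k+1}-2p_k\in\{1,-2\}$ \emph{nonzero}, so from $t_1p_{k+1}\equiv 2t_1p_k+\varepsilon_kt_1\pmod1$ one gets, for every even $k$, $\max(\|t_1p_k\|,\|t_1p_{k+1}\|)\ge\tfrac14\|t_1\|$ (distance to $\Z$); hence at least half the blocks $k\in[k_0,\tfrac12\log_2 n]$ are ``good'', meaning $1-\cos2\pi t_1p_k\gtrsim\|t_1\|^2$. I would then split the non-central region. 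If $\|t_1\|\ge 2^{-C\sqrt{\log_2 n}}$ (for a suitable constant $C$), one of the first two good blocks already contributes $\gtrsim n\|t_1\|^2/p_{k_0}^2\ge n\,2^{-O(\sqrt{\log_2 n})}=n^{1-o(1)}$ to the exponent, so $|\varphi_n(t)|\le e^{-n^{1-o(1)}}$ and $n\int|\varphi_n|\le ne^{-n^{1-o(1)}}\to0$ there. If $\delta_n/\sqrt n\le\|t_1\|\le 2^{-C\sqrt{\log_2 n}}$, then $\log_2(1/\|t_1\|)$ is $\Theta(\log_2 n)$, so the blocks $k_0\le k\le\log_2(1/(\omega_n'\|t_1\|))$ all have $p_k\le\sqrt n$ and $|t_1|p_k\to0$, their Gaussian-type contributions sum to $\pi^2 t_1^2\sum_k\tfrac{2n}{k\log_2 k}=2\pi^2 n\sigma^2\|t_1\|^2(1+o(1))$ (because the $\ln\ln$-slowly-varying sum over $k\in[\sqrt{\log_2 n},\ \Theta(\log_2 n)]$ still recovers essentially the full variance), whence $|\varphi_n(t)|\le e^{-2\pi^2 n\sigma^2\|t_1\|^2(1+o(1))}$; integrating this genuine Gaussian bound over $\{\|t\|_\infty\ge\delta_n/\sqrt n\}$ and multiplying by $n$ gives $O(e^{-c\delta_n^2})\to0$. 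Together with the negligible Gaussian tail this yields \eqref{eq:sketch-goal}.

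The hardest part is thus the last paragraph: producing a lower bound for the ``spreading'' quantity $\sum_{k}\alpha_k^2\sum_r(1-\cos 2\pi t_1 c_r)$ that is uniform in $t_1$ and quantitatively strong enough to beat $\int_{\T^2}\tfrac1n$ — this is exactly what upgrades the central-limit behaviour of the previous step to a \emph{local} limit theorem, and it is where the precise lacunary choice of $p_k$, $d_k$ and $\alpha_k$ (in particular the parity correction in $p_k$) is genuinely used.
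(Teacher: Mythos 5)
This statement is not proved in the present paper at all: it is quoted verbatim from the authors' earlier work \cite{kosloff_Sanadhya_2024}, and \cref{sub:LCLT revisit} only revisits the \emph{construction} in order to prove the new part \ref{thm_sec_b:LCLT_2}. So there is no in-paper argument to compare yours against; I can only assess your sketch on its own terms. Your Fourier-analytic route is the standard (and almost certainly the intended) one for such layered constructions, and its load-bearing steps check out: the Borel--Cantelli well-definedness from $m(f_k\neq 0)\le 4p_k\alpha_k^2=O(2^{-k})$; the mutual independence of the blocks $S_n(f_k)$ for $k\ge k_0(n)$ via \cref{prop:functions_defn}\ref{prop_sec_b:functions_defn}--\ref{prop_sec_c:functions_defn}; the product formula for $\varphi_{n,k}$ with the trapezoidal weights $c_r$; the variance computation in which the $\sum 1/(k\log_2 k)$ over $k\in[\sqrt{\log_2 n},\tfrac12\log_2 n]$ produces exactly $\sigma^2=2(\ln 2)^2$ (and the low blocks $k<k_0$ are $o(\sqrt n)$ in $L^2$ because $S_n(f_k)=S_{d_k}(g_k)-S_{d_k}(g_k)\circ T^n$ with $d_{k}$ growing doubly exponentially --- worth spelling out); and, decisively, the observation that $p_{k+1}-2p_k\in\{1,-2\}$ forces $\max(\|t_1p_k\|,\|t_1p_{k+1}\|)\ge\tfrac14\|t_1\|$, which is precisely the non-degeneracy that upgrades the CLT to an LCLT.

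Two points need repair, though neither is fatal. First, in the intermediate tail regime $\delta_n/\sqrt n\le\|t_1\|\le 2^{-C\sqrt{\log_2 n}}$ your assertion that $\log_2(1/\|t_1\|)=\Theta(\log_2 n)$ and that the Gaussian blocks ``recover essentially the full variance'' is false near the upper endpoint: there $\log_2(1/\|t_1\|)\asymp\sqrt{\log_2 n}$ and the $\ln\ln$-sum over $k\in[\sqrt{\log_2 n},C\sqrt{\log_2 n}]$ recovers only an $O(1/\ln\log n)$ fraction of $\sigma^2$. The estimate survives because $n\|t_1\|^2\ge n^{1-o(1)}$ in that range, so the exponent is still $\gg\log n$; but the case split should be restated as: for $\|t_1\|=2^{-L}$ the exponent is $\gtrsim n2^{-2L}\cdot\min\bigl(1,\ln(L/k_0)\bigr)$, which is $\gg\log n$ whenever $2^{-L}\ge(\log n)/\sqrt n$, and a genuine integrable Gaussian below that threshold. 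Second, the requirement $|\rho_n(t)-1|=o(1)$ on the central region is more restrictive than you indicate: the high blocks $k>k_1$ carry a $\Theta(n/\log\log n)$ share of the variance, so $\|t\|\,\|S_n(f)-\sum_{k_0\le k\le k_1}S_n(f_k)\|_2\lesssim \delta_n/\sqrt{\log\log n}$, forcing $\delta_n=o(\sqrt{\log\log n})$. This is compatible with the rest of the argument (the tail bound only needs $\delta_n\to\infty$), but it should be fixed explicitly, since your parenthetical suggestion to take $\delta_n$ as large as $2^{C\sqrt{\log_2 n}}$ in the first tail case would be inconsistent with it.
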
 
This statement gives part \ref{thm_sec_a:LCLT_2} of \cref{thm:LCLT_2}.
\begin{prop}\label{prop: 2nd property of LCLT function}
The function $f:\Omega \to\mathbb{Z}^2$  satisfies for all $N\in\N$, 
$$m\left(\left|\left\{S_{j+N}(f)-S_N(f):\ -N\leq j\leq N\right\}\right|=2N+1\right)>0.$$ 
\end{prop}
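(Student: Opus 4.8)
The plan is to show that with positive probability the cocycle $S_j(f)$, as $j$ ranges over a window of length $2N+1$ centered anywhere (here centered at $N$), takes $2N+1$ distinct values, i.e.\ all its increments along the window are ``generic''. The key observation is that a single high-level block $f_k^{(i)}$ of the construction, restricted to a suitably long stretch of iterates of $T$, already produces a walk with $2N+1$ distinct partial sums on an event of positive probability, while all other blocks can be forced to contribute nothing on that same stretch. Concretely, fix $N$. First I would choose $k$ large enough that $p_k > 2N$ and $d_k > 2N+p_k$, so that the ``window'' $\{0,1,\dots,2N\}$ of iterates sits strictly inside the first half $\{0,\dots,p_k-1\}$ of the defining range of $\bar f_k^{(i)}$, and in particular the negative-shift part $\bar f_k^{(i)}\circ T^{d_k+j}$ never enters the window. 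On this window, $f_k^{(i)}$ coincides with $\sum_{0\le j<p_k}\bar f_k^{(i)}\circ T^j$ minus a constant-on-the-window contribution; more simply, the increments of $S_{j}(f_k)$ over the window $j=N,\dots,N+ \text{(something)}$ are just the values $\bar f_k^{(i)}\circ T^{N+r}$, $r\ge 0$, which by \ref{prop_sec_b:functions_defn} are i.i.d.\ taking values in $\{-1,0,1\}^2$ with $P(\bar f_k=(1,0))>0$ etc.

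Next I would engineer an explicit favorable configuration. Because the relevant $2N$ coordinate pairs $(\bar f_k^{(1)}\circ T^{N+r},\bar f_k^{(2)}\circ T^{N+r})$ for $r=0,\dots,2N-1$ are i.i.d.\ and each single value, e.g.\ $(1,0)$, has positive probability, the event $E_k$ that all of these $2N$ increments equal, say, $(1,0)$ has positive probability. On $E_k$ the partial sums $S_{N+r}(f_k)-S_N(f_k)=(r,0)$ for $r=0,\dots,2N$ are $2N+1$ distinct points of $\Z^2$. Meanwhile I need the other blocks not to spoil this. For $l<k$ the functions $\{\bar f_l^{(i)}\circ T^j\}$ over the full range are independent of the block-$k$ data by \ref{prop_sec_c:functions_defn}, and for $l>k$ one uses the independence built into \ref{prop_sec_b:functions_defn} together with the rapid growth of $p_l,d_l$; so by conditioning I can intersect $E_k$ with the event that $\sum_{l\ne k} f_l$ is \emph{constant} on the window $\{N,\dots,N+2N\}$ of iterates (equivalently, all the increments $\sum_{l\ne k}f_l\circ T^{N+r}$ vanish for $r=0,\dots,2N-1$), and this intersection still has positive measure because the vanishing event for the tail $l>k$ has probability bounded below (each increment is $0$ with probability close to $1$, and one can instead just demand it for finitely many $l$ with $p_l\le$ window size, the rest contributing $0$ automatically for $l$ large once $p_l>$ window). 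On this positive-measure event, $S_{N+r}(f)-S_N(f)=(r,0)+\text{const}$, so the $2N+1$ values are distinct, which is exactly the assertion.

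A cleaner way to package the last point: since the window length $2N$ is fixed, only \emph{finitely many} blocks $f_l$ with $p_l \le 2N$ (equivalently $l\le$ some $l_0(N)$) can have $f_l\circ T^{N+r}\ne 0$ from the ``first half'' term within the window without also bringing in the far term $T^{d_l+\cdot}$ — and for $l$ large the far term is outside the window while the near term being nonzero on \emph{all} of $N,\dots,N+2N-1$ is not forced. I would therefore just take $k=k(N)$ large, demand $E_k$ as above, and demand that for each $l\ne k$ with $l\le l_0$ the increments $f_l\circ T^{N+r}=0$ for $0\le r<2N$ (positive probability, small $\alpha_l$ or just nonzero probability of all-zero), while for $l>l_0$ with $l\ne k$ one chooses $l_0$ already large enough that $d_l - p_l > 2N$ forces the window to miss the far term and the near term to be independent and all-zero with positive probability on the finitely many remaining relevant $l$; blocks with $p_l$ much larger than the window contribute $0$ on the window deterministically once the window avoids their support pattern, which happens for all but finitely many $l$. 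Intersecting these finitely many positive-probability, mutually (conditionally) independent events yields a set of positive $m$-measure on which $|\{S_{j+N}(f)-S_N(f): -N\le j\le N\}|=2N+1$.

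\textbf{Main obstacle.} The delicate point is the bookkeeping that simultaneously (i) makes one chosen block produce $2N+1$ distinct partial sums on the window and (ii) kills the contribution of \emph{all} other blocks on that same window, using only the independence statements \ref{prop_sec_b:functions_defn}--\ref{prop_sec_c:functions_defn}; in particular one must check that the window of $2N$ consecutive iterates can be placed inside the ``clean'' range $\{0,\dots,p_k-1\}$ of block $k$ (so the far term $T^{d_k+\cdot}$ is irrelevant there) while for every other $l$ the window either lies in an independent clean range or forces vanishing with positive probability — and that only finitely many $l$ require any attention, the rest being automatically zero on the window. Verifying this finiteness and the positivity of the final intersection is the crux; everything else is a routine i.i.d.\ computation.
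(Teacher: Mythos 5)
There is a genuine gap, and it sits exactly at the point you yourself flag as the crux: the treatment of the blocks $f_l$ with $p_l>2N$, $l\neq k$. You assert that for all but finitely many $l$ these blocks ``contribute $0$ on the window deterministically once the window avoids their support pattern.'' This is false: for every $l$, and every $n$ in the window, $f_l^{(1)}\circ T^n=\sum_{j=n}^{n+p_l-1}\bigl(\bar f_l^{(1)}\circ T^{j}-\bar f_l^{(1)}\circ T^{d_l+j}\bigr)$ is a genuinely random quantity which is nonzero with positive probability (each $\bar f_l^{(1)}\circ T^j$ equals $\pm1$ with probability $\alpha_l^2/2$); there is no deterministic vanishing and no finite set of ``relevant'' $l$. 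Consequently your final event is an intersection of \emph{infinitely} many events, each of probability strictly less than $1$, and positivity of the intersection is not automatic: one must check that the infinite product of their measures is positive. This is precisely what the paper's Lemma~\ref{lem: main new input in LCLT} does: for the tail blocks it takes the event that $\bar f_l^{(1)}$ vanishes on the two relevant stretches of length $p_l+2N$, computes its probability $(1-\alpha_l^2)^{p_l+2N}\geq \exp(-2/p_l)$, uses the independence across blocks from Proposition~\ref{prop:functions_defn}.\ref{prop_sec_c:functions_defn}, and concludes positivity from $\sum_l 1/p_l<\infty$. Your proposal never identifies this summability condition (it happens to hold because $\alpha_l^2(p_l+2N)\lesssim 1/p_l$, but that is the new input of the proof, not a routine remark), so as written the positivity of your final set is unproved.

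Two further, more minor points. First, your identification of the window increments of $S_j(f_k)$ with the single values $\bar f_k^{(i)}\circ T^{N+r}$ is incorrect: the increment is $f_k\circ T^j$, a sum of $2p_k$ of the basic variables, not one of them. This is fixable (as in the paper, demand $\bar f_k^{(1)}\circ T^j=1$ for $0\leq j<p_k+2N$ and $\bar f_k^{(1)}\circ T^{d_k+j}=0$, which makes each increment equal $p_k$), but it matters for how you formulate the favorable configuration. Second, your strategy of forcing the finitely many low blocks ($p_l\leq 2N$) to vanish on the window is viable, though the paper avoids it more cheaply by bounding their total contribution below by a deterministic constant $M<0$ (Fact~\ref{fact: Z does not matter}) and then demanding the high blocks exceed $-M$; either route is fine once the infinite-product issue above is handled.
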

 Part \ref{thm_sec_b:LCLT_2} follows from \Cref{prop: 2nd property of LCLT function}, as shown in \Cref{cor: 2nd property of LCLT function} below. 
\begin{cor} \label{cor: 2nd property of LCLT function}
The function $f:\Omega \to\mathbb{Z}^2$  satisfies for all $N\in\N$, 
\[
m\left(\left\{S_n(f):\ -N\leq n\leq N\right\}=2N+1\right)>0. 
\]
\end{cor}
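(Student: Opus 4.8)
The plan is to deduce the corollary directly from \Cref{prop: 2nd property of LCLT function} via the cocycle identity together with the $T$-invariance of $m$. The point is that "the orbit segment $S_{-N}(f),\dots,S_N(f)$ consists of $2N+1$ distinct values" is, up to applying $T^N$, the same event as the one about increments controlled by the proposition.

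First I would record the elementary cocycle relation: for every $x\in\Omega$ and every integer $j$ with $-N\le j\le N$,
\[
S_{j+N}(f)(x)-S_N(f)(x)=S_j(f)\bigl(T^N x\bigr).
\]
For $0\le j\le N$ this is just $\sum_{k=N}^{j+N-1}f(T^k x)=\sum_{i=0}^{j-1}f(T^{N+i}x)$; for $-N\le j<0$ one has $j+N\ge 0$, so from the definition of $S_j(f)$ on negative indices both sides equal $-\sum_{\ell=j+N}^{N-1}f(T^\ell x)$ (no wraparound occurs, and invertibility of $T$ makes every term meaningful). Hence the map $z\mapsto z-S_N(f)(x)$ is a bijection of $\Z^2$ carrying $\{S_{j+N}(f)(x):-N\le j\le N\}$ onto $\{S_j(f)(T^Nx):-N\le j\le N\}$, so these sets have equal cardinality, i.e.
\[
\bigl|\{S_{j+N}(f)(x)-S_N(f)(x):-N\le j\le N\}\bigr|=\bigl|\{S_n(f)(T^Nx):-N\le n\le N\}\bigr|.
\]

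Next I would translate this into the statement of the corollary. Put $E_N:=\{x\in\Omega:\ |\{S_n(f)(x):-N\le n\le N\}|=2N+1\}$. The displayed equality shows that the event in \Cref{prop: 2nd property of LCLT function} holds at $x$ if and only if $T^Nx\in E_N$, that is,
\[
\bigl\{x:\ |\{S_{j+N}(f)(x)-S_N(f)(x):-N\le j\le N\}|=2N+1\bigr\}=T^{-N}E_N .
\]
Since $T$ preserves $m$, iterating $m(T^{-1}A)=m(A)$ gives $m(E_N)=m(T^{-N}E_N)$, and the right-hand side is positive by \Cref{prop: 2nd property of LCLT function}. Therefore $m(E_N)>0$, which is exactly the claim of \Cref{cor: 2nd property of LCLT function}.

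There is essentially no obstacle: all the content is in \Cref{prop: 2nd property of LCLT function}, and the corollary is the observation that injectivity of the orbit segment is the $T^N$-preimage of the increment event. The only small items to verify are the cocycle bookkeeping on negative indices and that invertibility of $T$ makes $S_n(f)$ well defined for $n<0$, both routine.
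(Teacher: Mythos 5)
Your proposal is correct and follows essentially the same route as the paper: both proofs rest on the cocycle identity $S_{n+N}(f)-S_N(f)=S_n(f)\circ T^N$ for $-N\le n\le N$ together with the $m$-invariance under $T$, which transfers the positive-measure increment event of \Cref{prop: 2nd property of LCLT function} to the event in the corollary. The bookkeeping for negative indices that you carry out matches the paper's computation, so there is nothing further to add.
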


\begin{proof}
Fix $N\in\N$. First note that as $T$ preserves $m$, 
\[
m\left(\left\{S_n(f):\ -N\leq n\leq N\right\}=2N+1\right)=m\left(\left\{S_n(f)\circ T^N:\ -N\leq n\leq N\right\}=2N+1\right).
\]
Secondly, 
\[
S_n(f)\circ T^N=\begin{cases}
\sum_{k=0}^{n-1}f\circ T^{k+N},& 1\leq n\leq N,\\
(0,0), &n=0,\\
-\sum_{k=n}^{-1}f\circ T^{k+N},& -N\leq n\leq 1.
\end{cases}=\begin{cases}
\sum_{k=N}^{n+N-1}f\circ T^k,& 1\leq n\leq N,\\
(0,0), &n=0,\\
-\sum_{k=n+N}^{N-1}f\circ T^k,& -N\leq n\leq 1.
\end{cases}
\]
We deduce that for all $-N\leq n\leq N$,
\[
S_n(f)\circ T^N=S_{n+N}(f)-S_N(f).
\]
The claim now follows from \cref{prop: 2nd property of LCLT function}. 
\end{proof}
Endow $\Z^2$  with the lexicographic ordering which we will denote by $\leq$. The proof of \cref{prop: 2nd property of LCLT function}, goes by showing that the the event 
\begin{equation}\label{eq: Goal Event}
(0,0)<S_1(f)<\cdots<S_{2N}(f),
\end{equation}
has positive $m$ measure. Note that \eqref{eq: Goal Event} implies that for all $-N\leq n<m\leq N$,
\[
\left(S_{N+m}(f)-S_N(f)\right)-\left(S_{N+n}(f)-S_N(f)\right)=\sum_{l=n}^{m-1}\left(S_{N+l+1}(f)-S_{N+l}(f)\right)\neq (0,0),
\]
and in particular 
\[
\left|\left\{S_{j+N}(f)-S_N(f):\ -N\leq j\leq N\right\}\right|=2N+1.
\]
In the much simpler toy model of the  the simple random walk on $\Z^2$, the event that all the first $2N$ moves were $(1,0)$ implies \eqref{eq: Goal Event}; the latter event has probability $4^{-2N}$ and thus the probability of \eqref{eq: Goal Event} is at least $4^{-2N}$. The proof of \cref{prop: 2nd property of LCLT function}, goes by carefully studying the layers of independence in the definition of $f$ in order to show that, the event that for all $1\leq j\leq N$, the first coordinate of $S_{j+1}(f)$ is strictly larger than the one of $S_j(f)$, is a positive measure set.

Now to specifics. Let $N\in\N$ be fixed. For all $0\leq n<2N$, we decompose $$S_{n+1}\left(f^{(1)}\right)-S_{n}\left(f^{(1)}\right)=Z(n)+Y(n),$$ where 
\begin{align*}
Z(n)&:=\sum_{k: 2N\geq p_k}f_k^{(1)}\circ T^n,\\
Y(n)&:=\sum_{k: 2N<p_k} f_k^{(1)}\circ T^n.
\end{align*}

For each $0\leq n\leq 2N$, $Y(n)$ is a finite sum of bounded random variables. The following is then obvious. 

\begin{fact}\label{fact: Z does not matter}
There exists $\Z\ni M<0$ such that for all $x\in \Omega$, $\min_{0\leq n< 2N}Z(n)(x)\geq M$. 
\end{fact}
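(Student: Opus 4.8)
The plan is to treat $Z(n)$, for each fixed $n$, as a \emph{finite} sum of pointwise-bounded functions and to extract a crude termwise lower bound; no genuine obstacle arises, which is why the paper states the fact as obvious.

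First I would record that the index set occurring in the definition of $Z(n)$, namely $K_N:=\{k\ge 0:\ p_k\le 2N\}$, is finite. Indeed $p_k\ge 2^k$ for every $k$ (with equality when $k$ is even and $p_k=2^k+1$ when $k$ is odd), so $p_k\to\infty$ and $K_N$ has finitely many elements. Thus $Z(n)=\sum_{k\in K_N}f_k^{(1)}\circ T^n$ is a sum over one fixed finite index set, uniformly in $n\in\{0,\dots,2N-1\}$ and in $x\in\Omega$.

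Next I would bound each summand pointwise. From $f_k^{(i)}=\sum_{j=0}^{p_k-1}\bigl(\bar f_k^{(i)}\circ T^j-\bar f_k^{(i)}\circ T^{d_k+j}\bigr)$ and the fact that each $\bar f_k^{(i)}$ takes values in $\{-1,0,1\}$, the function $f_k^{(i)}$ is a sum of $2p_k$ terms each bounded below by $-1$, hence $f_k^{(i)}(x)\ge -2p_k$ for every $x\in\Omega$; composing with $T^n$ preserves this pointwise bound, so $f_k^{(1)}\circ T^n(x)\ge -2p_k$ as well. Summing over $k\in K_N$ gives, for all $x\in\Omega$ and all $0\le n<2N$,
\[
Z(n)(x)\ \ge\ -2\sum_{k\in K_N}p_k .
\]

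Finally I would set $M:=\min\bigl(-1,\ -2\sum_{k\in K_N}p_k\bigr)$. Since each $f_k^{(1)}$ is $\Z$-valued this is an integer, it is negative, it depends only on $N$, and by the displayed inequality $\min_{0\le n<2N}Z(n)(x)\ge M$ for every $x\in\Omega$, which is exactly the assertion of the fact. The only things to keep straight are the finiteness of $K_N$ (from $p_k\to\infty$) and the explicit uniform lower bound $f_k^{(1)}\ge -2p_k$; everything else is immediate.
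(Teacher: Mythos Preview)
Your proof is correct and is precisely the argument the paper has in mind: the paper's only justification is the single sentence that $Z(n)$ (written there as $Y(n)$, evidently a typo) is a finite sum of bounded random variables, and you have simply made that explicit by identifying the finite index set $K_N$ and the termwise bound $f_k^{(1)}\ge -2p_k$.
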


Using \Cref{prop:functions_defn}.\ref{prop_sec_c:functions_defn} and the definition of $f_k,\ k\in\N$, we will show that the functions $Z(n)$ and $Y(n)$ are independent (see proof of \Cref{prop: 2nd property of LCLT function} below). The following Lemma is the main new input in the proof of \Cref{prop: 2nd property of LCLT function}. 

\begin{lemma}\label{lem: main new input in LCLT}
For every $N\in\N$ and $C\in\N$, 
\[
m\left(\forall n\in[0,2N),\ Y(n)>C\right)>0. 
\]
\end{lemma}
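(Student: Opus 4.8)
The plan is to reduce the statement to a pure-probability fact about the functions $\bar f_k^{(1)}$, $k$ large, using the layered independence structure of \Cref{prop:functions_defn}. Fix $N$ and $C$ as in the statement, and recall $Y(n) = \sum_{k:\,2N<p_k} f_k^{(1)}\circ T^n$ where $f_k^{(1)} = \sum_{j=0}^{p_k-1}\bigl(\bar f_k^{(1)}\circ T^j - \bar f_k^{(1)}\circ T^{d_k+j}\bigr)$. Since $p_k\to\infty$, only finitely many "small" indices $k$ are excluded, and the sum defining $Y(n)$ runs over all sufficiently large $k$; I will not need the full infinite sum — instead I will exhibit positivity by controlling one well-chosen block of indices and absorbing the rest.

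First I would isolate a single index: pick the smallest $k_0$ with $2N < p_{k_0}$, so that $2N < p_{k_0} \le p_k$ for all $k \ge k_0$. For this $k_0$, the block $\{\bar f_{k_0}^{(1)}\circ T^j : 0\le j \le 2d_{k_0}+p_{k_0}\}$ is i.i.d.\ by \Cref{prop:functions_defn}\ref{prop_sec_b:functions_defn}, and for each $0\le n < 2N < p_{k_0}$ the translated window $\{n, n+1,\dots,n+p_{k_0}-1\}$ together with $\{d_{k_0}+n,\dots,d_{k_0}+n+p_{k_0}-1\}$ all lie inside $[0, 2d_{k_0}+p_{k_0}]$ (here one uses $d_{k_0} = 2^{k_0^2}$ is much larger than $p_{k_0}$, so the positive and negative windows are disjoint and contained in the range). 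Consider the event $E_0$ that $\bar f_{k_0}^{(1)}\circ T^j = 1$ for all $0 \le j \le p_{k_0}+2N-1$ (the positive part) and $\bar f_{k_0}^{(1)}\circ T^j = 0$ for all $d_{k_0} \le j \le d_{k_0}+p_{k_0}+2N-1$ (killing the negative part). On $E_0$, for every $0\le n<2N$ one has $f_{k_0}^{(1)}\circ T^n = \#\{j\in[n,n+p_{k_0}-1]\} - 0 = p_{k_0} > 2N \ge 2N$, in fact $f_{k_0}^{(1)}\circ T^n = p_{k_0}$. By the i.i.d.\ property, $m(E_0) = (\alpha_{k_0}^2/2)^{p_{k_0}+2N}\cdot(1-\alpha_{k_0}^2)^{p_{k_0}+2N} > 0$.

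Next I would handle the remaining indices $k > k_0$ with $2N < p_k$. Each contributes $f_k^{(1)}\circ T^n$, a sum of $\{-1,0,1\}$-valued variables; the tail $\sum_{k>k_0} f_k^{(1)}$ converges $m$-a.e.\ (this is part of why $f$ is well defined, from the cited \cite[Theorem 2.4.]{kosloff_Sanadhya_2024}), so $W(n):=\sum_{k>k_0,\,2N<p_k} f_k^{(1)}\circ T^n$ is a.e.\ finite. By the independence in \Cref{prop:functions_defn}\ref{prop_sec_c:functions_defn}, the block generating $E_0$ (indices $k=k_0$) is independent of the $\sigma$-algebra $\mathcal A_{k_0}$ — but here I need independence the other way, of $E_0$ from the \emph{higher} indices; this holds because for each $k>k_0$ the generators with index $k$ are independent of $\mathcal A_k \supset \sigma(\bar f_{k_0}^{(1)}\circ T^j : j)$, and iterating gives that $E_0$ is independent of $\sigma(W(n): 0\le n<2N)$. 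Since $\min_{0\le n<2N} W(n) > -\infty$ a.e., there is $L\in\N$ with $m(G)>0$ where $G := \{\min_{0\le n<2N} W(n) \ge -L\}$. On $E_0\cap G$ one gets, for all $0\le n<2N$, $Y(n) = f_{k_0}^{(1)}\circ T^n + W(n) \ge p_{k_0} - L$; choosing $k_0$ was forced, but if $p_{k_0} - L \le C$ I instead enlarge the positive window in $E_0$ from length $p_{k_0}$ to length $p_{k_0}$ — wait, that's fixed; so instead I incorporate more high indices into the "forced" block: redo the argument simultaneously for all $k_0 \le k \le k_1$ with $k_1$ chosen so that $\sum_{k_0\le k\le k_1} p_k > C + L$, forcing $\bar f_k^{(1)}\circ T^j=1$ on the positive windows and $=0$ on the negative windows for each such $k$. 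This compound event $E$ still has positive measure (finite product of positive-measure events, independent across $k$ by \ref{prop_sec_c:functions_defn}), and is independent of the residual tail $W'(n) = \sum_{k>k_1,\,2N<p_k} f_k^{(1)}\circ T^n$; picking $L$ so the tail is $\ge -L$ on a positive-measure set $G'$, on $E\cap G'$ we get $Y(n) \ge \sum_{k_0\le k\le k_1} p_k - L > C$ for all $0\le n<2N$, proving the Lemma.

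The main obstacle is bookkeeping the independence correctly: \Cref{prop:functions_defn}\ref{prop_sec_c:functions_defn} is stated as "index $k$ generators are independent of lower indices $\mathcal A_k$", so to conclude that the forced finite block $\{k_0,\dots,k_1\}$ is independent of the infinite higher tail, one must chain these statements (each higher index is independent of everything below it, so conditioning successively decouples the finite block from the whole tail), and one must verify the window-range condition $n + p_k - 1 \le 2d_k+p_k$ and disjointness of the $+$ and $-$ windows for every $n < 2N$ and every $k$ in the block — both follow from $2N < p_k \le d_k$ and $d_k \gg p_k$, but should be checked. The a.e.-finiteness of the tail $W'(n)$ is quoted from the well-definedness of $f$; everything else is elementary.
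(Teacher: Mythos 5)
Your overall strategy (force a block of indices $k$ to contribute $+p_k$ on the whole window, and argue the remaining indices cannot spoil it) is the same as the paper's, but the way you handle the infinite tail has a genuine gap. Your final step chooses $k_1$ ``so that $\sum_{k_0\le k\le k_1}p_k>C+L$'' and then chooses $L$ ``so the tail beyond $k_1$ is $\ge -L$ on a positive-measure set'': this is circular, since $L$ is only defined once $k_1$ is fixed (the tail $W'(n)$ depends on $k_1$), while $k_1$ is supposed to be chosen in terms of $L$. The only input you invoke about the tail is a.e.\ convergence of the series defining $f$, and that gives no control on how the admissible constant $L(k_1)$ grows with $k_1$; the trivial bounds one can extract (e.g.\ conditioning on the values of the intermediate block, using independence) give $L(k_1)\le L(k_0)+\sum_{k_0<k\le k_1}p_k$, which degrades exactly as fast as the gain $\sum_{k_0\le k\le k_1}p_k$, so the circle cannot be closed without a quantitative tail estimate. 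This is precisely where the paper does something different: instead of truncating and bounding the tail from below, it forces the tail to vanish on the relevant windows, i.e.\ it intersects the positive block event with the events $D_k=\{\bar f_k^{(1)}\circ T^j=\bar f_k^{(1)}\circ T^{d_k+j}=0,\ 0\le j<p_k+2N\}$ for \emph{all} remaining $k\ge\kappa(N)$, and shows the infinite intersection has positive measure because $m(D_k)=(1-\alpha_k^2)^{p_k+2N}\ge\exp(-2/p_k)$ and $\sum_k 1/p_k<\infty$. Some such quantitative use of the smallness of $\alpha_k^2 p_k$ (equivalently, a Borel--Cantelli estimate showing that for large $k_1$ the entire tail is identically zero on the window with probability bounded away from $0$) is unavoidable; your proposal never supplies it.

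Two smaller points. First, your consistency check for the forced-positive event is not quite right as stated: you need the windows $[n,n+p_k)$ and $[d_k+n,d_k+n+p_k)$ to be disjoint for all $n<2N$, i.e.\ $p_k+2N\le d_k$, and this can fail for the smallest admissible index (e.g.\ $p_1=3>d_1=2$, so ``$p_k\le d_k$'' is not automatic); the paper handles this by starting the forced-positive block at the first $K(N)\ge\kappa(N)$ with $2p_K<d_K$, and you would need the analogous adjustment. Second, your independence bookkeeping (chaining \Cref{prop:functions_defn}\ref{prop_sec_c:functions_defn} to decouple a finite block from all higher indices on the relevant window) is fine and is also how the paper argues; that part is not the problem.
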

We first prove \cref{prop: 2nd property of LCLT function}, assuming \Cref{lem: main new input in LCLT}, and then prove the Lemma. 
\begin{proof}[Proof of \cref{prop: 2nd property of LCLT function}]
Fix $N\in \N$. Let $K=K(N)\in\N$ be the minimal positive integer such that $2N<p_K$. By definition, $Z(n)$ is a (linear) function of 
\[
\left\{\bar{f}^{(1)}_k\circ T^j:\ 1\leq k< K(n), \  0\leq j\leq 2d_K+p_K\right\}.
\]
Similarly $Y(n)$ is a function of 
\[
\left\{\bar{f}^{(1)}_k\circ T^j:\ k\geq K(n), \  0\leq j\leq 2d_k+p_k\right\}.
\]
Thus by \cref{prop:functions_defn}.\ref{prop_sec_c:functions_defn} and the definition of $f_k,\ k\in\N$ it follows that, $Z(n)$ and $Y(n)$ are independent. 

Now let $M<0$ be as in \Cref{fact: Z does not matter} and write,
\[
A:=\left\{x\in \Omega:\ \forall n\in[0,2N),\ Y(n)>-M\right\}. 
\]
For all $x\in A$ the following holds: for all $0\leq n <2N$,
\begin{align*}
S_{n+1}\left(f^{(1)}\right)(x)-S_n\left(f^{(1)}\right)(x)&=Y(n)(x)+Z(n)(x)\\
&> -M+\min_{0\leq n<2N}Z(n)(x)> 0. 
\end{align*}
Consequently for all $x\in A$, 
\[
(0,0)< S_1(f)(x)<\ldots <S_{2N}(f)(x)
\]
and as a consequence (see the discussion following \eqref{eq: Goal Event})
\[
m\left(\left|\left\{S_{N+j}(f)-S_N(f):-N\leq j\leq N\right\}\right|=2N+1\right)\geq m(A).
\]
By \cref{lem: main new input in LCLT} with $C = -M$, we have $m(A)>0$ and we are done. 
\end{proof}
\begin{proof}[Proof of \cref{lem: main new input in LCLT}]

Fix $C\in\N$. Let $\kappa=\kappa(N)$ be the smallest positive integer such that $2N<p_\kappa$ and $K=K(N)$ the smallest integer largest than $\kappa$ which satisfies in addition that $2p_K<d_K$. Note that the latter condition holds since $\lim_{k\to\infty}\frac{p_k}{d_k}=0$.  We will now define a set $D\in\B$ which we will show has positive measure and is contained in the required set $\{\min_{0\leq n<2N}Y(n)>C\}$. 

The set $D$ is defined via $D=\cap_{k=\kappa(N)}^\infty D_k$ where the sets $D_k$ are defined by the following rules: 
\begin{itemize}
    \item For all $K(N)\leq k<K(N)+C$ define,
\[
D_k:=\left\{x\in\Omega :\  \text{for all}\ 0\leq j< p_k+2N,\ \bar{f}_k^{(1)}\circ T^j(x)=1\ \text{and}\ \bar{f}_k^{(1)}\circ T^{d_k+j}(x)=0\right\}.
\]
Note that as $p_k+2N<2p_k$ there is no contradiction in the definition of $D_k$.
\vspace{3mm}
\item For $k\geq \kappa(N)$ which is not in $[K(N),K(N)+C)$ we define
\[
D_k:=\left\{x\in\Omega :\  \text{for all}\ 0\leq j< p_k+2N,\ \bar{f}_k^{(1)}\circ T^j(x)=\bar{f}_k^{(1)}\circ T^{d_k+j}(x)=0 \right\}.
\]
\end{itemize}

We first show that $D\subset \{\min_{0\leq n<2N}Y(n)>C\}$. Indeed, let $0\leq n < 2N$. First notice that for all $k\geq \kappa(N)$ which is not in $[K(N),K(N)+C)$ and $x\in D_k$, 
\[
f_k\circ T^{n}(x)=\sum_{j=n}^{p_k+n-1}\left(\bar{f}_k^{(1)}\circ T^{j}(x)-\bar{f}_k^{(1)}\circ T^{d_k+j}(x)\right)=0,
\]
as all the functions in the sum are $0$ on $D_k$. 

Similarly, for $K(N)\leq k<K(N)+C$ and $x\in D_k$,
\begin{align*}
f_k\circ T^{n}(x)&=\sum_{j=n}^{p_k+n-1}\left(\bar{f}_k^{(1)}\circ T^{j}(x)-\bar{f}_k^{(1)}\circ T^{d_k+j}(x)\right)\\
&=\sum_{j=n}^{p_k+n-1}\bar{f}_k^{(1)}\circ T^{j}(x)=p_k>1.
\end{align*}
These two observations together show that for all $x\in \cap_{k\geq \kappa(N)} D_k$,
\[
Y(n)=\sum_{k\geq \kappa(n)}f_k^{(1)}\circ T^n(x)=\sum_{k=K(N)}^{K(N)+C-1} f_k^{(1)}\circ T^n(x)>C.
\]
This shows that for all $0\leq n<2N$ and $x\in D$, $Y(n)>C$. 

It remains to show that $m(D)>0$. By \cref{prop:functions_defn}. \ref{prop_sec_b:functions_defn}, each $D_k$ is a finite intersection of independent, positive measure sets and thus for all $k\geq \kappa(N)$, 
\[
m\left(D_k\right)>0. 
\]
Furthermore, for all $k\geq K(N)+C$,
\begin{align*}
m\left(D_k\right)&=\prod_{j=0}^{p_k+2N-1}\left(m\left(\bar{f}_k^{(1)}\circ T^{j}=0\right)\cdot m\left(\bar{f}_k^{(1)}\circ T^{j+d_k}=0\right)\right)\\
&=\left(1-\alpha_k^2\right)^{p_k+2N}\\
&\geq \exp\left(-\alpha_k^2(p_k+2N)\right)\geq \exp\left(-\frac{2}{p_k}\right).
\end{align*}
Here the last inequality follows from $p_k+2N<2p_k$ and $p_k\alpha_k^2\leq\frac{1}{p_k}$. Finally, by \cref{prop:functions_defn}. \ref{prop_sec_c:functions_defn}, $\left\{D_k\right\}_{k\geq \kappa(N)}$ are independent. We conclude that,
\begin{align*}
m(D)&=\prod_{k\geq \kappa(N)}m\left(D_k\right)\\
&=\left(\prod_{k= \kappa(N)}^{K(N)+C-1} m\left(D_k\right)\right)\left(\prod_{k\geq K(N)+C} m\left(D_k\right)\right)\\
&\geq \left(\prod_{k= \kappa(N)}^{K(N)+C-1} m\left(D_k\right)\right)\left(\prod_{k\geq K(N)+C} \exp\left(-\frac{2}{p_k}\right)\right).
\end{align*}
The first term in the product is positive as a finite product of positive numbers. The second is positive since $\sum_{k=1}^\infty\frac{1}{p_k}<\infty$. It follows that $m(D)>0$, and the proof is concluded. 
\end{proof}

\section{Gaussian systems proof of \cref{thm:recurrence_2}}\label{sec: Gaussian}

A real valued stationary sequence of random variables $\left(X_n\right)_{n=-\infty}^\infty$ is a Gaussian process if for every $n_1<n_2<\cdots<n_m$ and $c_1,\ldots c_m\in\R$, $\sum_{k=1}^mc_kX_{n_k}$ is a Gaussian random variable. If in addition the expectation of $X_0$ is $0$, we will refer to the process as a stationary symmetric Gaussian process. The covariance function $\rho:\Z\to\R$ of the (symmetric) process defined by $\rho(n)=\mathbb{E}\left(X_0X_n\right)$ is a positive definite function. By Bochner's Theorem there exists $\sigma$ a symmetric measure on the circle such that for all $n\in\Z$,
\[
\mathbb{E}\left(X_0X_n\right)=\hat{\sigma}(n)=\int_{S^1}e^{int}d\sigma. 
\]
In this case $\sigma$ is called the spectral measure of the process. In the other direction, for every symmetric probability measure on the circle $\sigma$ there exists a symmetric Gaussian process whose spectral measure is $\sigma$.

A probability preserving system $(X,\B,\mu,T)$ is a Gaussian automorphism with spectral measure $\sigma$ if there exists $f\in L^2(X,\mu)$ such that
\begin{itemize}
\item $\int fd\mu=0$. 
\item The process $\left\{f\circ T^n\right\}_{n\in\Z}$ is Gaussian and for all $n\in\Z$, $\int f \cdot f\circ T^nd\mu=\hat{\sigma}(n)$,
\item the  
 $\sigma$-algebra generated by the functions $\left\{f\circ T^n\right\}_{n\in\Z}$ is $\B \mod \mu$. 
\end{itemize}

In the course of this section, we will make use of the following properties of Gaussian automorphisms. The proofs for these statements (unless specific references are mentioned) are mentioned in \cite{MR832433} or \cite{Janvresse_Roy_DeLaRue_2023}. 
\begin{enumerate}[label=(\alph*)]
    \item $T$ is mixing if and only if $\sigma$ is a Rajchman measure meaning that $\hat{\sigma}(n)\xrightarrow[|n|\to\infty]{}0$.
    \item If $\sigma$ is a singular measure (with respect to Lebesgue) then $(X,\B,\mu,T)$ is a zero entropy transformation \cite{MR191718,MR1231420}.
    \item $T$ can be embedded in a flow, in other words, there exists a $\mu$ preserving $\R$ - flow $\left\{\phi_s\right\}_{s\in\R}$ such that $\phi_1=T$. As a consequence for every $d\in \Z\setminus\{0\}$,  $R=\phi_{1/d}$ satisfies $R^d=T$. If $T$ is ergodic (respectively mixing) then so is $R$. 
    \item Let $U:L^2(X,m)\to L^2(X,m)$ be the Koopman  operator of $T$ and $\mathcal{H}$  the $L^2$ closure of the linear span of $\left\{U^nf:\ n\in\Z\right\}$. Every function in $\mathcal{H}$ is a Gaussian random variable and $\mathcal{H}$ is $U$ invariant (see \cite[Appendix B-D]{Kechris_2010}). 
    \item The space $L^2(X,m)$ is isomorphic to the Fock space over $\mathcal{H}$ (see \cite[Section 4.2 and 4.3]{Janvresse_Roy_DeLaRue_2023} and \cite[Appendix B-D]{Kechris_2010}). 
    \item Given a unitary operator $V: \mathcal{H}\to\mathcal{H}$, there is a canonical extension, called the second quantization of $V$, which is a unitary operator on the whole of $L^2(X,\mu)$ (see \cite[Proposition 4.1]{Janvresse_Roy_DeLaRue_2023}).
    \item If $V$ restricted to $\mathcal{H}$ is unitarily equivalent to $U$ restricted to $\mathcal{H}$ then there exists $S$, a $(X,\B,\mu)$ measure preserving system which is (measure theoretically) isomorphic to $T$ such that for all $n\in \Z$, $V^nf=f\circ S^n$. 
\end{enumerate}
A probability measure $\sigma$ on the circle is said to have polynomial Fourier decay with parameter $\delta>0$ if there exists $C>0$ such that for all $n\in\Z$,
\[
\left|\hat{\sigma}(n)\right|\leq C|n|^{-\delta}. 
\]
A standard exercise on differentiation by parts shows that if $\sigma$ is Lebesgue absolutely continuous with a $C^1$ density function then it has a polynomial Fourier decay of order 1. Kauffman showed in \cite{Kaufman_1980} that there exists singular measures with polynomial decay. See also \cite{MR3466857, 2023arXiv230601275A,MR4887758} and the references therein for recent results on polynomial Fourier decay for singular measures. 

\begin{thm}\label{thm: Gaussian no rec}
Let $\sigma$ be a measure on the circle with polynomial decay with parameter $\delta>0$, $\left(X,\mathcal{B},m,T\right)$ a Gaussian automorphism with spectral measure $\sigma$ and $k\in\mathbb{N}$ such that $2k\delta>1$. There exists an $m$ preserving transformation $S:X\to X$, isomorphic to $T$, and  $B\in\B^{\otimes k}$ such that for all $n\in\Z$, 
\[
B\cap \left(T^{\otimes k}\right)^{-n}B\cap \left(S^{\otimes k}\right)^{-n}B = \emptyset. 
\]
\end{thm}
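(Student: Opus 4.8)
The plan is to build $S$ via the second-quantisation machinery of (d)--(g), choosing the unitary $V$ on the first chaos $\mathcal H$ so that $f\circ T^n$ and $f\circ S^n$ obey a rigid linear relation, and then to take for $B$ the $k$-fold product of a set of the form $\{a<f<b\}$; the Cartesian power will only be there to match the statement. Normalising $\sigma$ to a probability measure, let $f\in L^2(X,m)$ be the generating function of the Gaussian automorphism, $U$ the Koopman operator restricted to $\mathcal H$, and $g_n:=f\circ T^n=U^nf$. I would set $b_n:=-g_n+2\hat\sigma(n)g_0\in\mathcal H$ and check, by a short covariance computation from $\langle g_m,g_n\rangle=\hat\sigma(m-n)$, that $\langle b_m,b_n\rangle=\hat\sigma(m-n)$ for all $m,n$. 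Hence $(b_n)_{n\in\Z}$ is a stationary Gaussian sequence with spectral measure $\sigma$, and since $b_0=g_0=f$ while $g_n=2\hat\sigma(n)b_0-b_n$, the sequences $(b_n)$ and $(g_n)$ span the same closed subspace, namely all of $\mathcal H$. Therefore the bilateral shift $V\colon b_n\mapsto b_{n+1}$ is a unitary of $\mathcal H$ which, like $U$, has $f$ as a cyclic vector with spectral measure $\sigma$, so $V$ is unitarily equivalent to $U$; by (f)--(g) there is then an $m$-preserving $S$, isomorphic to $T$, with $f\circ S^n=V^nf=b_n$, i.e.
\[
f\circ T^n+f\circ S^n=2\,\hat\sigma(n)\,f\qquad m\text{-a.e.},\ \forall\,n\in\Z .
\]

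Next, fix reals $0<a<b$, put $B_0:=\{x:\ a<f(x)<b\}$ (of positive $m$-measure) and $B:=B_0^{\otimes k}\in\B^{\otimes k}$. The point is that $B\cap(T^{\otimes k})^{-n}B\cap(S^{\otimes k})^{-n}B$ is $m^{\otimes k}$-null whenever $\hat\sigma(n)\le a/b$; as it equals the $k$-th power of $B_0\cap T^{-n}B_0\cap S^{-n}B_0$, it is enough to argue on a single coordinate. If $x$ lay in $B_0\cap T^{-n}B_0\cap S^{-n}B_0$ and in the conull set on which the identity above holds, then, writing $u:=f(x)$, $v:=f(T^nx)\in(a,b)$ and $\rho:=\hat\sigma(n)$, one would need $2\rho u-v=f(S^nx)\in(a,b)$; but $2\rho u-v<-a$ if $\rho\le 0$, and $2\rho u-v<2\rho b-a\le a$ if $0<\rho\le a/b$, so $2\rho u-v\notin(a,b)$ --- a contradiction. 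Since $T$ is mixing, $\sigma$ is Rajchman by (a), so $\hat\sigma(n)\to 0$ and only finitely many $n$ satisfy $\hat\sigma(n)>a/b$. (For this construction only the Rajchman property of $\sigma$ is used; the polynomial decay and the Cartesian power, together with the Borel--Cantelli lemma, are what power the alternative argument that sums the measures of these triple intersections.)

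To pass from ``null for all but finitely many $n$'' to ``empty for every $n\ne 0$'' I would use the soft pigeonholing in the spirit of the end of the proof of \Cref{thm:recurrence_2} for $c=d=1$. Write $\varrho_n(G):=G\cap(T^{\otimes k})^{-n}G\cap(S^{\otimes k})^{-n}G$ and $J(G):=\{n:\ m^{\otimes k}(\varrho_n(G))>0\}$; by the previous paragraph $J(B)$ is finite. Starting from $G_0:=B$, as long as $J(G_i)\ne\{0\}$ pick $n_i\ne 0$ in $J(G_i)$ of largest modulus and let $G_{i+1}:=\varrho_{n_i}(G_i)$, which has positive measure; since $\varrho_{n_i}(G_{i+1})\subseteq\varrho_{2n_i}(G_i)$ and $2n_i\notin J(G_i)$ by maximality, $J(G_{i+1})\subsetneq J(G_i)$, so after finitely many steps one reaches a positive-measure set $A$ with $J(A)=\{0\}$, i.e. $m^{\otimes k}(\varrho_n(A))=0$ for all $n\ne 0$. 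Finally put $E:=\bigcup_{n\ne 0}\varrho_n(A)$, an $m^{\otimes k}$-null set, and replace $A$ by $A\setminus\bigcup_{n\in\Z}\big((T^{\otimes k})^nE\cup(S^{\otimes k})^nE\big)$; this removes only a null set, so $m^{\otimes k}(A)>0$ still, and now any point of $\varrho_n(A)$ with $n\ne 0$ would have to lie in $E$ although it has just been deleted --- hence $\varrho_n(A)=\varnothing$ for all $n\ne 0$. This $A$ is the required $B$.

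I expect the genuine obstacle to be the construction and verification of $V$: one must make sure that $(b_n)$ really is a stationary Gaussian sequence with spectral measure $\sigma$ which is total in $\mathcal H$, so that $V$ is an honest unitary equivalent to $U$ and (f)--(g) produce a genuine measure-preserving $S\cong T$ realising $f\circ T^n+f\circ S^n=2\hat\sigma(n)f$; one must also keep track that this is an identity of $L^2$-classes, so the triple intersections are empty only modulo null sets until the final surgery. Everything downstream --- the choice of the level set $B_0$, the brief covariance bookkeeping, and the pigeonhole/null-set clean-up --- should be routine.
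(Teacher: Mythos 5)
Your proposal is correct, and the transformation $S$ you construct is in fact the same as the paper's: with $P$ the orthogonal projection of $\mathcal H$ onto $\mathbb{R}f$ and $W=2P-I$, the paper's operator $V=WUW$ satisfies $V^nf=2PU^nf-U^nf=2\hat\sigma(n)f-U^nf=b_n$, so your bilateral shift $b_n\mapsto b_{n+1}$ coincides with $WUW$; you merely certify the unitary equivalence with $U\restriction\mathcal H$ via cyclicity of $f$ and equality of spectral measures (together with the normalisation $\hat\sigma(0)=1$ and the routine check that the shift is well defined and isometric on the span, since the Gram matrix $\hat\sigma(m-n)$ is shift-invariant), instead of by conjugation with the reflection $W$. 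Where you genuinely diverge is the non-recurrence argument. The paper takes the one-sided set $A=\{f>1\}$, uses only the soft containment $T^{-n}A\cap S^{-n}A\subset\{PU^nf>1\}$ and Chebyshev to get $m(T^{-n}A\cap S^{-n}A)\le C|\hat\sigma(n)|^{2}$, and therefore needs the Cartesian power $k$ with $2k\delta>1$, Borel--Cantelli, and the endgame of \Cref{sec:Proof_c=d=1} to extract the final set. You instead exploit the exact linear identity $f\circ T^n+f\circ S^n=2\hat\sigma(n)f$ on the two-sided window $B_0=\{a<f<b\}$ to show the triple intersection is already null for every $n$ with $\hat\sigma(n)\le a/b$, i.e.\ for all but finitely many $n$; this uses only the Rajchman property, makes the polynomial decay, the exponent condition and the power $k$ superfluous (your argument works for $k=1$), and replaces Borel--Cantelli by a finite pigeonhole on the exceptional set followed by removal of a null set, which moreover handles the passage from ``measure zero'' to ``empty'' more explicitly than the maximal-$M$ argument the paper refers back to (there, maximality by itself only gives measure zero at the intermediate times, and a small null-set surgery like yours is needed anyway). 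In short: the paper's route stays parallel to the skew-product sections and only uses a crude one-sided estimate, while yours squeezes more out of the rigid relation between $f\circ T^n$ and $f\circ S^n$ and yields a slightly stronger and more self-contained conclusion; both are valid proofs of the stated theorem (read, as intended, for $n\neq 0$).
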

\begin{proof}
Let $\mathcal{H}$ be the $L^2$ closure of $\mathrm{span}\left\{f\circ T^n:n\in\Z\right\}$ where $f\in L^2(X,m)$ is a function whose times series is a Gaussian process with spectral measure $\sigma$ which generates $\mathcal{B}$. 

Let  $\mathcal{K}$ be the linear subspace in $\mathcal{H}$ generated by $f$ and denote by $P:\mathcal{H}\to \mathcal{H}$ the projection operator to $\mathcal{K}$. The linear operator $W$, defined by
\[
Wg:=P g-(I-P)g,
\]
is a unitary operator on $\mathcal{H}$ and $W=W^{-1}$. It follows that the operator $V=WUW:\mathcal{H}\to\mathcal{H}$ is a unitary operator which is unitarily equivalent to $U$, the Koopman operator of $T$. Let $S:X\to X$ be a measure preserving transformation, isomorphic to $T$, such that the Koopman operator of $S$ is $V$. With this we have defined the pair of transformations $T,S$. 

It remains to find the set $B$ where double recurrence fails for the pair $T^{\otimes k},S^{\otimes k}$. To that end, first consider the set $A:=\left\{f>1\right\}\in\mathcal{B}$. Since $Pf=f$ and $P^2=P$, for all $n\in \N$,
\begin{equation}\label{eq: Proj of U and V}
PV^nf=PU^nf,    
\end{equation}
and by the definition of $W$,
\[
(I-P)V^nf=-(I-P)U^nf.
\]
The latter implies that for each $n\in \Z$, either $(I-P)V^nf\leq 0$ or $(I-P)U^nf\leq 0$. Hence we deduce that,
\begin{align*}
T^{-n}A\cap S^{-n}A&=\left\{f\circ T^n>1,f\circ S^n>1\right\}\\
&=\left\{U^nf>1,V^nf>1\right\}\\
&\subset\left\{V^nf>1,(I-P)V^nf\leq 0\right\}\cup \left\{U^nf>1,(I-P)U^nf\leq 0\right\}\\
&\subset \left\{PV^nf>1,(I-P)V^nf\leq 0\right\}\cup \left\{PU^nf>1,(I-P)U^nf\leq 0\right\}.
\end{align*}

Taking into consideration \eqref{eq: Proj of U and V}, we conclude that 
\[
T^{-n}A\cap S^{-n}A\subset \left\{PU^nf>1\right\}.
\]

By Markov's inequality,
\begin{align*}
m\left(T^{-n}A\cap S^{-n}A\right)&\leq m\left(PU^nf>1\right)\\&\leq \|PU^n f\|_2^2\ \ \ \ \ \ \ \ \ \ \ \text{as}\ P\ \text{is a rank one projection to}\ \mathcal{K},\\
&=\left|\left<f,U^nf\right>\right|^2\cdot \|f\|_2^2\\
&=\left|\hat{\sigma}(n)\right|^2\cdot \sigma\left(S^1\right)\leq Cn^{-2\delta}. 
\end{align*}

This shows that writing $D:=A^{\otimes k}\in\B^{\otimes k}$,
\[
\sum_{n=1}^\infty m^{\otimes k}\left(D\cap \left(T^{\otimes k}\right)^{-n}D\cap \left(S^{\otimes k}\right)^{-n}D\right)\lesssim \sum_{n=1}^\infty n^{-2k\delta}<\infty.
\]
Arguing in a similar way as in \cref{sub: c=d=1}, one finds a set $B\subset D$, with $m^{\otimes k}(B)>0$, such that for all $n\in \N$,
\[
B\cap \left(T^{\otimes k}\right)^{-n}B\cap \left(S^{\otimes k}\right)^{-n}B=\emptyset.
\]
\end{proof}
Using this result we can now prove \cref{thm:recurrence_2}. 

\begin{proof}[Proof of \cref{thm:recurrence_2}]
Let $c,d\in\Z\setminus \{0\}$. Choose $\sigma$ a singular measure on the circle which has polynomial decay of order $\delta$ (cf. the measure in \cite{Kaufman_1980}) and $k\in\N$ such that $2k\delta>1$. By \cref{thm: Gaussian no rec}, there exist Gaussian automorphisms $P,Q$ of a probability space $\left(Y,\mathcal{C},m\right)$ with spectral measure $\sigma$ and a set $A\in\mathcal{C}^{\otimes k}$ such that $m^{\otimes k}(A)>0$ and for all $n\in\N$,
\[
A\cap \left(P^{\otimes k}\right)^{-n}A\cap \left(Q^{\otimes k}\right)^{-n}A = \emptyset. 
\] We set $(X,\B,\mu)$ to be the probability space $(Y^{\otimes k},\mathcal{C}^{\otimes k},m^{\otimes k})$. Since $\sigma$ is a singular Rajchman measure, both $P$ and $Q$ are mixing, zero entropy transformations \cite{MR191718,MR1231420}.  As every Gaussian automorphism has roots of all orders, there exists $L,R$ Gaussian automorphisms of $(Y,\B,m)$ such that $L^c=P$ and $R^d=Q$. It follows that $L$ and $R$ (and hence $L^{\otimes k}$ and $R^{\otimes k}$) are mixing, zero entropy transformations. It remains to note that for $A\in\mathcal{C}^{\otimes k}$ as above (with $m^{\otimes k}(A)>0$) and for all $n\in\N$,
\[
A\cap \left(L^{\otimes k}\right)^{-cn}A\cap \left(R^{\otimes k}\right)^{-dn}A=A\cap \left(P^{\otimes k}\right)^{-n}A\cap \left(Q^{\otimes k}\right)^{-n}A = \emptyset. 
\] Finally setting $T = L^{\otimes k}$ and $S = R^{\otimes k}$, we obtain the statement of the Theorem. 
\end{proof}

\appendix
\section{}\label{App_1}
The results on the failure of double recurrence for non-commuting transformations make use of the mixing property of skew product extensions. The following result (see \Cref{prop:WM}), the proof for which is standard and well known, is used in \cref{prop:TT_WM_0}. For the sake of completeness, in this appendix, we provide a proof for \Cref{prop:WM}. 

Given a probability-preserving system $(X,\B,\mu,T)$, a measurable function $f:X\to \mathbb{Z}^d$, for $d\geq 1$, we denote by $T_f:X\times \{0,1\}^{\Z^d}\to X\times \{0,1\}^{\Z^d}$, the skew product extension of $T$ by $f$ defined as following
\[
T_f(x,\omega)=(Tx,\sigma_{f(x)}\omega).
\]
Here $\sigma$ is the ($\Z^d$) shift action of $\{0,1\}^{\Z^d}$. For every $\eta$, a stationary product measure on $\{0,1\}^{\Z^d}$, $T_f$ preserves $\mu\times \eta$. We choose $\eta$ to be the product measure on $\{0,1\}^{\Z^d}$ with marginals $\frac{1}{2}\left(\delta_{0}+\delta_1\right)$. In order not to burden the notations, we set $m :=\mu\times \eta$. 
\begin{prop}\label{prop:WM}
Let $(X,\B,\mu,T)$ be a probability-preserving system and let $f:X\to \mathbb{Z}^d$ be a measurable function. Assume that for all $M>0$, 
\[
\lim_{n\to\infty}\mu\left(\left\|S_n(f)\right\|_\infty\leq M\right)=0.
\]
Then $T$ is a  weak mixing (respectively mixing) if and only if $T_f$ is a weak mixing (respectively mixing). 
\end{prop}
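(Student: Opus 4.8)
The plan is to work entirely on the level of Koopman operators and the spectral/mixing characterizations via functions. Since $T$ is a factor of $T_f$ (via the projection $\pi:X\times\{0,1\}^{\Z^d}\to X$), one direction is immediate: if $T_f$ is mixing (resp. weak mixing) then so is its factor $T$. So the real content is the converse, and the standing hypothesis $\mu(\|S_n(f)\|_\infty\le M)\to 0$ for every $M$ is exactly what is needed there. First I would set up a convenient dense subspace of $L^2(m)$ on which to test the mixing condition: $m = \mu\times\eta$, and $L^2(m)$ is spanned by functions of the form $F(x,\omega) = \phi(x)\chi_A(\omega)$ where $\phi\in L^2(\mu)$ and $\chi_A$ is a character of the compact group $\{0,1\}^{\Z^d}$ (with its product/Haar structure identified with the Bernoulli measure), i.e. $\chi_A(\omega) = \prod_{v\in A}(-1)^{\omega(v)}$ for a finite set $A\subset\Z^d$. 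It suffices to check the relevant correlation decay on such product functions, since finite linear combinations are dense.

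The key computation is that for $F = \phi\otimes\chi_A$ and $G = \psi\otimes\chi_B$,
\[
\int F\circ T_f^n\cdot \overline{G}\,dm = \int_X \phi\circ T^n(x)\,\overline{\psi(x)}\left(\int \chi_{A - S_n(f)(x)}(\omega)\,\overline{\chi_B(\omega)}\,d\eta(\omega)\right)d\mu(x),
\]
using $T_f^n(x,\omega) = (T^n x,\sigma_{S_n(f)(x)}\omega)$ and $\chi_A(\sigma_v\omega)=\chi_{A-v}(\omega)$. The inner integral is $1$ if $A - S_n(f)(x) = B$ and $0$ otherwise, by orthogonality of characters. Hence the correlation equals $\int_X \phi\circ T^n\cdot\overline\psi\cdot\mathbf 1_{\{S_n(f)(x) = A\ominus B\}}\,d\mu$, where $A\ominus B$ is the vector (if it exists) with $A = B + (A\ominus B)$; if $A$ and $B$ are not translates of one another this indicator is identically zero and the correlation vanishes for all $n$. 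The only nontrivial case is $B = A$, giving correlation $\int_X \phi\circ T^n\cdot\overline\psi\cdot\mathbf 1_{\{S_n(f) = 0\}}\,d\mu$ for $A = B\ne\emptyset$, and $\int_X \phi\circ T^n\cdot\overline\psi\,d\mu$ when $A = B = \emptyset$ (the latter is just the correlation in the base system $T$).

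Now the argument splits. For $A = B = \emptyset$ the correlation is a base correlation and decays to the product of the means by the mixing (resp. weak mixing) of $T$. For $A = B \ne \emptyset$, the means of $F$ and $G$ against $m$ are zero (a nontrivial character integrates to $0$), so I must show $\int_X \phi\circ T^n\cdot\overline\psi\cdot\mathbf 1_{\{S_n(f)=0\}}\,d\mu \to 0$. By Cauchy–Schwarz this is bounded by $\|\phi\|_2\,\big(\int |\psi|^2\mathbf 1_{\{S_n(f)=0\}}d\mu\big)^{1/2}$; approximating $\psi$ in $L^2$ by a bounded function reduces this to $\mu(S_n(f)=0)^{1/2}\to 0$, which follows from the hypothesis with $M=0$ (indeed $\mu(S_n(f)=0)\le\mu(\|S_n(f)\|_\infty\le 0)$... more precisely $\le \mu(\|S_n(f)\|_\infty\le M)$ for any $M\ge 0$). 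For the weak mixing statement, rather than pointwise decay one wants convergence to $0$ in density (or of Cesàro averages of $|\cdot|$); here one uses that $T$ weak mixing gives density-convergence of the base correlations, while the $\mathbf 1_{\{S_n(f)=0\}}$ term goes to $0$ in the ordinary sense and hence also in density. I expect the main (though still routine) obstacle to be the bookkeeping for the weak mixing case — making sure one uses the right equivalent formulation (e.g. $\frac1N\sum_{n<N}|\langle F\circ T_f^n,G\rangle - \langle F,1\rangle\langle 1,G\rangle|^2\to 0$, or equivalently that $T_f$ has no nonconstant eigenfunctions / that $T_f\times T_f$ is ergodic) and correctly combining the base contribution with the vanishing $\{S_n(f)=0\}$ contribution. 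A clean alternative for the weak mixing direction is to show $T_f$ has continuous spectrum on the orthocomplement of the constants: eigenfunctions of $T_f$ would, by a Fourier decomposition in the $\omega$-variable, have to live either in the base (forcing the eigenvalue to come from $T$, hence to be trivial if $T$ is weak mixing) or in a fiber character component, where the relation above together with $\mu(S_n(f)=0)\to 0$ precludes an eigenfunction; I would present the proof in this spectral form if the Cesàro bookkeeping becomes unwieldy.
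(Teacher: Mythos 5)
Your argument is correct and reaches the same conclusion as the paper, but by a different fiberwise computation. The paper tests mixing on rectangles $A_1\times B_1$ with $B_i$ finite unions of cylinders depending on coordinates in $[-M,M]^d$, and uses that once $\|S_n(f)(x)\|_\infty>2M$ the sets $B_1$ and $\sigma_{-S_n(f)(x)}B_2$ are \emph{independent} under $\eta$; it then splits the integral over $E_n=\{\|S_n(f)\|_\infty\le 2M\}$ and its complement, the first piece dying by hypothesis and the second converging by (weak) mixing of $T$. You instead expand the fiber in the Walsh character basis $\chi_A(\omega)=\prod_{v\in A}(-1)^{\omega(v)}$, where exact orthogonality collapses the fiber integral to an indicator $\mathbf 1_{\{S_n(f)=v_0\}}$, so the whole converse reduces to $\mu(S_n(f)=v_0)\to 0$ for each fixed $v_0$ plus (weak) mixing of the base on the component $A=B=\emptyset$; this is equivalent to the paper's hypothesis, since $\mu(\|S_n(f)\|_\infty\le M)$ is a finite sum of such terms. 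Your version buys a cleaner algebraic identity in the fiber (no $E_n$ splitting) at the cost of the $L^2$-approximation bookkeeping for general $\phi,\psi$ and of the Ces\`aro/density bookkeeping in the weak mixing case, which the paper handles exactly as you indicate (density-one convergence of base correlations plus absolute decay of the remaining term). Two small points to repair in a write-up: the relevant case is not only $B=A$ but $B$ any translate $A+v_0$ of $A$ (a nonzero translate gives the correlation $\int\phi\circ T^n\,\overline\psi\,\mathbf 1_{\{S_n(f)=v_0\}}\,d\mu$, handled identically via $\mu(S_n(f)=v_0)\le\mu(\|S_n(f)\|_\infty\le\|v_0\|_\infty)\to0$, and both means vanish since the characters are nontrivial), and with the paper's convention $(\sigma_v\omega)(u)=\omega(u+v)$ one has $\chi_A(\sigma_v\omega)=\chi_{A+v}(\omega)$ rather than $\chi_{A-v}$. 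Your alternative ``no eigenfunctions'' sketch for weak mixing is shakier than the Ces\`aro route, because the character components $L^2(\mu)\otimes\chi_A$ are not invariant under the Koopman operator of $T_f$ (the image of $\phi\otimes\chi_A$ involves $\chi_{A+f(x)}$ with $x$-dependent shift), so an eigenfunction need not ``live in'' a single component; stick with the quantitative argument.
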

\begin{proof}
The projection $\pi: X\times\{0,1\}^{\Z^d}\to X$ is a factor map from $T_f$ to $T$. As a factor of a weak mixing (respectively mixing) transformation is always weak mixing (respectively mixing), it follows that if $T_f$ is weak mixing (respectively mixing) then so is $T$. It remains to show the implications from $T$ to $T_f$. 

Assume first that $T$ is mixing. In order to show that $T_f$ is mixing it suffices to show that for every $A_1,A_2\in\B$ and $B_1,B_2\subset \{0,1\}^{\Z^d}$ a finite union of cylinder sets,
\[
\lim_{n\to\infty}m\left(A_1\times B_1\cap T_f^{-n}\left(A_2\times B_2\right)\right)=\left(\mu\left(A_1\right)\eta\left(B_1\right)\right)\left(\mu\left(A_2\right)\eta\left(B_2\right)\right). 
\]
Let $M\in\N$ be large enough so that $1_{B_1}$ and $1_{B_2}$ are functions of $\{0,1\}^{[-M,M]^d}$.  Note that for all $m\in\Z^d$ with $\|m\|_\infty>2M$, $m+[-M,M]^d$ is disjoint from $[-M,M]^d$ and thus $B_1$ and $\sigma_{-m}B_2$ are independent (under $\eta$). For $n\in\N$, write 
\[
E_n:=\left\{x\in X:\ \left\|S_n(f)(x)\right\|_\infty\leq 2M\right\}\in\B. 
\]
By Fubini's theorem,
\begin{align*}
m\left(A_1\times B_1\cap T_f^{-n}\left(A_2\times B_2\right)\right)&=\int 1_{A_1}1_{A_2}\circ T^n\left(\int 1_{B_1}1_{B_2}\circ \sigma_{S_n(f)}d\eta\right)d\mu\\
&=I(n)+II(n),
\end{align*}
where 
\begin{align*}
I(n)&= \int_{X\setminus E_n} 1_{A_1}1_{A_2}\circ T^n\left(\int 1_{B_1}1_{B_2}\circ \sigma_{S_n(f)}d\eta\right)d\mu,\ \  \text{and}\\
II(n)&= \int_{E_n} 1_{A_1}1_{A_2}\circ T^n\left(\int 1_{B_1}1_{B_2}\circ \sigma_{S_n(f)}d\eta\right)d\mu.
\end{align*}
By the assumption on $f$, $\lim_{n\to \infty}\mu\left(E_n\right)=0$. We deduce from this and
\[
0\leq II(n)\leq \mu\left(E_n\right),
\]
that $\lim_{n\to\infty}II(n)=0$. For all $x\in X\setminus{E_n}$, $\left\|S_n(f)(x)\right\|_\infty\geq 2M$ and
\[
\int 1_{B_1}1_{B_2}\circ \sigma_{S_n(f)(x)}d\eta=\eta\left(B_1\right)\eta\left(\sigma_{-S_n(f)(x)}B_2\right)=\eta\left(B_1\right)\eta\left(B_2\right). 
\]
Consequently, 
\begin{equation}\label{eq: T mixing to T_f mixing}
I(n)= \eta\left(B_1\right)\eta\left(B_2\right)\int_{X\setminus E_n} 1_{A_1}1_{A_2}\circ T^n d\mu. 
\end{equation}
In addition, 
\[
\mu\left(A_1\cap T^{-n}A_2\right)-\mu\left(E_n\right)\leq \int_{X\setminus E_n} 1_{A_1}1_{A_2}\circ T^n d\mu\leq \mu\left(A_1\cap T^{-n}A_2\right). 
\]
By the mixing property of $T$ and $\lim_{n\to\infty}\mu\left(E_n\right)=0$, we deduce that,
\[
\lim_{n\to \infty}\int_{X\setminus E_n} 1_{A_1}1_{A_2}\circ T^n d\mu=\mu\left(A_1\right)\mu\left(A_2\right),
\]
and as a consequence (recall \eqref{eq: T mixing to T_f mixing}),
\begin{align*}
\lim_{n\to\infty}m\left(A_1\times B_1\cap T_f^{-n}\left(A_2\times B_2\right)\right)&=\lim_{n\to\infty}\left(I(n)+II(n)\right) \\
&=\left(\mu\left(A_1\right)\eta\left(B_1\right)\right)\left(\mu\left(A_2\right)\eta\left(B_2\right)\right)\\
&=m\left(A_1\times B_1\right) m\left(A_2\times B_2\right)
\end{align*} as needed. This shows that $T_f$ is mixing for a dense collection of sets in $\B_{X\times\{0,1\}^{\Z^d}}$ (the Borel-$\sigma$ algebra on the product space $X\times\{0,1\}^{\Z^d}$). It follows from this that $T_f$ is mixing.

Now we assume $T$ is weak mixing and show that this implies the weak mixing property of $T_f$. As before let $A_1,A_2\in\B$ and $B_1,B_2\subset\{0,1\}^{\Z^d}$ two sets which are a finite union of cylinder sets. Since $T$ is weak mixing, there exists $J\subset \N$ of Banach density $1$ such that 
\[
\lim_{J\ni n\to\infty}\mu\left(A_1\cap T^{-n}A_2\right)=\mu\left(A_1\right)\mu\left(A_2\right). 
\]
Arguing as before (in the mixing $T$ case) we see that,
\[
\lim_{J\ni n\to\infty}m\left(\left(A_1\times B_1\right)\cap T_f^{-n}\left(A_2\times B_2\right)\right)=m\left(A_1\times B_1\right) m\left(A_2\times B_2\right). 
\]
Since $J$ is of Banach density $1$, 
\[
\lim_{N\to\infty}\frac{1}{N}\sum_{n=1}^N\left|m\left(\left(A_1\times B_1\right)\cap T_f^{-n}\left(A_2\times B_2\right)\right)-m\left(A_1\times B_1\right)m\left(A_2\times B_2\right)\right|=0.
\]
As before this shows that $T_f$ is weak mixing for a dense collection of sets in $\B_{X\times\{0,1\}^{\Z^d}}$. The weak mixing property of $T_f$ follows from this.

\end{proof}
\bibliographystyle{alpha}
\bibliography{lib}

\end{document}